\documentclass{article}

\usepackage[english]{babel}\usepackage[latin1]{inputenc}\usepackage[T1]{fontenc}
\usepackage[table,dvipsnames]{xcolor}
\usepackage{mathdots,graphicx,amsfonts,amssymb,amsmath,amsthm,mathrsfs,epstopdf,enumitem,multirow,booktabs,subcaption,mathtools,accents,pifont}
\usepackage[top=2cm,bottom=3cm,left=2cm,right=2cm]{geometry}
\usepackage{sectsty}\sectionfont{\normalsize}\subsectionfont{\normalsize}
\usepackage[labelfont=bf]{caption}

\newtheorem{theorem}{Theorem}[section]
\newtheorem{lemma}{Lemma}[section]
\newtheorem{corollary}{Corollary}[section]
\theoremstyle{definition}
\newtheorem{definition}{Definition}[section]
\newtheorem{remark}{Remark}[section]
\newtheorem{example}{Example}[section]

\numberwithin{equation}{section}
\numberwithin{figure}{section}
\numberwithin{table}{section}

\newcommand{\bu}{{\mathbf 1}}
\newcommand{\bz}{{\mathbf 0}}
\newcommand{\ee}{{\boldsymbol e}}
\newcommand{\ii}{{\boldsymbol i}}
\newcommand{\jj}{{\boldsymbol j}}
\newcommand{\hh}{{\boldsymbol h}}
\newcommand{\kk}{{\boldsymbol k}}
\newcommand{\nn}{{\boldsymbol n}}
\newcommand{\mm}{{\boldsymbol m}}

\newcommand{\xx}{{\boldsymbol x}}
\newcommand{\btheta}{\boldsymbol\theta}
\newcommand{\bnu}{\boldsymbol\nu}
\newcommand{\II}{{\mathcal I}}
\newcommand{\JJ}{{\mathcal J}}
\newcommand{\rie}{RI\hspace{-1.25pt}E}
\renewcommand{\epsilon}{\varepsilon}

\newcommand{\rd}{\color{red}}
\newcommand{\bl}{\color{blue}}

\begin{document}

\title{Analysis of Block Jacobi/Gauss--Seidel and additive/multiplicative Schwarz preconditioning through the theory of GLT sequences,\\ with applications to domain decomposition discretizations}
\author{Carlo Garoni\thanks{Corresponding author}\\
\footnotesize Department of Mathematics, University of Rome Tor Vergata, Rome, Italy (garoni@mat.uniroma2.it)\\[10pt]
Abdessadek Rifqui\\
\footnotesize Vanguard Center, University Mohammed VI Polytechnic, Rabat Morocco (abdessadek.rifqui@um6p.ma)\\[10pt]
Stefano Serra-Capizzano\\
\footnotesize Department of Science and High Technology, University of Insubria, Como, Italy (s.serracapizzano@uninsubria.it)\\[-2pt]
\footnotesize Department of Information Technology, Uppsala University, Uppsala, Sweden (stefano.serra@it.uu.se)}
\date{}

\maketitle

\begin{abstract}
When a linear differential problem is discretized by a linear numerical method characterized by a mesh fineness parameter $n$, the computation of the numerical solution reduces to solving a linear discrete problem identified by a matrix $A_n$ whose size grows with $n$. Regardless of the considered differential problem and numerical method, the sequence of discretization matrices $\{A_n\}_n$ often falls within the class of generalized locally Toeplitz (GLT) sequences. This happens even if the selected numerical method belongs to the family of domain decomposition methods (DDMs), as illustrated herein through examples.
Four widely used preconditioners for DDM discretization matrices are the block Jacobi (BJ), block Gauss--Seidel (BGS), additive Schwarz (AS), and multiplicative Schwarz (MS) preconditioners. In this paper, we provide formal definitions of the BJ/BGS/AS/MS preconditioners for arbitrary multilevel block matrices. These definitions and the associated notations are inspired by the theory of GLT sequences and are proposed as alternatives to those commonly used by the DDM community.
After formulating the necessary definitions, we analyze through the theory of GLT sequences the structure of the BJ/BGS/AS/MS preconditioners when applied to multilevel block matrices $A_n$ belonging to a GLT sequence $\{A_n\}_n$.
To explain our main results, we point out that any GLT sequence $\{A_n\}_n$ is uniquely associated with a special function $\kappa$ called symbol.
In this paper, we prove that, if $\{A_n\}_n$ is a GLT sequence with symbol $\kappa$ and if $P_n^{\hspace{1pt}\mbox{\scriptsize\ding{105}}}(A_n)$ is a $\mbox{\ding{105}}$ preconditioner for $A_n$ with $\mbox{\ding{105}}\in\{BJ,BGS,AS,MS\}$, then: (a)~$\{P_n^{\hspace{1pt}\mbox{\scriptsize\ding{105}}}(A_n)\}_n$ is a GLT sequence with symbol $\kappa$ for $\mbox{\ding{105}}\in\{BJ,BGS,MS\}$; (b)~$\{P_n^{AS}(A_n)\}_n$ is a GLT sequence with symbol $\kappa^{AS}\approx\kappa$, and $\kappa^{AS}=\kappa$ whenever the overlaps in the subdomains used for the construction of $P^{AS}(A_n)$ vanish as $n\to\infty$.
Results (a) and (b) give reasons to believe that the BJ/BGS/AS/MS preconditioners are very efficient for GLT sequences.
A numerical validation of this prediction in the context of isogeometric DDMs is presented.

\smallskip

\noindent{\em Keywords:} block Jacobi and Gauss--Seidel preconditioning, additive and multiplicative Schwarz preconditioning, generalized locally Toeplitz sequences, domain decomposition methods, spectral and singular value distribution

\smallskip

\noindent{\em 2010 MSC:} 65F08, 15B05, 65N55, 15A18
\end{abstract}

{\footnotesize\tableofcontents}

\section{Introduction}\label{sec:i}

Consider the discretization of a linear differential problem using a numerical method characterized by a mesh fineness parameter $n$.
In this case, the computation of the numerical solution reduces to solving a linear discrete problem (usually a linear system) identified by a matrix $A_n$.
The size of $A_n$ grows as $n$ increases, i.e., as the mesh is progressively refined, and ultimately we are left with a sequence of matrices $A_n$ whose size diverges to $\infty$ as $n\to\infty$.
What is often observed in practice is the following:
\begin{quote}
{\em Regardless of the considered differential problem and the numerical method used for its discretization, the sequence of discretization matrices $\{A_n\}_n$ possesses a sort of (possibly hidden) Toeplitz-like structure and, in particular, it falls within the class of generalized locally Toeplitz (GLT) sequences.}
\end{quote}
This statement alone justifies the interest in the theory of GLT sequences, which nowadays represents a quite extensive research area with numerous applications.
For readers who are new to the subject, we recommend the introduction \cite{GLT-intro} and the six-page conference paper \cite{aip}.
For a comprehensive exposition of the topic, we refer to the books \cite{GLTbookI,GLTbookII} and the book-like papers \cite{rg,bgR,bg,bgd}.

Domain decomposition methods (DDMs) are among the most powerful and widely used parallel numerical methods for the discretization of differential equations.
By combining ideas from variational analysis, numerical linear algebra and iterative methods, they exploit a suitable decomposition of the computational domain into smaller subdomains that can be treated in parallel.
The origin of DDMs can be traced back to Schwarz's rigorous proof of the Dirichlet principle in the context of Riemann's theory of analytic functions~\cite{Sch1870}. This seminal work introduced the classical Schwarz method, thus laying the foundation of modern DDM theory.
A key advancement that followed \cite{Sch1870} was the variational reformulation of Schwarz methods~\cite{Lio88I, Lio88II, Lio88III}, which enabled the development of parallel and optimized Schwarz algorithms.
Since then, DDMs have evolved into a broad family of methods, including additive and multiplicative Schwarz methods, Neumann--Neumann methods, finite element tearing and interconnecting (FETI) methods, balancing domain decomposition (BDD) methods, and optimized Schwarz methods. 
We refer the reader to~\cite{CM94,DDM-book-New,DDM-book} for a comprehensive exposition of DDMs and to \cite{Gan06} for more recent developments regarding the optimization of Schwarz methods.
In accordance with the statement highlighted above in italics, {\em the sequences of matrices $\{A_n\}_n$ arising from DDM discretizations often fall within the class of GLT sequences.} One of the purposes of the present paper is to support this claim through a number of examples; see Section~\ref{sec:DDMs-GLT}.

Four widely used preconditioners for DDM discretization matrices are the block Jacobi (BJ), block Gauss--Seidel (BGS), additive Schwarz (AS), and multiplicative Schwarz (MS) preconditioners. It is no coincidence that the names of the latter two preconditioners coincide with the names of two DDMs mentioned above.
In fact, all DDMs mentioned above can be seen as stationary iterative solvers for linear systems and, more importantly in large-scale computations, as preconditioners for Krylov methods. When applied to DDM discretization matrices, the BJ/BGS/AS/MS preconditioners naturally exploit the block structure induced by the domain decomposition to construct efficient approximations of the inverse operator.
While the BJ/AS preconditioners compute independent local corrections and are therefore well suited for parallel computing, the BGS/MS preconditioners apply these corrections successively, generally leading to stronger preconditioning at the expense of reduced parallelism.
It was soon discovered that the AS and MS preconditioners can be interpreted as, respectively, the BJ and BGS preconditioners with the addition of subdomain overlap. In particular, in the case of non-overlapping subdomains, the AS and MS preconditioners coincide with the BJ and BGS preconditioners, respectively.
Owing to their robustness, scalability and effectiveness in accelerating Krylov methods, the BJ/BGS/AS/MS preconditioners have become essential tools in scientific computing. Their mathematical foundations, convergence theory and practical implementation have been extensively investigated; see \cite{DDM-book-New,Gan08,DDM-book} and the references therein.

The present paper focuses on multilevel block matrices, which are the type of matrices that arise from the discretization of linear differential problems over rectangular domains; see Section~\ref{min} for details.
For this type of matrices, we provide formal definitions of the corresponding BJ/BGS/AS/MS preconditioners.
The definitions provided herein, as well as the associated notations, are formulated on an abstract level and apply to arbitrary multilevel block matrices, not necessarily arising from numerical discretizations. Such definitions and notations, which are inspired by the theory of reduced GLT sequences \cite{rg}, are proposed as alternatives to those commonly used by the DDM community. In order for the reader to become familiar with them, a number of illustrative examples is provided.
After formulating the necessary definitions, we analyze through the theory of GLT sequences the structure of the BJ/BGS/AS/MS preconditioners when applied to multilevel block matrices $A_n$ belonging to an arbitrary GLT sequence $\{A_n\}_n$. To explain our main results, we first point out that any GLT sequence $\{A_n\}_n$ is uniquely associated with a special function $\kappa$ called symbol. We write $\{A_n\}_n\sim_{\rm GLT}\kappa$ to indicate that $\{A_n\}_n$ is a GLT sequence with symbol $\kappa$. The importance of $\kappa$ lies in the fact that it always describes the asymptotic singular value distribution of $\{A_n\}_n$ and, in many cases of interest, it also describes the asymptotic spectral distribution of $\{A_n\}_n$; see Sections~\ref{sec:distr} and~\ref{sec:GLT} for details.
Let us denote by $P^{BJ}(A_n)$, $P^{BGS}(A_n)$, $P^{AS}(A_n)$, $P^{MS}(A_n)$, respectively, the BJ, BGS, AS, MS preconditioner for $A_n$.
We remark that all these preconditioners are constructed from a decomposition of the domain $[0,1]^d$ into a number of not necessarily disjoint subdomains whose union is equal to $[0,1]^d$, where $d$ is the number of levels in the multilevel structure of $A_n$; see Section~\ref{bJ/bGS/AS/MS-def-e} for details.
In our main results (Theorems~\ref{blockdiag(GLT)=GLT_blocktril(GLT)=GLT} and~\ref{AS(GLT)=GLT_MS(GLT)=GLT}), we show that:
\begin{itemize}[nolistsep,leftmargin=*]
	\item If $\{A_n\}_n\sim_{\rm GLT}\kappa$ then $\{P^{\hspace{1pt}\mbox{\scriptsize\ding{105}}}(A_n)\}_n\sim_{\rm GLT}\kappa$ for $\mbox{\ding{105}}\in\{BJ,BGS,MS\}$. In other words, the BJ/BGS/MS preconditioners, when applied to a matrix $A_n$ belonging to a GLT sequence with symbol $\kappa$, preserves the GLT structure of $A_n$ and also the symbol.
	\item If $\{A_n\}_n\sim_{\rm GLT}\kappa$ then $\{P^{AS}(A_n)\}_n\sim_{\rm GLT}\kappa^{AS}$, where $\kappa^{AS}$ is approximately equal to $\kappa$ and $\kappa^{AS}=\kappa$ if and only if the overlaps in the subdomains used for the construction of $P^{AS}(A_n)$ vanish as $n\to\infty$. In other words, the AS preconditioner, when applied to a matrix $A_n$ belonging to a GLT sequence with symbol $\kappa$, preserves the GLT structure of $A_n$ and approximately also the symbol; it exactly preserves the symbol if and only if the subdomain overlaps vanish as $n\to\infty$. 
\end{itemize}
From a practical point of view, Theorems~\ref{blockdiag(GLT)=GLT_blocktril(GLT)=GLT} and~\ref{AS(GLT)=GLT_MS(GLT)=GLT} give reasons to believe that the BJ/BGS/AS/MS preconditioners for matrices $A_n$ belonging to a GLT sequence are very efficient; see Remark~\ref{P-efficiency}. A numerical experiment in support of this claim has been included in Section~\ref{sec:app_discussion}.
We remark that, while proving Theorems~\ref{blockdiag(GLT)=GLT_blocktril(GLT)=GLT} and~\ref{AS(GLT)=GLT_MS(GLT)=GLT}, we also prove several auxiliary results, three of which are significant enough to be considered as further main results of this paper in addition to Theorems~\ref{blockdiag(GLT)=GLT_blocktril(GLT)=GLT} and~\ref{AS(GLT)=GLT_MS(GLT)=GLT}; we expressly refer to Theorems~\ref{rie-lemma}, \ref{S->JGS}, \ref{rie-thm}.

The paper is organized as follows.
In Section~\ref{min}, we introduce the multi-index notation used throughout the paper along with the notion of multilevel block matrices.
In Section~\ref{sec:overview}, we overview the theory of GLT sequences, with a focus on the results that we need in this paper.
In Section~\ref{sec:DDMs-GLT}, we illustrate through examples that sequences of matrices arising from DDM discretizations are often GLT sequences.
In Section~\ref{bJ/bGS/AS/MS-def-e}, we formally define the BJ/BGS/AS/MS preconditioners for multilevel block matrices and we illustrate the definitions through examples.
In Section~\ref{sec:prel}, we collect the necessary preliminaries to prove our main results.
In Section~\ref{sec:main}, we state and prove our main results described above (Theorems~\ref{blockdiag(GLT)=GLT_blocktril(GLT)=GLT} and~\ref{AS(GLT)=GLT_MS(GLT)=GLT}).
In Section~\ref{sec:app_discussion}, we discuss the applications of the main results in the context of GLT preconditioning; we also provide a numerical example from isogeometric DDMs that illustrates the effectiveness of the BJ/BGS/AS/MS preconditioners for GLT sequences.
In Section~\ref{sec:c}, we draw conclusions and suggest future lines of research.

\section{Multi-index notation and multilevel block matrices}\label{min}

A multi-index $\ii$ of length $d$, also called a $d$-index, is a vector in $\mathbb Z^d$; its components are denoted by $i_1,i_2,\ldots,i_d$.
$\bz$,~$\bu$,~$\mathbf2$ are the vectors of all zeros, all ones, all twos, respectively (their size will be clear from the context).
If $\mm$ is a $d$-index, we set $N(\mm)=m_1m_2\cdots m_d$.
A $d$-index $\mm$ is said to be positive if its components are positive, i.e., if $\mm\in\mathbb N^d$. 
If $\{\mm=\mm(n)\}_n$ is a sequence of positive $d$-indices, we say that $\mm\to\infty$ as $n\to\infty$ if $\min(\mm)\to\infty$ as $n\to\infty$.
If $\hh,\kk\in\mathbb R^d$, an inequality such as $\hh\le\kk$ means that $h_i\le k_i$ for all $i=1,\ldots,d$.
If $\hh,\kk$ are $d$-indices such that $\hh\le\kk$, the $d$-index range $\{\hh,\ldots,\kk\}$ is the set $\{\ii\in\mathbb Z^d:\hh\le\ii\le\kk\}$. We assume for this set the standard lexicographic ordering:
\[ \Bigl[\ \ldots\ \bigl[\ [\ (i_1,\ldots,i_d)\ ]_{i_d=h_d,\ldots,k_d}\ \bigr]_{i_{d-1}=h_{d-1},\ldots,k_{d-1}}\ \ldots\ \Bigr]_{i_1=h_1,\ldots,k_1}. \]
For instance, in the case $d=2$, the ordering is
\begin{align*}
&(h_1,h_2),\,(h_1,h_2+1),\,\ldots,\,(h_1,k_2),\,(h_1+1,h_2),\,(h_1+1,h_2+1),\,\ldots,\,(h_1+1,k_2),\\
&\ldots\,\ldots\,\ldots,\,(k_1,h_2),\,(k_1,h_2+1),\,\ldots,\,(k_1,k_2).
\end{align*}
When a $d$-index $\ii$ varies in a finite set $\II\subset\mathbb Z^d$ (this is simply written as $\ii\in\II$), it is understood that $\ii$ follows the lexicographic ordering. For instance, if $\xx=[x_\ii]_{\ii\in\II}$, then $\xx$ is a vector of size $|\II|$ whose components are indexed by a $d$-index $\ii$ varying in $\II$ according to the lexicographic ordering. Similarly, if $X=[x_{\ii\jj}]_{\ii,\jj\in\II}=[x_{\ii\jj}]_{\ii\in\II}^{\jj\in\II}$, then $X$ is a square matrix of size $|\II|$ whose components are indexed by a pair of $d$-indices $\ii,\jj$, both varying in $\II$ according to the lexicographic ordering.
When $\II$ is a $d$-index range $\{\hh,\ldots,\kk\}$, the notation $\ii\in\II$ is often replaced by $\ii=\hh,\ldots,\kk$, and the notations $\xx=[\xx_\ii]_{\ii\in\II}$ and $X=[x_{\ii\jj}]_{\ii,\jj\in\II}=[x_{\ii\jj}]_{\ii\in\II}^{\jj\in\II}$ are often replaced by $\xx=[x_\ii]_{\ii=\hh}^\kk$ and $X=[x_{\ii\jj}]_{\ii,\jj=\hh}^\kk=[x_{\ii\jj}]_{\ii=\hh,\ldots,\kk}^{\jj=\hh,\ldots,\kk}$, respectively.
Operations involving vectors in $\mathbb R^d$ that have no meaning in the vector space $\mathbb R^d$ must always be interpreted in the componentwise sense.
For instance, if $\ii,\jj,\hh\in\mathbb R^d$, then $\jj\hh=(j_1h_1,\ldots,j_dh_d)$, $\ii/\jj=(i_1/j_1,\ldots,i_d/j_d)$, etc.
For all $d$-indices $\ii,\jj$, we define $\delta_{\ii\jj}=1$ if $\ii=\jj$ and $\delta_{\ii\jj}=0$ otherwise.
If $\nn$ is a positive $d$-index, we denote by $\ee_\ii^{(\nn)}$, $\ii=\bu,\ldots,\nn$, the (column) vectors of the canonical basis of $\mathbb C^{N(\nn)}$. In other words, for every $\ii=\bu,\ldots,\nn$, $\ee_\ii^{(\nn)}$ is the vector of $\mathbb C^{N(\nn)}$ having $1$ in position $\ii$ and $0$ elsewhere.
We provide below a few examples to help the reader become familiar with the multi-index notation.

\begin{example}\label{mi.e1}
Let $\hh=\bu=(1,1)$ and $\kk=(4,2)$. The multi-index range $\hh,\ldots,\kk$ consists of $N(\kk-\hh+\bu)=N(\kk)=8$ elements, which are sorted according to the lexicographic ordering as follows:
\[ (1,1),\,(1,2),\;(2,1),\,(2,2),\;(3,1),\,(3,2),\;(4,1),\,(4,2). \]
\end{example}

\begin{example}\label{mi.e2}
Let $a:[0,1]^2\to\mathbb C$ and $\nn\in\mathbb N^2$. Set
$\xx=\bigl[a\bigl(\frac\ii\nn\bigr)\bigr]_{\ii=\bu}^\nn$.
Then, $\xx$ is a vector of size $N(\nn)$ given by
\[ \xx=\left[\begin{array}{c}
\vphantom{\int_{\sum_0^0}}a(\frac1{n_1},\frac1{n_2})\\
\vphantom{\int^{\sum_0^0}}a(\frac1{n_1},\frac2{n_2})\\
\vdots\\
\vphantom{\int_{\sum_0^0}^{\sum_0^0}}a(\frac1{n_1},1)\\
\hline
\vphantom{\int_{\sum_0^0}^{\sum_0^0}}a(\frac2{n_1},\frac1{n_2})\\
\vphantom{\int^{\sum_0^0}}a(\frac2{n_1},\frac2{n_2})\\
\vdots\\
\vphantom{\int_{\sum_0^0}^{\sum_0^0}}a(\frac2{n_1},1)\\
\hline
\vdots\\
\vdots\\
\vdots\\
\hline
\vphantom{\int_{\sum_0^0}^{\sum_0^0}}a(1,\frac1{n_2})\\
\vphantom{\int^{\sum_0^0}}a(1,\frac2{n_2})\\
\vdots\\
\vphantom{\int^{\sum_0^0}}a(1,1)
\end{array}\right]=\left[\begin{array}{c}
\xx_1\\
\hline
\xx_2\\
\hline
\vdots\\
\hline
\xx_{n_1}
\end{array}\right],\qquad\xx_{i_1}=\left[\begin{array}{c}
\vphantom{\int_{\sum_0^0}}a(\frac{i_1\vphantom{i_{\sum}}}{n_1},\frac1{n_2})\\
\vphantom{\int_{\sum_0^0}^{\sum_0^0}}a(\frac{i_1\vphantom{i_{\sum}}}{n_1},\frac2{n_2})\\
\vphantom{\int_{\sum_0^0}}\vdots\\
\vphantom{\int^{\sum_0^0}}a(\frac{i_1\vphantom{i_{\sum}}}{n_1},1)
\end{array}\right],\qquad i_1=1,\ldots,n_1. \]
Note that $\xx$ is partitioned into $n_1$ subvectors $\xx_1,\ldots,\xx_{n_1}$ of size $n_2$. A vector of this kind is referred to as a 2-level vector with level orders $n_1,n_2$.
\end{example}

\begin{example}\label{mi.e3}
Consider the matrix
\begin{equation}\label{A.m}
A=\left[\begin{array}{cc|cc}
4 & 4 & 0 & 0\\
4 & 4 & 0 & 0\\
\hline
0 & 0 & 1 & 1^{\vphantom{\int_0^1}}\\
0 & 0 & 1 & 1
\end{array}\right].
\end{equation}
Instead of indexing the entries of $A$ in the standard way, i.e., by means of two scalar indices $i,j=1,\ldots,4$, we can decide to index the entries of $A$ by means of two 2-indices $\ii,\jj=\bu,\ldots,\mathbf2=(1,1),(1,2),(2,1),(2,2)$. This is possible because both the ranges $1,\ldots,4$ and $\bu,\ldots,\mathbf2$ have 4 elements, and 4 is the size of $A$. The two writings $A=[a_{ij}]_{i,j=1}^4$ and $A=[a_{\ii\jj}]_{\ii,\jj=\bu}^{\mathbf2}$ correspond to the two different indexings. We have
\begin{align*}
a_{(1,1),(1,1)}&=4, &a_{(1,1),(1,2)}&=4, &a_{(1,1),(2,1)}&=0, &a_{(1,1),(2,2)}&=0,\\
a_{(1,2),(1,1)}&=4, &a_{(1,2),(1,2)}&=4, &a_{(1,2),(2,1)}&=0, &a_{(1,2),(2,2)}&=0,\\
a_{(2,1),(1,1)}&=0, &a_{(2,1),(1,2)}&=0, &a_{(2,1),(2,1)}&=1, &a_{(2,1),(2,2)}&=1,\\
a_{(2,2),(1,1)}&=0, &a_{(2,2),(1,2)}&=0, &a_{(2,2),(2,1)}&=1, &a_{(2,2),(2,2)}&=1.
\end{align*}
Indexing the entries of $A$ with the 2-indices $\ii,\jj$ reflects the fact that we are thinking of $A$ as a $2\times 2$ block matrix in which each of the 4 blocks is a $2\times 2$ matrix, as indicated in \eqref{A.m}: for all $\ii,\jj=\bu,\ldots,\mathbf2$, the entry $a_{\ii\jj}$ is the $(i_2,j_2)$ entry of the $(i_1,j_1)$ block of $A$. For example,
\[ a_{(2,1),(2,2)}={\rm entry\ }(1,2){\rm\ in\ the\ }(2,2){\rm\ block\ of\ }A=1. \]
We can therefore write
\[ a_{\ii\jj}=\begin{cases}
4, &\mbox{if $i_1=j_1=1$},\\
0, &\mbox{if $i_1=1$ and $j_1=2$},\\
0, &\mbox{if $i_1=2$ and $j_1=1$},\\
1, &\mbox{if $i_1=j_1=2$}.
\end{cases} \]
Note that $a_{\ii\jj}$ depends only on the first components of the 2-indices $\ii$ and $\jj$.
\end{example}

In the next example, we introduce tensor (Kronecker) products of matrices.
The indexing of tensor products is one of the main motivations for the multi-index notation.

\begin{example}[\textbf{tensor products}]\label{mi.e4}
If $X\in\mathbb C^{m_1\times m_2}$ and $Y\in\mathbb C^{\ell_1\times\ell_2}$, the tensor (Kronecker) product of $X$ and $Y$ is the $m_1\ell_1\times m_2\ell_2$ matrix defined by
\begin{equation}\label{XoY}
X\otimes Y=[x_{ij}Y]_{i=1,\ldots,m_1}^{j=1,\ldots,m_2}=\begin{bmatrix}x_{11}Y & x_{12}Y & \cdots & x_{1m_2}Y\\ x_{21}Y & x_{22}Y & \cdots & x_{2m_2}Y\\ \vdots & \vdots & & \vdots\\ x_{m_11}Y & x_{m_12}Y & \cdots & x_{m_1m_2}Y\end{bmatrix}.
\end{equation}
Using the identities
\[ p=\lfloor p/q\rfloor q+p\,{\rm mod}\,q,\qquad\lfloor p/q\rfloor=\lceil(p+1)/q\rceil-1, \]
which are satisfied for all integers $p\ge0$ and $q\ge1$, for every $i=1,\ldots,m_1\ell_1$ and $j=1,\ldots,m_2\ell_2$ we can write 
\begin{align*}
i&=(\lceil i/\ell_1\rceil-1)\ell_1+((i-1)\,{\rm mod}\,\ell_1)+1,\\
j&=(\lceil j/\ell_2\rceil-1)\ell_2+((j-1)\,{\rm mod}\,\ell_2)+1.
\end{align*}
Thus, for every $i=1,\ldots,m_1\ell_1$ and $j=1,\ldots,m_2\ell_2$, the $(i,j)$ entry of $X\otimes Y$ is given by
\[ (X\otimes Y)_{ij}=x_{\lceil i/\ell_1\rceil,\lceil j/\ell_2\rceil}\,y_{((i-1)\,{\rm mod}\,\ell_1)+1,((j-1)\,{\rm mod}\,\ell_2)+1}. \]
It is clear that this expression is complicated. Now, suppose we decide to index the entries of $X\otimes Y$ by two 2-indices $\ii,\jj$ such that $\ii=\bu,\ldots,\nn$ and $\jj=\bu,\ldots,\kk$, where $\nn=(m_1,\ell_1)$ and $\kk=(m_2,\ell_2)$. This indexing, which is possible because
\begin{align*}
|\{\bu,\ldots,\nn\}|&=N(\nn)=m_1\ell_1={\rm number\ of\ rows\ of\ }X\otimes Y,\\
|\{\bu,\ldots,\kk\}|&=N(\kk)=m_2\ell_2={\rm number\ of\ columns\ of\ }X\otimes Y,
\end{align*}
reflects the fact that we are thinking of $X\otimes Y$ as an $m_1\times m_2$ block matrix in which each of the $m_1m_2$ blocks is a matrix of size $\ell_1\times\ell_2$. Actually, this is the natural way of thinking in view of the structure of $X\otimes Y$; see \eqref{XoY}.
With such an indexing, for every $\ii=\bu,\ldots,\nn$ and $\jj=\bu,\ldots,\kk$, the $(\ii,\jj)$ entry of $X\otimes Y$ is given by
\[ (X\otimes Y)_{\ii\jj}={\rm entry\ }(i_2,j_2){\rm\ in\ the\ }(i_1,j_1){\rm\ block\ of}\ X\otimes Y=x_{i_1j_1}y_{i_2j_2}. \]
We then see that $(X\otimes Y)_{\ii\jj}$ has a much simpler expression than $(X\otimes Y)_{ij}$. In conclusion, by using 2-indices instead of scalar indices in the indexing of $X\otimes Y$, the complicated componentwise expression
\[ X\otimes Y=[(X\otimes Y)_{ij}]_{i=1,\ldots,m_1\ell_1}^{j=1,\ldots,m_2\ell_2}=[x_{\lceil i/\ell_1\rceil,\lceil j/\ell_2\rceil}\,y_{((i-1)\,{\rm mod}\,\ell_1)+1,((j-1)\,{\rm mod}\,\ell_2)+1}]_{i=1,\ldots,m_1\ell_1}^{j=1,\ldots,m_2\ell_2} \]
simplifies to
\[ X\otimes Y=[(X\otimes Y)_{\ii\jj}]_{\ii=\bu,\ldots,\nn}^{\jj=\bu,\ldots,\kk}=[x_{i_1j_1}y_{i_2j_2}]_{\ii=\bu,\ldots,\nn}^{\jj=\bu,\ldots,\kk}. \]
\end{example}

Throughout this paper, we assume in the reader a certain familiarity with tensor products defined in Example~\ref{mi.e4}.
Actually, an in-depth knowledge is not required: all properties of tensor products that we need herein can be found, e.g., in \cite[Section~2.4.5, pp.~40--41]{GLTbookI} or \cite[Section~2.5, pp.~18--20]{GLTbookII}.

In the next example, we introduce multilevel block matrices. The indexing of multilevel block matrices is probably the most important motivation for the multi-index notation.

\begin{example}[\textbf{multilevel block matrices}]\label{mi.e4'}
In many cases, it is convenient to partition matrices into blocks, which are partitioned into smaller blocks, which are partitioned into smaller blocks, and so on until a certain nesting level $d$ is reached. Such matrices are called multilevel block matrices. More precisely, following Tyrtyshnikov \cite[Section~6]{Ty96}, we say that a matrix $A$ is a $d$-level $(s\times t)$-block matrix with level orders $n_1,n_2,\ldots,n_d$ if:
\begin{itemize}[nolistsep,leftmargin=*]
	\item $A$ is thought of as an $(n_1n_2\cdots n_d)\times(n_1n_2\cdots n_d)$ block matrix whose ``entries'' are $s\times t$ blocks. The size of $A$ is therefore $N(\nn)s\times N(\nn)t$ with $\nn=(n_1,n_2,\ldots,n_d)$.
	\item $A$ is partitioned into $n_1^2$ blocks of size $\frac{N(\nn)}{n_1}s\times\frac{N(\nn)}{n_1}t$, each of which is partitioned into $n_2^2$ blocks of size $\frac{N(\nn)}{n_1n_2}s\times\frac{N(\nn)}{n_1n_2}t$, each of which is partitioned into $n_3^2$ blocks of size $\frac{N(\nn)}{n_1n_2n_3}s\times\frac{N(\nn)}{n_1n_2n_3}t$, and so on until the last $n_d^2$ blocks of size $\frac{N(\nn)}{n_1n_2\cdots n_d}s\times\frac{N(\nn)}{n_1n_2\cdots n_d}t=s\times t$, which are the (block) entries of $A$. In formulas,
	\begin{alignat*}{5}
	A&=[A_{i_1j_1}]_{i_1,j_1=1}^{n_1}, &\qquad A_{i_1j_1}&\in\mathbb C^{\tilde n_1s\times\tilde n_1t}, &\qquad\tilde n_1&=\frac{N(\nn)}{n_1};\\[3pt]
	A_{i_1j_1}&=[A_{i_1j_1;\,i_2j_2}]_{i_2,j_2=1}^{n_2}, &\qquad A_{i_1j_1;\,i_2j_2}&\in\mathbb C^{\tilde n_2s\times\tilde n_2t}, &\qquad\tilde n_2&=\frac{N(\nn)}{n_1n_2};\\
	\vdots\\
	A_{i_1j_1;\ldots;i_{d-1}j_{d-1}}&=[A_{i_1j_1;\ldots;i_dj_d}]_{i_d,j_d=1}^{n_d}, &\qquad A_{i_1j_1;\ldots;i_dj_d}&\in\mathbb C^{\tilde n_ds\times\tilde n_dt}, &\qquad\tilde n_d&=\frac{N(\nn)}{n_1n_2\cdots n_d}=1.
	\end{alignat*}
\end{itemize}
Indexing the (block) entries of a $d$-level matrix $A$ by two traditional scalar indices $i,j=1,\ldots,N(\nn)$ is a nightmare. On the contrary, $A$ admits a natural indexing  by means of two $d$-indices $\ii,\jj=\bu,\ldots,\nn$.
Indeed, we have
\[ A=[A_{\ii\jj}]_{\ii,\jj=\bu}^\nn, \]
where $A_{\ii\jj}=A_{i_1j_1;\ldots;i_dj_d}$ for $\ii,\jj=\bu,\ldots,\nn$.
\end{example}

As a byproduct of Example~\ref{mi.e4'}, we provide below the formal definition of multilevel block matrices. 

\begin{definition}[\textbf{multilevel block matrix}]\label{mbms}
A multilevel block matrix is a matrix of the form
\begin{equation}\label{A:mbm}
A=[A_{\ii\jj}]_{\ii,\jj=\bu}^\nn\in\mathbb C^{N(\nn)s\times N(\nn)t},
\end{equation}
where $\nn\in\mathbb N^d$ for some positive integer $d$ and $A_{\ii\jj}\in\mathbb C^{s\times t}$ for some positive integers $s,t$ and for every $\ii,\jj=\bu,\ldots,\nn$.
The components $n_1,\ldots,n_d$ of the multi-index $\nn$ are referred to as the level orders of the matrix $A$ in \eqref{A:mbm}.
Note that each ``entry'' of $A$ is an $s\times t$ block, each ``row'' of $A$ is a block row (consisting of $s$ ``normal'' rows), and each column of $A$ is a block column (consisting of $t$ ``normal'' columns).
The matrix $A$ is also called a $d$-level $(s\times t)$-block matrix if we want to specify the number of levels $d$ and the size $s\times t$ of each block.
In the case where $s=t=1$, i.e., the block entries of $A$ are scalars, the matrix $A$ is simply called a $d$-level matrix.
\end{definition}

One of the most important examples of multilevel block matrices is given by tensor products.

\begin{example}[\textbf{tensor products}]\label{mi.e4-bis}
Let $X_1\in\mathbb C^{n_1\times n_1}$ and $X_2\in\mathbb C^{n_2\times n_2}$. We have seen in Example~\ref{mi.e4} that $X_1\otimes X_2$ admits the following natural indexing by two 2-indices $\ii,\jj=\bu,\ldots,\nn$ with $\nn=(n_1,n_2)$:
\[ X_1\otimes X_2=[(X_1)_{i_1j_1}(X_2)_{i_2j_2}]_{\ii,\jj=\bu}^\nn. \]
Thus, $X_1\otimes X_2$ is a 2-level matrix with level orders $n_1,n_2$.
\end{example}

The importance of multilevel block matrices lies in the following remark.

\begin{remark}\label{mbm-rectangular}
When a linear differential problem defined over a $d$-dimensional rectangular domain is discretized by means of a standard numerical method (such as finite differences, finite elements, isogeometric analysis, etc.), the computation of the numerical solution reduces to solving a linear discrete problem (usually a linear system). If we denote by $A$ the corresponding matrix, then $A$ is normally a $d$-level $(s\times t)$-block matrix with level orders $n_1,\ldots,n_d$, where $n_i$ is deeply connected with the discretization parameter in the $i$th direction and the size $s\times t$ of each block entry depends on the used numerical method (usually $s=t$). For example, in finite difference and isogeometric analysis discretizations we have $s=t=1$ \cite[Chapter~7]{GLTbookII}, while in higher-order finite element discretizations we have $s=t=(p-k)^d$, where $p$ and $k$ are, respectively, the degree and the smoothness of the used polynomial basis functions \cite[Section~6]{bgd}.
\end{remark}

We conclude this section with the definition of multi-indices associated with a set.

\begin{definition}[\textbf{multi-indices associated with a set}]
Let $\nn\in\mathbb N^d$ and let $\Omega\subseteq\mathbb R^d$.
The set of multi-indices $\ii\in\{\bu,\ldots,\nn\}$ associated with $\Omega$ is denoted by $\II_\nn^\Omega$ and is defined as follows:
\[ \II_\nn^\Omega=\biggl\{\ii\in\{\bu,\ldots,\nn\}:\,\frac\ii\nn\in\Omega\biggr\}=\biggl\{\frac\ii\nn:\,\ii=\bu,\ldots,\nn\biggr\}\cap\Omega. \]
The cardinality of $\II_\nn^\Omega$ is denoted by $N_\nn^\Omega$:
\[ N_\nn^\Omega=|\II_\nn^\Omega|. \]
Note that, if $\Omega\supseteq(0,1]^d$, then $\II_\nn^\Omega=\{\bu,\ldots,\nn\}$ and $N_\nn^\Omega=N(\nn)$ for every $\nn\in\mathbb N^d$.
\end{definition}

\begin{example}
let $a,b:[0,1]^2\to\mathbb C$ and let $\nn=(3,2)$. The multi-index range $\bu,\ldots,\nn$ consists of the multi-indices
\[ \bu,\ldots,\nn=(1,1),(1,2),(2,1),(2,2),(3,1),(3,2). \]
Consider the 2-level matrix
\begin{align}\label{sg}
A&=\bigl[a_{\ii\jj}\bigr]_{\ii,\jj=\bu}^\nn=\Bigl[a\Bigl(\frac\ii\nn\Bigr)b\Bigl(\frac\jj\nn\Bigr)\Bigr]_{\ii,\jj=\bu}^\nn\notag\\
&=\left[\begin{array}{cc|cc|cc}
\cellcolor{green!10}a(\frac13,\frac12)b(\frac13,\frac12) & \cellcolor{green!10}a(\frac13,\frac12)b(\frac13,1) & a(\frac13,\frac12)b(\frac23,\frac12) & \cellcolor{green!10}a(\frac13,\frac12)b(\frac23,1) & a(\frac13,\frac12)b(1,\frac12) & \cellcolor{green!10}a(\frac13,\frac12)b(1,1)\vphantom{\Big|}\\
a(\frac13,1)b(\frac13,\frac12) & a(\frac13,1)b(\frac13,1) & a(\frac13,1)b(\frac23,\frac12) & a(\frac13,1)b(\frac23,1) & a(\frac13,1)b(1,\frac12) & a(\frac13,1)b(1,1)\vphantom{\Big|}\\
\hline
a(\frac23,\frac12)b(\frac13,\frac12) & a(\frac23,\frac12)b(\frac13,1) & a(\frac23,\frac12)b(\frac23,\frac12) & a(\frac23,\frac12)b(\frac23,1) & a(\frac23,\frac12)b(1,\frac12) & a(\frac23,\frac12)b(1,1)\vphantom{\Big|}\\
\cellcolor{green!10}a(\frac23,1)b(\frac13,\frac12) & \cellcolor{green!10}a(\frac23,1)b(\frac13,1) & a(\frac23,1)b(\frac23,\frac12) & \cellcolor{green!10}a(\frac23,1)b(\frac23,1) & a(\frac23,1)b(1,\frac12) & \cellcolor{green!10}a(\frac23,1)b(1,1)\vphantom{\Big|}\\
\hline
a(1,\frac12)b(\frac13,\frac12) & a(1,\frac12)b(\frac13,1) & a(1,\frac12)b(\frac23,\frac12) & a(1,\frac12)b(\frac23,1) & a(1,\frac12)b(1,\frac12) & a(1,\frac12)b(1,1)\vphantom{\Big|}\\
\cellcolor{green!10}a(1,1)b(\frac13,\frac12) & \cellcolor{green!10}a(1,1)b(\frac13,1) & a(1,1)b(\frac23,\frac12) & \cellcolor{green!10}a(1,1)b(\frac23,1) & a(1,1)b(1,\frac12) & \cellcolor{green!10}a(1,1)b(1,1)\vphantom{\Big|}\\
\end{array}\right].
\end{align}
Let $\Omega$ and $\Omega'$ be subsets of $\mathbb R^2$ such that $\II_\nn^\Omega=\{(1,1),(2,2),(3,2)\}$ and $\II_\nn^{\Omega'}=\{(1,1),(1,2),(2,2),(3,2)\}$. Then, the submatrix of $A$ given by
\[ B=\bigl[a_{\ii\jj}\bigr]_{\ii\in\II_\nn^\Omega}^{\jj\in\II_\nn^{\Omega'}}=\Bigl[a\Bigl(\frac\ii\nn\Bigr)b\Bigl(\frac\jj\nn\Bigr)\Bigr]_{\ii\in\II_\nn^\Omega}^{\jj\in\II_\nn^{\Omega'}} \]
is the submatrix shaded in green in \eqref{sg}.
\end{example}

\section{Overview of the theory of GLT sequences}\label{sec:overview}

In this section, we overview the theory of GLT sequences.
For conciseness purposes, we only present the results that we need in this paper.
For a comprehensive exposition of the topic, see \cite{rg,bgR,bg,bgd,GLTbookI,GLTbookII}.
For an introduction to the subject, we recommend \cite{GLT-intro,aip}.
Throughout this paper, $\mathbb N=\{1,2,3,\ldots\}$ and the cardinality of a set $E$ is denoted by~$|E|$.

\subsection{Singular value and spectral distribution of a sequence of matrices}\label{sec:distr}

Let $\mu_k$ be the Lebesgue measure in $\mathbb R^k$. Throughout this paper, all terminology from measure theory (such as ``measurable set'', ``measurable function'', ``a.e.'', etc.)\ always refers to the Lebesgue measure. A matrix-valued function $f:D\subseteq\mathbb R^k\to\mathbb C^{s\times t}$ is said to be measurable (respectively, continuous, continuous a.e., in $L^1(D)$, etc.)\ if its components $f_{ij}:D\to\mathbb C$, $i=1,\ldots,s$, $j=1,\ldots,t$, are measurable (respectively, continuous, continuous a.e., in $L^1(D)$, etc.). 
We denote by $C_c(\mathbb C)$ (respectively, $C_c(\mathbb R)$) the space of complex-valued functions defined on $\mathbb C$ (respectively, $\mathbb R$) with bounded support.
For every $x,y\in\mathbb R$, we define $x\wedge y=\min(x,y)$ and $x\vee y=\max(x,y)$.
The singular values of a matrix $A\in\mathbb C^{m\times n}$ are denoted by $\sigma_i(A)$, $i=1,\ldots,m\wedge n$, and the eigenvalues of a matrix $A\in\mathbb C^{m\times m}$ are denoted by $\lambda_i(A)$, $i=1,\ldots,m$. 
A sequence of matrices is, by definition, a sequence of the form $\{A_n\}_n$, where $n$ varies in some infinite subset of $\mathbb N$ and $A_n$ is a matrix of size $d_n\times e_n$ such that both $d_n$ and $e_n$ tend to $\infty$ as $n\to\infty$.

\begin{definition}[\textbf{singular value and spectral distribution of a sequence of matrices}]\label{def-distribution}\hfill
\begin{itemize}[leftmargin=*,nolistsep]
	\item Let $\{A_n\}_n$ be a sequence of matrices with $A_n$ of size $d_n\times e_n$, and let $f:D\subset\mathbb R^k\to\mathbb C^{s\times t}$ be measurable with $0<\mu_k(D)<\infty$.
	We say that $\{A_n\}_n$ has a (asymptotic) singular value distribution described by $f$, and we write $\{A_n\}_n\sim_\sigma f$, if
	\begin{equation*}
	\lim_{n\rightarrow\infty}\frac1{d_n\wedge e_n}\sum_{i=1}^{d_n\wedge e_n}F(\sigma_i(A_n))=\frac1{\mu_k(D)}\int_D\frac{\sum_{i=1}^{s\wedge t}F(\sigma_i(f(\xx)))}{s\wedge t}{\rm d}\xx,\qquad\forall\,F\in C_c(\mathbb R).
	\end{equation*}
	\item Let $\{A_n\}_n$ be a sequence of square matrices with $A_n$ of size $d_n$, and let $f:D\subset\mathbb R^k\to\mathbb C^{s\times s}$ be measurable with $0<\mu_k(D)<\infty$.
	We say that $\{A_n\}_n$ has a (asymptotic) spectral (or eigenvalue) distribution described by $f$, and we write $\{A_n\}_n\sim_\lambda f$, if
	\begin{equation*}
	\lim_{n\rightarrow\infty}\frac1{d_n}\sum_{i=1}^{d_n}F(\lambda_i(A_n))=\frac1{\mu_k(D)}\int_D\frac{\sum_{i=1}^sF(\lambda_i(f(\xx)))}{s}{\rm d}\xx,\qquad\forall\,F\in C_c(\mathbb C).
	\end{equation*}
\end{itemize}
\end{definition}

We remark that the functions $\xx\mapsto\sum_{i=1}^{s\wedge t}F(\sigma_i(f(\xx)))$ and $\xx\mapsto\sum_{i=1}^{s}F(\lambda_i(f(\xx)))$ appearing in Definition~\ref{def-distribution} are well-defined and measurable by \cite[Lemma~2.1]{bg}. We refer the reader to \cite[Remarks~4.1--4.2]{blocking} for the informal meaning behind the singular value and spectral distribution of a sequence of matrices.

\subsection{Special sequences of matrices}\label{sec:ssm}

We introduce in this section a few sequences of matrices that play an important role in the theory of GLT sequences.

\subsubsection{Zero-distributed sequences}
A zero-distributed sequence is a sequence of matrices $\{Z_n\}_n$ such that $\{Z_n\}_n\sim_\sigma0$, i.e.,
\[ \lim_{n\to\infty}\frac1{d_n\wedge e_n}\sum_{i=1}^{d_n\wedge e_n}F(\sigma_i(Z_n))=F(0),\qquad\forall\,F\in C_c(\mathbb R), \]
where $d_n\times e_n$ is the size of $Z_n$.

\subsubsection{Sequences of diagonal sampling matrices}
If $\nn\in\mathbb N^d$ and $a:[0,1]^d\to\mathbb C^{s\times t}$, the $\nn$th (multilevel block) diagonal sampling matrix generated by $a$ is the block diagonal matrix of size $N(\nn)s\times N(\nn)t$ given by
\begin{equation*}
D_\nn(a)=\mathop{\rm diag}_{\ii=\bu,\ldots,\nn}a\Bigl(\frac\ii\nn\Bigr)=\Bigl[a\Bigl(\frac\ii\nn\Bigr)\delta_{\ii\jj}\Bigr]_{\ii,\jj=\bu}^\nn.
\end{equation*}
Any sequence of matrices of the form $\{D_\nn(a)\}_n$, with $a:[0,1]^d\to\mathbb C^{s\times t}$ and $\{\nn=\nn(n)\}_n\subseteq\mathbb N^d$ such that $\nn\to\infty$ as $n\to\infty$, is referred to as a sequence of (multilevel block) diagonal sampling matrices generated by $a$.

\subsubsection{Toeplitz sequences}
If $f:[-\pi,\pi]^d\to\mathbb C^{s\times t}$ is a function in $L^1([-\pi,\pi]^d)$, its Fourier coefficients are denoted by $\{f_\kk\}_{\kk\in\mathbb Z^d}$ and are defined as follows:
\begin{equation*}
f_\kk=\frac1{(2\pi)^d}\int_{[-\pi,\pi]^d}f(\btheta)\hspace{0.75pt}{\rm e}^{-{\rm i}\kk\cdot\btheta}{\rm d}\btheta\,\in\,\mathbb C^{s\times t},\qquad\kk\in\mathbb Z^d,
\end{equation*}
where $\kk\cdot\btheta=k_1\theta_1+\ldots+k_d\theta_d$ is the scalar product and it is understood that the integral of a matrix-valued function is computed componentwise.
If $\nn\in\mathbb N^d$ and $f:[-\pi,\pi]^d\to\mathbb C^{s\times t}$ is a function in $L^1([-\pi,\pi]^d)$, the $\nn$th (multilevel block) Toeplitz matrix generated by $f$ is the $N(\nn)s\times N(\nn)t$ matrix given by
\begin{equation*}
T_\nn(f)=[f_{\ii-\jj}]_{\ii,\jj=\bu}^\nn.
\end{equation*}
Any sequence of matrices of the form $\{T_\nn(f)\}_n$, with $f:[-\pi,\pi]^d\to\mathbb C^{s\times t}$ in $L^1([-\pi,\pi]^d)$ and $\{\nn=\nn(n)\}_n\subseteq\mathbb N^d$ such that $\nn\to\infty$ as $n\to\infty$, is referred to as a (multilevel block) Toeplitz sequence generated by $f$.

\subsection{Approximating classes of sequences}

The notion of approximating classes of sequences (a.c.s.)\ is the cornerstone of an asymptotic approximation theory for sequences of matrices that is fundamental for the theory of GLT sequences; see \cite[Chapter~5]{GLTbookI}. The formal definition of a.c.s.\ is reported in Definition~\ref{a.c.s.}. 
Throughout this paper, we denote by $\|\cdot\|$ the Euclidean norm ($2$-norm) of vectors and the associated induced (operator) norm for matrices.
Recall that, for every matrix $X$, we have $\|X\|=\sigma_{\max}(X)$, where $\sigma_{\max}(X)$ denotes the maximum singular value of $X$.

\begin{definition}[\textbf{approximating class of sequences}]\label{a.c.s.}
Let $\{A_n\}_n$ be a sequence of matrices with $A_n$ of size $d_n\times e_n$, and let $\{\{B_{n,m}\}_n\}_m$ be a sequence of sequences of matrices with $B_{n,m}$ of size $d_n\times e_n$. We say that $\{\{B_{n,m}\}_n\}_m$ is an approximating class of sequences (a.c.s.)\ for $\{A_n\}_n$, and we write $\{B_{n,m}\}_n\xrightarrow{\rm a.c.s.}\{A_n\}_n$, if the following condition is met: for every $m$ there exists $n_m$ such that, for $n\ge n_m$,
\begin{equation*}
A_n=B_{n,m}+R_{n,m}+N_{n,m},\qquad{\rm rank}(R_{n,m})\le c(m)(d_n\wedge e_n),\qquad\|N_{n,m}\|\le\omega(m),
\end{equation*}
where $n_m,\,c(m),\,\omega(m)$ depend only on $m$ and $\lim_{m\to\infty}c(m)=\lim_{m\to\infty}\omega(m)=0$.
\end{definition}

\subsection{GLT sequences}\label{sec:GLT}

The definition of GLT sequences that we propose here can be inferred from \cite[Section~5]{bgR}. It is an equivalent alternative to the usual (complicated) definition.
Throughout this paper, we denote by $O_{s,t}$ the $s\times t$ zero matrix and by $I_s$ the $s\times s$ identity matrix.
If the size is clear from the context, we often write $O$ and $I$ instead of $O_{s,t}$ and~$I_s$.

\begin{definition}[\textbf{generalized locally Toeplitz sequence}]\label{GLT_def}
Let $\{A_n\}_n$ be a sequence of matrices, with $A_n$ of size $N(\nn)s\times N(\nn)t$ for some fixed positive integers $s,t$ and some sequence of positive $d$-indices $\{\nn=\nn(n)\}_n$ tending to $\infty$, and let $\kappa:[0,1]^d\times[-\pi,\pi]^d\to\mathbb C^{s\times t}$ be measurable. We say that $\{A_n\}_n$ is a (multilevel block) GLT sequence with symbol $\kappa$, and we write $\{A_n\}_n\sim_{\rm GLT}\kappa$, if there exist functions $a_{i,m}$, $f_{i,m}$, $i=1,\ldots,N_m$, such that:
\begin{itemize}[nolistsep,leftmargin=*]
	\item $a_{i,m}:[0,1]^d\to\mathbb C$ is continuous a.e.\ on $[0,1]^d$ and $f_{i,m}:[-\pi,\pi]^d\to\mathbb C^{s\times t}$ belongs to $L^1([-\pi,\pi]^d)$;
	\item $\kappa_m(\xx,\btheta)=\sum_{i=1}^{N_m}a_{i,m}(\xx)f_{i,m}(\btheta)\to\kappa(\xx,\btheta)$ a.e.\ on $[0,1]^d\times[-\pi,\pi]^d$; \vspace{2pt}
	\item $\{A_{n,m}\}_n=\bigl\{\sum_{i=1}^{N_m}D_\nn(a_{i,m}I_s)T_\nn(f_{i,m})\bigr\}_n\xrightarrow{\rm a.c.s.}\{A_n\}_n$.
\end{itemize}
\end{definition}

The symbol of a GLT sequence is unique, in the following sense: if $\{A_n\}_n\sim_{\rm GLT}\kappa$, with $\{A_n\}_n$ and $\kappa$ as in Definition~\ref{GLT_def}, and if $\xi:[0,1]^d\times[-\pi,\pi]^d\to\mathbb C^{s\times t}$ is another measurable function, then we have $\{A_n\}_n\sim_{\rm GLT}\xi$ if and only if $\xi=\kappa$ a.e.\ on $[0,1]^d\times[-\pi,\pi]^d$; see \cite[Theorem~4.3]{bgR}.
The properties of GLT sequences that we need in this paper are listed below. The corresponding proofs can be found in \cite{bgR,bg}.
Throughout this paper, if $X$ is a matrix, we denote by $X^\dag$ the Moore--Penrose pseudoinverse of $X$; see \cite{Bini,GV} for details. What is relevant for our purposes is that $X^\dag=X^{-1}$ whenever $X$ is invertible.

\begin{enumerate}[nolistsep,leftmargin=37.5pt]
	\item[\textbf{GLT1.}] Let $\{A_n\}_n$ be a sequence of matrices, with $A_n$ of size $N(\nn)s\times N(\nn)t$ for some fixed positive integers $s,t$ and some sequence of positive $d$-indices $\{\nn=\nn(n)\}_n$ tending to $\infty$, and let $\kappa:[0,1]^d\times[-\pi,\pi]^d\to\mathbb C^{s\times t}$ be measurable.
	\begin{itemize}[nolistsep,leftmargin=*]
		\item If $\{A_n\}_n\sim_{\rm GLT}\kappa$, then $\{A_n\}_n\sim_\sigma\kappa$.
		\item If $\{A_n\}_n\sim_{\rm GLT}\kappa$ and the matrices $A_n$ are Hermitian, then $s=t$, $\kappa$ is Hermitian a.e.\ on $[0,1]^d\times[-\pi,\pi]^d$, and $\{A_n\}_n\sim_\lambda\kappa$.
	\end{itemize}
	\item[\textbf{GLT2.}] Let $s,t$ be positive integers and let $\{\nn=\nn(n)\}_n$ be a sequence of positive $d$-indices tending to $\infty$. Then,
	\begin{itemize}[nolistsep,leftmargin=*]
		\item $\{T_\nn(f)\}_n\sim_{\rm GLT}\kappa(\xx,\btheta)=f(\btheta)$ if $f:[-\pi,\pi]^d\to\mathbb C^{s\times t}$ belongs to $L^1([-\pi,\pi]^d)$;
		\item $\{D_\nn(a)\}_n\sim_{\rm GLT}\kappa(\xx,\btheta)=a(\xx)$ if $a:[0,1]^d\to\mathbb C^{s\times t}$ is continuous a.e.\ on $[0,1]^d$;
		\item for every sequence of matrices $\{Z_n\}_n$ with $Z_n$ of size $N(\nn)s\times N(\nn)t$, we have
		$\{Z_n\}_n\sim_{\rm GLT}\kappa(\xx,\btheta)=O_{s,t}$ if and only if $\{Z_n\}_n\sim_\sigma0$.
	\end{itemize}
	\item[\textbf{GLT3.}] Let $\{A_n\}_n$, $\{B_n\}_n$ be sequences of matrices, with $A_n$ of size $N(\nn)s\times N(\nn)t$ and $B_n$ of size $N(\nn)u\times N(\nn)v$ for some fixed positive integers $s,t,u,v$ and some sequence of positive $d$-indices $\{\nn=\nn(n)\}_n$ tending to $\infty$, and let $\kappa:[0,1]^d\times[-\pi,\pi]^d\to\mathbb C^{s\times t}$ and $\xi:[0,1]^d\times[-\pi,\pi]^d\to\mathbb C^{u\times v}$ be measurable. Suppose that $\{A_n\}_n\sim_{\rm GLT}\kappa$ and $\{B_n\}_n\sim_{\rm GLT}\xi$. Then,
	\begin{itemize}[nolistsep,leftmargin=*]
		\item $\{\alpha A_n+\beta B_n\}_n\sim_{\rm GLT}\alpha\kappa+\beta\xi$ for every $\alpha,\beta\in\mathbb C$ if $\kappa$ and $\xi$ are summable (i.e., $s=u$ and $t=v$);
		\item $\{A_nB_n\}_n\sim_{\rm GLT}\kappa\xi$ if $\kappa$ and $\xi$ are multipliable (i.e., $t=u$);
		\item $\{A_n^\dag\}_n\sim_{\rm GLT}\kappa^{-1}$ if $\kappa$ is invertible a.e.~on $[0,1]^d\times[-\pi,\pi]^d$ (which implies in particular that $s=t$).
	\end{itemize}
\end{enumerate}

\section{DDM discretization matrices and GLT sequences}\label{sec:DDMs-GLT}

In this section, we illustrate through examples that the matrices arising from a DDM discretization often give rise to a GLT sequence.
This means that, if $A_n$ is a DDM discretization matrix corresponding to the mesh fineness parameter~$n$ and if mesh refinement occurs when $n$ tends to $\infty$, then $\{A_n\}_n$ is often a GLT sequence.
Throughout this paper, if $\Omega\subseteq\mathbb R^d$, we denote by $\accentset{\circ}\Omega$ and $\overline\Omega$, respectively, the interior and the closure of $\Omega$.
If $\{\alpha_n\}_n$ and $\{\beta_n\}_n$ are any two numerical sequences with $\beta_n\ne0$ eventually as $n\to\infty$, the notation $\alpha_n\sim\beta_n$ means that $\alpha_n/\beta_n\to1$ as $n\to\infty$, and the notation $\alpha_n=o(\beta_n)$ means that $\alpha_n/\beta_n\to0$ as $n\to\infty$.

\begin{example}[\textbf{decomposition of $(0,1)$ into a unique subdomain -- FD discretization}]\label{exa1}
Consider the second-order boundary value problem
\begin{equation}\label{classic-p}
\left\{\begin{aligned}
-u''(x)&=f(x),\qquad x\in(0,1),\\
u(0)&=u(1)=0.
\end{aligned}\right.
\end{equation}
Let $n\in\mathbb N$, set $h=(n+1)^{-1}$ and $x_i=ih$ for $i=0,\ldots,n+1$.
We decompose the domain $(0,1)$ into a unique subdomain equal to itself. Then, we look for an approximation $\tilde u$ of the solution $u$ over the unique subdomain $(0,1)$.
To this end, we use the classical 3-point central finite difference (FD) approximation of the one-dimensional Laplace operator:
\begin{equation*}
-u''(x_i)\approx\frac{-u(x_{i-1})+2u(x_i)-u(x_{i+1})}{h^2},\qquad i=1,\ldots,n.
\end{equation*}
This means that the values of the solution $u$ at the points $x_i$, $i=1,\ldots,n$, satisfy (approximately) the following linear system:
\begin{equation*}
-u(x_{i-1})+2u(x_i)-u(x_{i+1})=h^2f(x_i),\qquad i=1,\ldots,n.
\end{equation*}
We then approximate the nodal value $u(x_i)$ with the value $u_i$ for $i=0,\ldots,n+1$, where $u_0=u_{n+1}=0$ due to the Dirichlet boundary conditions and the vector $(u_1,\ldots,u_n)$ solves the linear system
\begin{equation}\label{i-form}
-u_{i-1}+2u_i-u_{i+1}=h^2f(x_i),\qquad i=1,\ldots,n.
\end{equation}
The approximation $\tilde u$ of the solution $u$ is defined as any function on $[0,1]$ such that $\tilde u(x_i)=u_i$ for every $i=0,\ldots,n+1$.
For example, we can take $\tilde u$ as the piecewise linear continuous function that interpolates the values $u_0,\ldots,u_{n+1}$ over the nodes $x_0,\ldots,x_{n+1}$.
The matrix of the linear system \eqref{i-form}, which is the DDM discretization matrix of this example, is given by
\begin{equation}\label{Tn(2-2cost)}
A_n=\begin{bmatrix}
2 & -1 & & & \\
-1 & 2 & -1 & & \\
& \ddots & \ddots & \ddots & \\
& & -1 & 2 & -1\\
& & & -1 & 2
\end{bmatrix}=T_n(2-2\cos\theta).
\end{equation}
By {\bf GLT2}, we have $\{A_n\}_n\sim_{\rm GLT}2-2\cos\theta$.
\end{example}

\begin{example}[\textbf{decomposition of $(0,1)$ into two subdomains -- FD discretization}]\label{exa2}
Consider again problem \eqref{classic-p}, let $n\in\mathbb N$, set $h=(n+1)^{-1}$ and $x_i=ih$ for $i=0,\ldots,n+1$. We decompose the domain $(0,1)$ into two subdomains $\Omega_1=\Omega_{1,n}=(0,1/2+vh)$ and $\Omega_2=\Omega_{2,n}=(1/2-vh,1)$, where $v\le(n+1)/2$ is a non-negative integer that determines the size of the overlap; see Figure~\ref{DD1d}.
In practice, the overlap $(1/2-vh,1/2+vh)$ consists of $2v$ mesh cells. Depending on $v$, the overlap may or may not vanish in the limit of mesh refinement $n\to\infty$.
For example, the overlap vanishes as $n\to\infty$ if $v$ is a fixed number independent of $n$. On the contrary, the overlap does not vanish as $n\to\infty$ if $v=v_n=\lfloor(n+1)\epsilon\rfloor$ for some $\epsilon\in(0,1/2)$, in which case $\Omega_1$ ``converges'' to $(0,1/2+\epsilon)$ as $n\to\infty$, $\Omega_2$ ``converges'' to $(1/2-\epsilon,1)$ as $n\to\infty$, and the overlap ``converges'' to $(1/2-\epsilon,1/2+\epsilon)$ as $n\to\infty$. 
Note that, in the case $v=0$, there is no overlap between $\Omega_1$ and $\Omega_2$: this is the so-called non-overlapping case.

\begin{figure}
\centering
\includegraphics[width=0.75\textwidth]{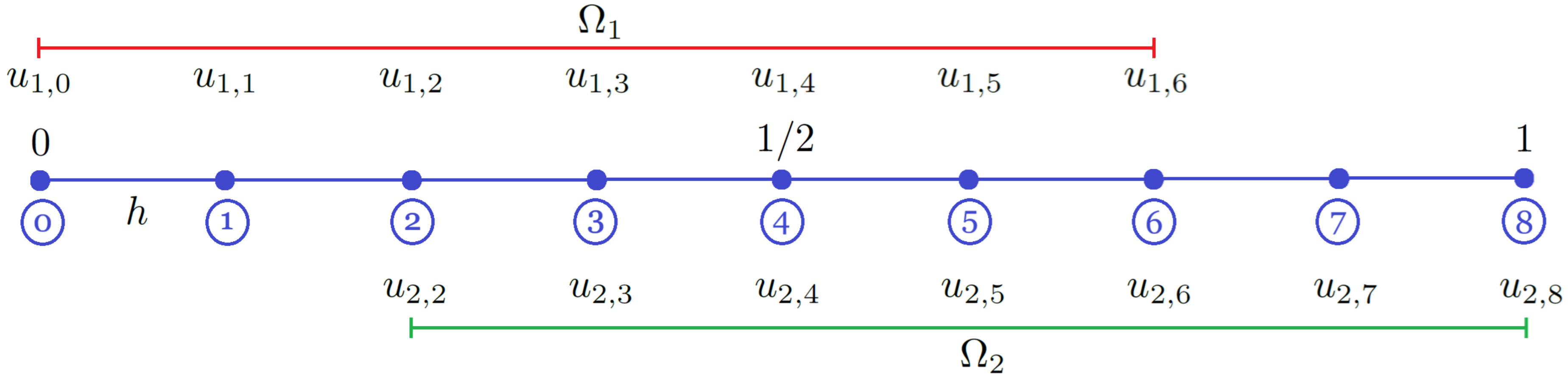}
\caption{Illustration for Example~\ref{exa2} in the case $n=7$ and $v=2$.}
\label{DD1d}
\end{figure}

Let us assume that $n$ is odd as in Figure~\ref{DD1d}, so that $x=x_{(n+1)/2}=1/2$ coincides with a grid point as well as its neighborhoods
\begin{align*}
x_{(n+1)/2-v}&=1/2-vh\\
\ \ \vdots\\
x_{(n+1)/2-1}&=1/2-h\\
x_{(n+1)/2}&=1/2\\
x_{(n+1)/2+1}&=1/2+h\\
\ \ \vdots\\
x_{(n+1)/2+v}&=1/2+vh.
\end{align*}
Ten, we look for two approximations of the solution $u$:
\begin{itemize}[nolistsep,leftmargin=*]
	\item an approximation $\tilde u_1$ of $u_1=u\big|_{\Omega_1}$ defined over the subdomain $\Omega_1$;
	\item an approximation $\tilde u_2$ of $u_2=u\big|_{\Omega_2}$ defined over the subdomain $\Omega_2$.
\end{itemize}
Note that the set of grid points in $\overline\Omega_1$ is given by $\{x_i:i\in\widehat\Omega_1\}$ with $\widehat\Omega_1=\{0,\ldots,(n+1)/2+v\}$ and the set of grid points in $\overline\Omega_2$ is given by $\{x_i:i\in\widehat\Omega_2\}$ with $\widehat\Omega_2=\{(n+1)/2-v,\ldots,n+1\}$, and define
\begin{alignat}{3}
\tilde u_1(x_i)&=u_{1,i}, &\qquad i&\in\widehat\Omega_1=\left\{0,\ldots,\dfrac{n+1}2+v\right\},\label{uk1}\\
\tilde u_2(x_i)&=u_{2,i}, &\qquad i&\in\widehat\Omega_2=\left\{\dfrac{n+1}2-v,\ldots,n+1\right\}.\label{uk2}
\end{alignat}
Determining $\tilde u_1$ and $\tilde u_2$ is equivalent to determining the $n+2v+3$ unknowns \eqref{uk1}--\eqref{uk2}. Once this is done, $\tilde u_1$ is defined as any function on $\overline\Omega_1=[0,1/2+vh]$ satisfying \eqref{uk1}, $\tilde u_2$ is defined as any function on $\overline\Omega_2=[1/2-vh,1]$ satisfying \eqref{uk2}, and the global approximation $\tilde u$ of the solution $u$ over the whole domain $(0,1)$ is defined as
\begin{equation}\label{utilde}
\tilde u(x)=\begin{cases}
\tilde u_1(x), &\quad x\in[0,1/2-vh),\\[3pt]
\tilde u_2(x), &\quad x\in(1/2+vh,1],\\[3pt]
\alpha_1\tilde u_1(x)+\alpha_2\tilde u_2(x), &\quad x\in[1/2-vh,1/2+vh],
\end{cases}
\end{equation}
where $\alpha_1\tilde u_1(x)+\alpha_2\tilde u_2(x)$ is a convex combination of $\tilde u_1(x)$ and $\tilde u_2(x)$, i.e., $\alpha_1,\alpha_2\ge0$ and $\alpha_1+\alpha_2=1$.

There are several ways to determine $\tilde u_1$ and $\tilde u_2$, i.e., the $n+2v+3$ unknowns \eqref{uk1}--\eqref{uk2}.
In this example, we propose a way that works in the overlapping case $v\ne0$.
Specifically, we use the classical 3-point central FD approximation of the one-dimensional Laplace operator at the $(n+1)/2+v-2$ points $x_1,\ldots,x_{(n+1)/2+v-2}$ lying inside $\Omega_1$ and at the $(n+1)/2+v-2$ points $x_{(n+1)/2-v+2},\ldots,x_n$ lying inside $\Omega_2$. In addition, we impose the two Dirichlet boundary conditions and two additional conditions at each of the two interface points $x=1/2\pm vh$ in order to make the number of equations equal to the number $n+2v+3$ of unknowns. The resulting linear system is the following:
\begin{equation}\label{i-form'}
\begin{cases}
u_{1,0}=0 &\quad\mbox{\footnotesize(Dirichlet boundary condition at $x=0$)}\\[5pt]
-u_{1,i-1}+2u_{1,i}-u_{1,i+1}=h^2f(x_i) &\quad i=1,\ldots,\dfrac{n+1}2+v-2\ \ \mbox{\footnotesize(FD approx.~in $\Omega_1$)}\\[5pt]
\rd u_{1,(n+1)/2+v}=u_{2,(n+1)/2+v}&\rd\quad\mbox{\footnotesize(interface condition: $C^0$ continuity at $x=1/2+vh$)}\\[5pt]
\bl u_{1,(n+1)/2+v}-u_{1,(n-1)/2+v}=u_{2,(n+3)/2+v}-u_{2,(n+1)/2+v}&\bl\quad\mbox{\footnotesize(interface condition: $C^1$ continuity at $x=1/2+vh$)}\\[5pt]
\hline\\[-10pt]
\rd u_{2,(n+1)/2-v}=u_{1,(n+1)/2-v}&\rd\quad\mbox{\footnotesize(interface condition: $C^0$ continuity at $x=1/2-vh$)}\\[5pt]
\bl u_{2,(n+3)/2-v}-u_{2,(n+1)/2-v}=u_{1,(n+1)/2-v}-u_{1,(n-1)/2-v}&\bl\quad\mbox{\footnotesize(interface condition: $C^1$ continuity at $x=1/2-vh$)}\\[5pt]
-u_{2,i-1}+2u_{2,i}-u_{2,i+1}=h^2f(x_i) &\quad i=\dfrac{n+1}2-v+2,\ldots,n\ \ \mbox{\footnotesize(FD approx.~in $\Omega_2$)}\\[5pt]
u_{2,n+1}=0 &\quad\mbox{\footnotesize(Dirichlet boundary condition at $x=1$)}
\end{cases}
\end{equation}
It must now be said that, in the standard DDM approach, the boundary values $u_{1,0}$, $u_{2,n+1}$ and the values ${\rd u_{1,(n+1)/2+v}}$, ${\rd u_{2,(n+1)/2-v}}$ are not considered as unknowns, because the former are equal to $0$ due to the Dirichlet boundary conditions and the latter are equal to ${\rd u_{2,(n+1)/2+v}}$, ${\rd u_{1,(n+1)/2-v}}$, respectively, due to the red interface conditions. Not considering $u_{1,0}$, $u_{2,n+1}$, ${\rd u_{1,(n+1)/2+v}}$, ${\rd u_{2,(n+1)/2-v}}$ as unknowns is equivalent to removing the first, last, red equations from the linear system \eqref{i-form'}, after replacing everywhere 
$u_{1,0}$, $u_{2,n+1}$ with $0$ and ${\rd u_{1,(n+1)/2+v}}$, ${\rd u_{2,(n+1)/2-v}}$ with ${\rd u_{2,(n+1)/2+v}}$, ${\rd u_{1,(n+1)/2-v}}$, respectively. The resulting linear system is the following:
\begin{equation}\label{i-form'-reduced}
\begin{cases}
-u_{1,i-1}+2u_{1,i}-u_{1,i+1}=h^2f(x_i) &\quad i=1,\ldots,\dfrac{n+1}2+v-2\ \ \mbox{\footnotesize(FD approx.~in $\Omega_1$)}\\[5pt]
\bl -u_{1,(n-1)/2+v}+2u_{2,(n+1)/2+v}-u_{2,(n+3)/2+v}=0&\bl\quad\mbox{\footnotesize(interface condition: $C^1$ continuity at $x=1/2+vh$)}\\[7.5pt]
\hline\\[-10pt]
\bl -u_{1,(n-1)/2-v}+2u_{1,(n+1)/2-v}-u_{2,(n+3)/2-v}=0&\bl\quad\mbox{\footnotesize(interface condition: $C^1$ continuity at $x=1/2-vh$)}\\[5pt]
-u_{2,i-1}+2u_{2,i}-u_{2,i+1}=h^2f(x_i) &\quad i=\dfrac{n+1}2-v+2,\ldots,n\ \ \mbox{\footnotesize(FD approx.~in $\Omega_2$)}
\end{cases}
\end{equation}
The matrix of the linear system \eqref{i-form'-reduced}, which is the DDM discretization matrix of this example, is given by
\begin{equation}\label{e2:A_n}
A_n=\left[\begin{array}{rrrrrr|rrrrrr}
2 & -1 & & & & & & & & & & \\
-1 & 2 & -1 & & & & & & & & & \\
& \ddots & \ddots & \ddots & & & & & & & & \\
& & \ddots & \ddots & \ddots & & & & & & & \\
& & & -1 & 2 & -1 & & & & & & \\
& & & & & \bl-1 & \bl0 & \bl\cdots & \bl0 & \bl2 & \bl-1 & \\
\hline
& \bl-1 & \bl2 & \bl0 & \bl\cdots & \bl0^{\vphantom{\int}} & \bl-1 & & & & & \\
& & & & & & -1 & 2 & -1 & & & \\
& & & & & & & \ddots & \ddots & \ddots & & \\
& & & & & & & & \ddots & \ddots & \ddots & \\
& & & & & & & & & -1 & 2 & -1 \\
& & & & & & & & & & -1 & 2\\
\end{array}\right]=T_{n+2v-1}(2-2\cos\theta)+R_n,
\end{equation}
where the number of zeros between $-1$ and $2$ in the above blue rows is equal to $2v-1$ and $R_n$ is a low-rank matrix with ${\rm rank}(R_n)\le2$ for every $n$.
Since $\{R_n\}_n\sim_\sigma0$, {\bf GLT2}--{\bf GLT3} yield $\{A_n\}_n\sim_{\rm GLT}2-2\cos\theta$.

A remark is in order before moving on to the next example.
While the red interface conditions in \eqref{i-form'} are essentially mandatory (in order to avoid that the approximate solution $\tilde u$ in \eqref{utilde} is discontinuous at $x=1/2+vh$ or $x=1/2-vh$), the blue interface conditions can be replaced with other suitable conditions. For example, the first blue condition can be replaced with a higher-order $C^1$ continuity condition at $x=1/2+vh$, such as
	\[ \bl \frac{3u_{1,(n+1)/2+v}-4u_{1,(n-1)/2+v}+u_{1,(n-3)/2+v}}2=\frac{-3u_{2,(n+1)/2+v}+4u_{2,(n+3)/2+v}-u_{2,(n+5)/2+v}}2, \]
	or with a condition that forces the FD approximation also at $x=x_i$ for $i=(n+1)/2+v-1$, such as
	\[ \bl -u_{1,i-1}+2u_{1,i}-u_{1,i+1}=h^2f(x_i),\qquad i=\frac{n+1}2+v-1. \]
	Similar replacements can also be considered for the second blue condition.
	It is clear, however, that any change in the blue interface conditions only produces a change in the blue rows of the matrix \eqref{e2:A_n}. Thus, if we denote by $\tilde A_n$ the new matrix, then $\tilde A_n=T_{n+2v+3}(2+2\cos\theta)+\tilde R_n$ with ${\rm rank}(\tilde R_n)\le2$, and {\bf GLT2}--{\bf GLT3} yield again $\{\tilde A_n\}_n\sim_{\rm GLT}2-2\cos\theta$.
\end{example}

\begin{remark}\label{always-low-rank-perturbation}
The difference between the classical FD discretization matrix \eqref{Tn(2-2cost)} and the FD-DDM discretization matrix \eqref{e2:A_n} amounts to a low-rank perturbation caused by the interface conditions. Even if we decompose the domain $(0,1)$ into more than two subdomains, the difference between the classical FD discretization matrix \eqref{Tn(2-2cost)} and the resulting FD-DDM discretization matrix remains a low-rank perturbation caused by the interface conditions. This shows why a DDM discretization often preserves the GLT structure and the symbol of the ``corresponding'' classical discretization (which in Examples~\ref{exa1}--\ref{exa2} is represented by the classical 3-point central FD approximation).
\end{remark}

In Examples~\ref{exa5}--\ref{exa6}, we assume that the reader has prior knowledge of B-splines, whose properties will be used systematically.
For readers who are not familiar with B-splines, we recommend \cite{LMS}.

\begin{example}[\textbf{decomposition of $(0,1)$ into a unique subdomain -- IgA discretization}]\label{exa5}
Consider again problem~\eqref{classic-p}, whose weak form reads as follows: find $u\in H_0^1(0,1)$ such that
\begin{equation}\label{weak_solution}
\int_0^1u'(x)v'(x){\rm d}x=\int_0^1f(x)v(x){\rm d}x,\qquad\forall\,v\in H^1_0(0,1).
\end{equation}
Under mild assumptions on the function $f$, problem \eqref{weak_solution} has a unique solution $u$, which is precisely the solution of \eqref{classic-p}; see \cite[Chapter~8]{Brezis}.
In the framework of Galerkin B-spline isogeometric analysis (IgA) based on B-splines of degree $p\ge1$ and maximal smoothness $C^{p-1}$ defined over a uniform grid with $n+1$ points, we fix $p,n\ge1$ and we look for an approximation $\tilde u$ of $u$ by solving \eqref{weak_solution} in the polynomial spline space
\begin{align*}
\mathcal W_{n,p}=\left\{s\in\mathcal V_{n,p}:\,s(0)=s(1)=0\right\}\subset H_0^1(0,1),
\end{align*}
where
\[ \mathcal V_{n,p}=\left\{s\in C^{p-1}[0,1]:\,s\big|_{\left[\frac{k}{n},\frac{k+1}{n}\right)}\in\mathbb P_p\ \textup{for}\ k=0,\ldots,n-1\right\}. \]
More precisely, the approximate solution $\tilde u$ is defined as the (unique) solution of the following (Galerkin) problem, which is the same as problem \eqref{weak_solution} with $H^1_0(0,1)$ replaced by the subspace $\mathcal W_{n,p}$: find $\tilde u\in\mathcal W_{n,p}$ such that
\[ \int_0^1\tilde u'(x)v'(x){\rm d}x=\int_0^1f(x)v(x){\rm d}x,\qquad\forall\,v\in\mathcal W_{n,p}. \]
Denoting by $\{B_{1,p},\ldots,B_{n+p,p}\}$ the B-spline basis of $\mathcal V_{n,p}$, the B-spline basis of $\mathcal W_{n,p}$ is given by $\{B_{2,p},\ldots,B_{n+p-1,p}\}$.
Thus, we can write $\tilde u=\sum_{j=2}^{n+p-1}u_jB_{j,p}$ for a unique vector $\boldsymbol u=(u_2,\ldots,u_{n+p-1})^T$. By linearity, the computation of $\tilde u$ (i.e., of $\boldsymbol u$) reduces to solving the linear system
\begin{equation}\label{IgA-system}
A_{n,p}\boldsymbol u=\boldsymbol f,\qquad A_{n,p}=\left[\int_0^1B_{j,p}'(x)B_{i,p}'(x){\rm d}x\right]_{i,j=2}^{n+p-1},\qquad\boldsymbol f=\left[\int_0^1f(x)B_{i,p}(x){\rm d}x\right]_{i=2}^{n+p-1}.
\end{equation}
For a detailed description of the Galerkin B-spline IgA discretization of \eqref{classic-p} briefly outlined here, we refer the reader to \cite[Section~10.7.2]{GLTbookI}.
The matrix of the linear system \eqref{IgA-system}, which is the DDM discretization matrix of this example, is given by
\begin{equation}\label{Anp-dec}
A_{n,p}=n\,T_{n+p-2}(f_p)+R_{n,p},\qquad{\rm rank}(R_{n,p})\le4(p-1)=o(n),
\end{equation}
where
\begin{equation}\label{fp-symbol}
f_p(\theta)=-\phi''_{2p+1}(p+1)-2\sum_{k=1}^p\phi''_{2p+1}(p+1-k)\cos(k\theta)
\end{equation}
and $\phi_{2p+1}$ denotes the cardinal B-spline of degree $2p+1$ over the uniform knot sequence $0,1,\ldots,2p+2$; see \cite[Section~10.7.2]{GLTbookI} for details.
Since $\{n^{-1}R_{n,p}\}_n$ is a sequence of low-rank matrices (and hence it is zero-distributed), it follows from {\bf GLT2}--{\bf GLT3} that
\[ \{n^{-1}A_{n,p}\}_n=\{T_{n+p-2}(f_p)+n^{-1}R_{n,p}\}_n\sim_{\rm GLT}f_p(\theta). \]
\end{example}

\begin{example}[\textbf{decomposition of $(0,1)$ into two overlapping subdomains -- IgA discretization}]\label{exa6}
Consider again problem~\eqref{classic-p}. We decompose the domain $(0,1)$ into two overlapping subdomains
\[ \Omega_1=(0,\beta), \qquad \Omega_2=(\alpha,1), \qquad 0<\alpha<\beta<1. \]
In this example, it is assumed that $\Omega_1$ and $\Omega_2$ are fixed subomains independent of the considered discretization.
In particular, the overlap between $\Omega_1$ and $\Omega_2$ is the fixed interval $(\alpha,\beta)$.

We discretize \eqref{classic-p} on each subdomain by a Galerkin B-spline IgA discretization based on B-splines of degree $p\ge1$ and maximal smoothness $C^{p-1}$ defined over a uniform grid with $n+1$ points.
More precisely, let $u$ be the solution of \eqref{classic-p}, let $u_1=u\big|_{\Omega_1}$, $u_2=u\big|_{\Omega_2}$, and note that $u_1$, $u_2$ satisfy the following problems:
\begin{align}
\label{pOmega1}\textup{(problem on $\Omega_1$)}\quad
\left\{\begin{aligned}
-u_1''(x)&=f(x),\qquad x\in\Omega_1,\\
u_1(0)&=0,\\
u_1(\beta)&=u_2(\beta),
\end{aligned}\right.\\[5pt]
\label{pOmega2}\textup{(problem on $\Omega_2$)}\quad
\left\{\begin{aligned}
-u_2''(x)&=f(x),\qquad x\in\Omega_2,\\
u_2(\alpha)&=u_1(\alpha),\\
u_2(1)&=0,
\end{aligned}\right.
\end{align}
where $u_2(\beta)$ is considered as a known parameter in problem \eqref{pOmega1} and $u_1(\alpha)$ is considered as a known parameter in problem \eqref{pOmega2}.
For $A,B\in\mathbb R$, define
\begin{align*}
H^1_{A,B}(\Omega_1)&=\{v\in H^1(\Omega_1):\,v(0)=A,\ v(\beta)=B\},\\
H^1_{A,B}(\Omega_2)&=\{v\in H^1(\Omega_2):\,v(\alpha)=A,\ v(1)=B\}.
\end{align*}
Note that $H^1_{0,0}(\Omega_1)=H^1_0(\Omega_1)$ and $H^1_{0,0}(\Omega_2)=H^1_0(\Omega_2)$.
The weak forms of problems \eqref{pOmega1}--\eqref{pOmega2} read as follows: find $u_1\in H^1_{0,u_2(\beta)}(\Omega_1)$ such that
\begin{equation}\label{Galpi1}
\int_{\Omega_1}u_1'(x)v'(x){\rm d}x=\int_{\Omega_1}f(x)v(x){\rm d}x,\qquad\forall\,v\in H^1_0(\Omega_1);
\end{equation}
find $u_2\in H^1_{u_1(\alpha),0}(\Omega_2)$ such that
\begin{equation}\label{Galpi2}
\int_{\Omega_2}u_2'(x)v'(x){\rm d}x=\int_{\Omega_2}f(x)v(x){\rm d}x,\qquad\forall\,v\in H^1_0(\Omega_2).
\end{equation}
For $p,n\ge1$ and $A,B\in\mathbb R$, define
\begin{equation*}
\begin{aligned}
\mathcal W_{n,p}^{A,B}(\Omega_1)&=\left\{s\in\mathcal V_{n,p}(\Omega_1):\,s(0)=A,\ s(\beta)=B\right\},\\
\mathcal W_{n,p}^{A,B}(\Omega_2)&=\left\{s\in\mathcal V_{n,p}(\Omega_2):\,s(\alpha)=A,\ s(1)=B\right\},
\end{aligned}
\end{equation*}
where
\begin{equation*}
\begin{aligned}
\mathcal V_{n,p}(\Omega_1)&=\left\{s\in C^{p-1}(\overline\Omega_1):\,s\big|_{\left[\frac{k\beta}{n},\frac{(k+1)\beta}{n}\right)}\in\mathbb P_p\ \textup{for}\ k=0,\ldots,n-1\right\},\\
\mathcal V_{n,p}(\Omega_2)&=\left\{s\in C^{p-1}(\overline\Omega_2):\,s\big|_{\left[\alpha+\frac{k(1-\alpha)}{n},\,\alpha+\frac{(k+1)(1-\alpha)}{n}\right)}\in\mathbb P_p\ \textup{for}\ k=0,\ldots,n-1\right\}.
\end{aligned}
\end{equation*}
The approximations $\tilde u_1$, $\tilde u_2$ of $u_1$, $u_2$ are defined as the solutions of the following (Galerkin) problems, which are the same as problems \eqref{Galpi1}--\eqref{Galpi2} with $H^1_{0,u_2(\beta)}(\Omega_1)$, $H^1_0(\Omega_1)$ and $H^1_{u_1(\alpha),0}(\Omega_2)$, $H^1_0(\Omega_2)$ replaced by $\mathcal W_{n,p}^{0,\tilde u_2(\beta)}(\Omega_1)$, $\mathcal W_{n,p}^{0,0}(\Omega_1)$ and $\mathcal W_{n,p}^{\tilde u_1(\alpha),0}$, $\mathcal W_{n,p}^{0,0}(\Omega_2)$, respectively: find $\tilde u_1\in\mathcal W_{n,p}^{0,\tilde u_2(\beta)}(\Omega_1)$ such that
\begin{equation}\label{Galpi1'}
\int_{\Omega_1}\tilde u_1'(x)v'(x){\rm d}x=\int_{\Omega_1}f(x)v(x){\rm d}x,\qquad\forall\,v\in\mathcal W_{n,p}^{0,0}(\Omega_1);
\end{equation}
find $\tilde u_2\in\mathcal W_{n,p}^{\tilde u_1(\alpha),0}(\Omega_2)$ such that
\begin{equation}\label{Galpi2'}
\int_{\Omega_2}\tilde u_2'(x)v'(x){\rm d}x=\int_{\Omega_2}f(x)v(x){\rm d}x,\qquad\forall\,v\in\mathcal W_{n,p}^{0,0}(\Omega_2).
\end{equation}
Once $\tilde u_1$, $\tilde u_2$ have been found by solving \eqref{Galpi1'}--\eqref{Galpi2'}, the global approximation $\tilde u$ of the solution $u$ over the whole domain $(0,1)$ is defined as
\begin{equation*}
\tilde u(x)=\begin{cases}
\tilde u_1(x), &\quad x\in[0,\alpha),\\[3pt]
\tilde u_2(x), &\quad x\in(\beta,1],\\[3pt]
\alpha_1\tilde u_1(x)+\alpha_2\tilde u_2(x), &\quad x\in[\alpha,\beta],
\end{cases}
\end{equation*}
where $\alpha_1\tilde u_1(x)+\alpha_2\tilde u_2(x)$ is a convex combination of $\tilde u_1(x)$ and $\tilde u_2(x)$, i.e., $\alpha_1,\alpha_2\ge0$ and $\alpha_1+\alpha_2=1$.

Denoting by $\{B_{1,p}^{(1)},\ldots,B_{n+p,p}^{(1)}\}$ and $\{B_{1,p}^{(2)},\ldots,B_{n+p,p}^{(2)}\}$ the B-spline bases of $\mathcal V_{n,p}(\Omega_1)$ and $\mathcal V_{n,p}(\Omega_2)$, respectively, the B-spline bases of $\mathcal W_{n,p}^{0,0}(\Omega_1)$ and $\mathcal W_{n,p}^{0,0}(\Omega_2)$ are given by $\{B_{2,p}^{(1)},\ldots,B_{n+p-1,p}^{(1)}\}$ and $\{B_{2,p}^{(2)},\ldots,B_{n+p-1,p}^{(2)}\}$, respectively.
Write
\[ \tilde u_1=\sum_{j=1}^{n+p}u_{1,j}B_{j,p}^{(1)},\qquad\tilde u_2=\sum_{j=1}^{n+p}u_{2,j}B_{j,p}^{(2)} \]
for unique vectors $\boldsymbol u_1=(u_{1,1},\ldots,u_{1,n+p})^T$ and $\boldsymbol u_2=(u_{2,1},\ldots,u_{2,n+p})^T$. 
By linearity, conditions \eqref{Galpi1'}--\eqref{Galpi2'} are equivalent to $2(n+p-2)$ linear equations on the $2(n+p)$ unknowns given by the components of the vectors $\boldsymbol u_1$ and $\boldsymbol u_2$:
\begin{alignat}{3}
\sum_{j=1}^{n+p}u_{1,j}\int_{\Omega_1}B_{j,p}^{(1)\mbox{\normalsize$'$}}(x)B_{i,p}^{(1)\mbox{\normalsize$'$}}(x){\rm d}x&=\int_{\Omega_1}f(x)B_{i,p}^{(1)}(x){\rm d}x,&\qquad &i=2,\ldots,n+p-1,\label{Galpi1''}\\[3pt]
\sum_{j=1}^{n+p}u_{2,j}\int_{\Omega_2}B_{j,p}^{(2)\mbox{\normalsize$'$}}(x)B_{i,p}^{(2)\mbox{\normalsize$'$}}(x){\rm d}x&=\int_{\Omega_2}f(x)B_{i,p}^{(2)}(x){\rm d}x,&\qquad &i=2,\ldots,n+p-1.\label{Galpi2''}
\end{alignat}
To these $2(n+p-2)$ equations, we must add four additional equations obtained by imposing the Dirichlet boundary conditions and the interface conditions at $x=\alpha,\beta$ that force $\tilde u_1$, $\tilde u_2$ to belong to $\mathcal W_{n,p}^{0,\tilde u_2(\beta)}$, $\mathcal W_{n,p}^{\tilde u_1(\alpha),0}$, respectively:
\begin{alignat}{5}
\tilde u_1(0)&=0&&\quad\iff\quad & u_{1,1}&=0,\label{Dir0}\\
\rd\tilde u_1(\beta)&\rd=\tilde u_2(\beta)&&\rd\quad\iff\quad &\rd u_{1,n+p}&\rd=\sum_{j=1}^{n+p}u_{2,j}B_{j,p}^{(2)}(\beta),\label{Intbeta}\\[3pt]
\rd\tilde u_2(\alpha)&\rd=\tilde u_1(\alpha)&&\rd\quad\iff\quad &\rd u_{2,1}&\rd=\sum_{j=1}^{n+p}u_{1,j}B_{j,p}^{(1)}(\alpha),\label{Intalpha}\\
\tilde u_2(1)&=0&&\quad\iff\quad & u_{2,n+p}&=0.\label{Dir1}
\end{alignat}
Putting together \eqref{Galpi1''}--\eqref{Dir1}, we get the following linear system of $2(n+p)$ equations in the $2(n+p)$ unknowns given by the components of $\boldsymbol u_1$ and $\boldsymbol u_2$:
\begin{equation}\label{forma_suprema}
\begin{cases}
u_{1,1}=0 &\quad\mbox{\footnotesize(Dirichlet boundary condition at $x=0$)}\\[5pt]
\displaystyle\sum_{j=1}^{n+p}u_{1,j}\int_{\Omega_1}B_{j,p}^{(1)\mbox{\normalsize$'$}}(x)B_{i,p}^{(1)\mbox{\normalsize$'$}}(x){\rm d}x=\int_{\Omega_1}f(x)B_{i,p}^{(1)}(x){\rm d}x, &\quad i=2,\ldots,n+p-1\ \ \mbox{\footnotesize(IgA approx.~in $\Omega_1$)}\\[5pt]
\rd\displaystyle u_{1,n+p}=\sum_{j=1}^{n+p}u_{2,j}B_{j,p}^{(2)}(\beta)&\rd\quad\mbox{\footnotesize(interface condition: $C^0$ continuity at $x=\beta$)}\\[12pt]
\hline\\[-12pt]
\rd\displaystyle u_{2,1}=\sum_{j=1}^{n+p}u_{1,j}B_{j,p}^{(1)}(\alpha)&\rd\quad\mbox{\footnotesize(interface condition: $C^0$ continuity at $x=\alpha$)}\\[5pt]
\displaystyle\sum_{j=1}^{n+p}u_{2,j}\int_{\Omega_2}B_{j,p}^{(2)\mbox{\normalsize$'$}}(x)B_{i,p}^{(2)\mbox{\normalsize$'$}}(x){\rm d}x=\int_{\Omega_2}f(x)B_{i,p}^{(2)}(x){\rm d}x,&\quad i=2,\ldots,n+p-1\ \ \mbox{\footnotesize(IgA approx.~in $\Omega_2$)}\\[13.5pt]
u_{2,n+p}=0 &\quad\mbox{\footnotesize(Dirichlet boundary condition at $x=1$)}
\end{cases}
\end{equation}
Actually, in the standard DDM approach, the boundary values $u_{1,1}$, $u_{2,n+p}$ are not considered as unknowns, because they are equal to $0$ due to the Dirichlet boundary conditions, and the interface values $\rd u_{1,n+p}$, $\rd u_{2,1}$ are not considered as unknowns either, because they are directly expressed in terms of the other variables through the red interface conditions as long as $B_{1,p}^{(2)}(\beta)=B_{n+p,p}^{(1)}(\alpha)=0$. From now on we assume that $n\ge\max(1-\alpha,\beta)/(\beta-\alpha)$ so that the supports of the B-splines $B_{1,p}^{(2)}$, $B_{n+p,p}^{(1)}$ are contained in $[\alpha,\beta]$ and, consequently, $B_{1,p}^{(2)}(\beta)=B_{n+p,p}^{(1)}(\alpha)=0$. Under this assumption, not considering $u_{1,1}$, $u_{2,n+p}$, $\rd u_{1,n+p}$, $\rd u_{2,1}$ as unknowns is equivalent to removing the first, last, red equations from the linear system~\eqref{forma_suprema} after replacing everywhere $u_{1,1}$, $u_{2,n+p}$ with $0$ and $\rd u_{1,n+p}$, $\rd u_{2,1}$ with $\rd\sum_{j=2}^{n+p-1}u_{2,j}B_{j,p}^{(2)}(\beta)$, $\rd\sum_{j=2}^{n+p-1}u_{1,j}B_{j,p}^{(1)}(\alpha)$, respectively.\,\footnote{\,Note that $\rd u_{1,n+p}$ is replaced everywhere with $\rd\sum_{j=2}^{n+p-1}u_{2,j}B_{j,p}^{(2)}(\beta)$ and not with $\rd\sum_{j=1}^{n+p}u_{2,j}B_{j,p}^{(2)}(\beta)$, because the Dirichlet boundary condition $u_{2,n+p}=0$ and the assumption $B_{1,p}^{(2)}(\beta)=0$ ``kill'' the first and last terms of the summation. A similar observation applies to $\rd u_{2,1}$.}
The resulting linear system is the following:
\begin{equation}\label{forma_suprema_ridotta}
\begin{cases}
\displaystyle\sum_{j=2}^{n+p-1}u_{1,j}\int_{\Omega_1}B_{j,p}^{(1)\mbox{\normalsize$'$}}(x)B_{i,p}^{(1)\mbox{\normalsize$'$}}(x){\rm d}x+\sum_{j=2}^{n+p-1}u_{2,j}B_{j,p}^{(2)}(\beta)\int_{\Omega_1}B_{n+p,p}^{(1)\mbox{\normalsize$'$}}(x)B_{i,p}^{(1)\mbox{\normalsize$'$}}(x){\rm d}x=\int_{\Omega_1}f(x)B_{i,p}^{(1)}(x){\rm d}x,\\[15pt]
i=2,\ldots,n+p-1\ \ \mbox{\footnotesize(IgA approx.~in $\Omega_1$)}\\[7.5pt]
\hline\\[-7.5pt]
\displaystyle\sum_{j=2}^{n+p-1}u_{1,j}B_{j,p}^{(1)}(\alpha)\int_{\Omega_2}B_{1,p}^{(2)\mbox{\normalsize$'$}}(x)B_{i,p}^{(2)\mbox{\normalsize$'$}}(x){\rm d}x+\sum_{j=2}^{n+p-1}u_{2,j}\int_{\Omega_2}B_{j,p}^{(2)\mbox{\normalsize$'$}}(x)B_{i,p}^{(2)\mbox{\normalsize$'$}}(x){\rm d}x=\int_{\Omega_2}f(x)B_{i,p}^{(2)}(x){\rm d}x,\\[15pt]
i=2,\ldots,n+p-1\ \ \mbox{\footnotesize(IgA approx.~in $\Omega_2$)}
\end{cases}
\end{equation}
For later convenience, we multiply the first $n+p-2$ equations in \eqref{forma_suprema_ridotta} by $\beta$ and the second $n+p-2$ equations in \eqref{forma_suprema_ridotta} by $1-\alpha$. In this way, we obtain the equivalent linear system
\begin{equation}\label{the_sys}
B_{n,p,\alpha,\beta}\left[\begin{array}{c}u_{1,2}\\\vdots\\ u_{1,n+p-1}\vphantom{\Big|}\\\hline u_{2,2}\vphantom{\Big|}\\\vdots\\ u_{2,n+p-1}\end{array}\right]=\boldsymbol f_{n,p,\alpha,\beta},
\end{equation}
where the DDM discretization matrix $B_{n,p,\alpha,\beta}$ and the right-hand side $\boldsymbol f_{n,p,\alpha,\beta}$ are given by
\begin{equation}\label{mat_rhs}
B_{n,p,\alpha,\beta}=\left[\begin{array}{c|c}\beta A_{n,p,\alpha,\beta}^{(1)} & \beta R_{n,p,\alpha,\beta}^{(1)} \vphantom{\Big|}\\ \hline (1-\alpha)R_{n,p,\alpha,\beta}^{(2)} & (1-\alpha)A_{n,p,\alpha,\beta}^{(2)}\vphantom{\Big|}\end{array}\right],\qquad\boldsymbol f_{n,p,\alpha,\beta}=\left[\begin{array}{c}
\beta\left[\displaystyle\int_{\Omega_1}f(x)B_{i,p}^{(1)}(x){\rm d}x\right]_{i=2}^{n+p-1}\vphantom{\Bigg|_{\int_0^1}}\\
\hline
(1-\alpha)\left[\displaystyle\int_{\Omega_2}f(x)B_{i,p}^{(2)}(x){\rm d}x\right]_{i=2}^{n+p-1}\vphantom{\Bigg|^{\int_0^1}}\end{array}\right],
\end{equation}
with
\begin{alignat*}{3}
A_{n,p,\alpha,\beta}^{(1)}&=\left[\int_{\Omega_1}B_{j,p}^{(1)\mbox{\normalsize$'$}}(x)B_{i,p}^{(1)\mbox{\normalsize$'$}}(x){\rm d}x\right]_{i,j=2}^{n+p-1},&\qquad R_{n,p,\alpha,\beta}^{(1)}&=\left[B_{j,p}^{(2)}(\beta)\int_{\Omega_1}B_{n+p,p}^{(1)\mbox{\normalsize$'$}}(x)B_{i,p}^{(1)\mbox{\normalsize$'$}}(x){\rm d}x\right]_{i,j=2}^{n+p-1},\\
R_{n,p,\alpha,\beta}^{(2)}&=\left[B_{j,p}^{(1)}(\alpha)\int_{\Omega_2}B_{1,p}^{(2)\mbox{\normalsize$'$}}(x)B_{i,p}^{(2)\mbox{\normalsize$'$}}(x){\rm d}x\right]_{i,j=2}^{n+p-1},&\qquad A_{n,p,\alpha,\beta}^{(2)}&=\left[\int_{\Omega_2}B_{j,p}^{(2)\mbox{\normalsize$'$}}(x)B_{i,p}^{(2)\mbox{\normalsize$'$}}(x){\rm d}x\right]_{i,j=2}^{n+p-1}.
\end{alignat*}
By the properties of B-splines, we have $B_{j,p}^{(1)}(x)=B_{j,p}(x/\beta)$ and $B_{j,p}^{(2)}(x)=B_{j,p}((x-\alpha)/(1-\alpha))$ for every $j=1,\ldots,n+p$, where $\{B_{1,p},\ldots,B_{n+p,p}\}$ is the B-spline basis of $\mathcal V_{n,p}$ considered in Example~\ref{exa5}. Moreover, $B_{j,p}^{(2)}(\beta)$ and $B_{j,p}^{(1)}(\alpha)$ are nonzero for at most $p+1$ indices $j$, hence the nonzero columns of $R_{n,p,\alpha,\beta}^{(1)}$ and $R_{n,p,\alpha,\beta}^{(2)}$ are at most $p+1$. Thus, denoting again by $A_{n,p}$ the DDM matrix of Example~\ref{exa5} appearing in \eqref{IgA-system}--\eqref{Anp-dec} and by $f_p$ the function \eqref{fp-symbol}, we have
\begin{alignat*}{3}
\beta A_{n,p,\alpha,\beta}^{(1)}&=A_{n,p}=n\,T_{n+p-2}(f_p)+R_{n,p},&\qquad{\rm rank}(R_{n,p})&\le4(p-1)=o(n),\\[3pt]
{\rm rank}(R_{n,p,\alpha,\beta}^{(1)})&\le p+1=o(n),\\[3pt]
{\rm rank}(R_{n,p,\alpha,\beta}^{(2)})&\le p+1=o(n),\\[3pt]
(1-\alpha)A_{n,p,\alpha,\beta}^{(2)}&=A_{n,p}=n\,T_{n+p-2}(f_p)+R_{n,p},&\qquad{\rm rank}(R_{n,p})&\le4(p-1)=o(n).
\end{alignat*}
It follows that
\begin{align*}
n^{-1}B_{n,p,\alpha,\beta}&=\left[\begin{array}{c|c}T_{n+p-2}(f_p) & O \vphantom{\Big|}\\ \hline O & T_{n+p-2}(f_p)\vphantom{\Big|}\end{array}\right]+\left[\begin{array}{c|c}n^{-1}R_{n,p} & \beta n^{-1}R_{n,p,\alpha,\beta}^{(1)} \vphantom{\Big|}\\ \hline (1-\alpha)n^{-1}R_{n,p,\alpha,\beta}^{(2)} & n^{-1}R_{n,p}\vphantom{\Big|}\end{array}\right]\\
&=T_{2(n+p-2)}(f_p)+S_{n,p,\alpha,\beta},
\end{align*}
where
\begin{align*}
S_{n,p,\alpha,\beta}&=\underbrace{\left[\begin{array}{c|c}T_{n+p-2}(f_p) & O \vphantom{\Big|}\\ \hline O & T_{n+p-2}(f_p)\vphantom{\Big|}\end{array}\right]-T_{2(n+p-2)}(f_p)}_{{\rm rank}\,\le\,2p}\,+\,\underbrace{\left[\begin{array}{c|c}n^{-1}R_{n,p} & \beta n^{-1}R_{n,p,\alpha,\beta}^{(1)} \vphantom{\Big|}\\ \hline (1-\alpha)n^{-1}R_{n,p,\alpha,\beta}^{(2)} & n^{-1}R_{n,p}\vphantom{\Big|}\end{array}\right]}_{{\rm rank}\,\le\,8(p-1)+2(p+1)},\\
{\rm rank}(S_{n,p,\alpha,\beta})&\le12p-6=o(n).
\end{align*}
Since $\{n^{-1}S_{n,p,\alpha,\beta}\}_n$ is a sequence of low-rank matrices (and hence it is zero-distributed), it follows from {\bf GLT2}--{\bf GLT3} that
\[ \{n^{-1}B_{n,p,\alpha,\beta}\}_n=\{T_{2(n+p-2)}(f_p)+S_{n,p,\alpha,\beta}\}_n\sim_{\rm GLT}f_p(\theta). \]
Note that the symbol $f_p(\theta)$ is the same as the symbol of the GLT sequence $\{n^{-1}A_{n,p}\}_n$ in Example~\ref{exa5} and, in particular, it is independent of $\alpha$ and $\beta$.
\end{example}

In Examples~\ref{exa3}--\ref{exa4}, we denote by $\ee_1,\ldots,\ee_d$ the vectors of the canonical basis of $\mathbb R^d$.

\begin{example}[\textbf{decomposition of $(0,1)^d$ into a unique subdomain -- FD discretization}]\label{exa3}
Let $d$ be a positive integer and consider the second-order boundary value problem
\begin{equation}\label{classic-p2}
\begin{cases}
-\Delta u(\xx)=f(\xx), &\quad\xx\in(0,1)^d,\\
u(\xx)=0,&\quad\xx\in\partial((0,1)^d).
\end{cases}
\end{equation}
Let $\nn\in\mathbb N^d$, set $\hh=(\nn+\bu)^{-1}$ and $\xx_\ii=\ii\hh$ for $\ii=\bz,\ldots,\nn+\bu$.
We decompose the domain $(0,1)^d$ into a unique subdomain equal to itself. Then, we look for an approximation $\tilde u$ of the solution $u$ over the unique subdomain $(0,1)^d$. To this end, we use the classical $(2d+1)$-point central FD approximation of the $d$-dimensional Laplace operator:
\begin{align*}
-\Delta u(\xx_\ii)=\sum_{k=1}^d-\frac{\partial^2u}{\partial x_k^2}(\xx_\ii)&\approx\sum_{k=1}^d\frac{-u(\xx_\ii-h_k\ee_k)+2u(\xx_\ii)-u(\xx_\ii+h_k\ee_k)}{h_k^2}\\
&=\sum_{k=1}^d\frac{-u(\xx_{\ii-\ee_k})+2u(\xx_\ii)-u(\xx_{\ii+\ee_k})}{h_k^2},\qquad\ii=\bu,\ldots,\nn.
\end{align*}
This means that the values of the solution $u$ at the points $\xx_\ii$, $\ii=\bu,\ldots,\nn$, satisfy (approximately) the following linear system:
\begin{equation*}
\sum_{k=1}^d\frac{-u(\xx_{\ii-\ee_k})+2u(\xx_\ii)-u(\xx_{\ii+\ee_k})}{h_k^2}=f(\xx_\ii),\qquad\ii=\bu,\ldots,\nn.
\end{equation*}
We then approximate the nodal value $u(\xx_\ii)$ with the value $u_\ii$ for $\ii=\bz,\ldots,\nn+\bu$, where $u_\ii=0$ for $\ii\not\in\{\bu,\ldots,\nn\}$ due to the Dirichlet boundary conditions and the vector $(u_\bu,\ldots,u_\nn)$ solves the linear system
\begin{equation}\label{i-form2}
\sum_{k=1}^d\frac{-u_{\ii-\ee_k}+2u_\ii-u_{\ii+\ee_k}}{h_k^2}=f(\xx_\ii),\qquad\ii=\bu,\ldots,\nn.
\end{equation}
The approximation $\tilde u$ of the solution $u$ is defined as any function on $[0,1]$ such that $\tilde u(\xx_\ii)=u_\ii$ for every $\ii=\bz,\ldots,\nn+\bu$.
The matrix of the linear system \eqref{i-form2}, which is the DDM discretization matrix of this example, is a $d$-level matrix given by
\begin{equation*}
A_\nn=\sum_{k=1}^d\frac1{h_k^2}T_\nn(2-2\cos\theta_k),
\end{equation*}
where $2-2\cos\theta_k$ is a $d$-variate function in the variables $\btheta=(\theta_1,\ldots,\theta_d)\in[-\pi,\pi]^d$ that depends only on the $k$th variable $\theta_k$; see \cite[Section~7.3]{GLTbookII} for details.

Suppose now that $\nn+\bu=\bnu n$ for some positive integer (mesh fineness parameter) $n$ and for a fixed vector $\bnu$ independent of $n$.
It is understood that $\bnu\in\mathbb Q^d$, $\bnu>\bz$, and $n$ belongs to the set of positive integers such that $\nn+\bu=\bnu n\in\mathbb N^d$.
This assumption, which is usually satisfied in practice, says that each mesh size parameter $h_k=(n_k+1)^{-1}=(\nu_kn)^{-1}$ tends to $0$ as $n\to\infty$ with the same asymptotic speed as the others. In a common scenario, one normally takes $\bnu=\bu$, i.e., $\nn+\bu=(n,\ldots,n)$, which means that the mesh size parameter $h_k=n^{-1}$ is the same in each direction $k=1,\ldots,d$.
By {\bf GLT2}--{\bf GLT3}, we have
\[ \{n^{-2}A_\nn\}_n=\left\{\sum_{k=1}^d\nu_k^2\hspace{1pt}T_\nn(2-2\cos\theta_k)\right\}_n\sim_{\rm GLT}\sum_{k=1}^d\nu_k^2(2-2\cos\theta_k). \]
\end{example}

\begin{example}[\textbf{decomposition of $(0,1)^d$ into two subdomains -- FD discretization}]\label{exa4}

\begin{figure}
\centering
\includegraphics[width=0.5\textwidth]{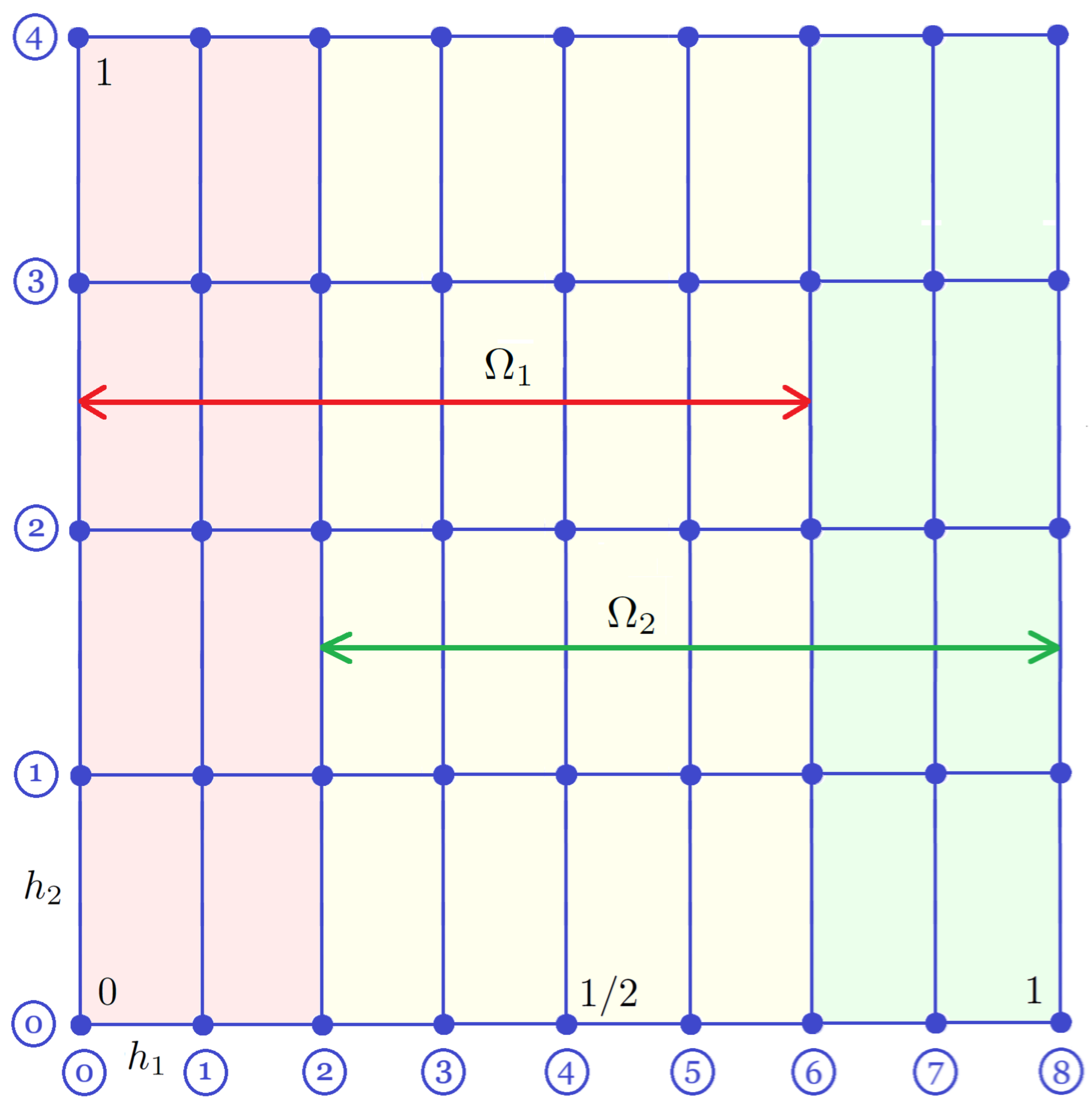}
\caption{Illustration for Example~\ref{exa4} in the case $d=2$, $\nn=(n_1,n_2)=(7,3)$ and $v=2$.}
\label{DD2d}
\end{figure}

Consider again problem \eqref{classic-p2}, let $\nn\in\mathbb N^d$, set $\hh=(\nn+\bu)^{-1}$ and $\xx_\ii=\ii\hh$ for $\ii=\bz,\ldots,\nn+\bu$. We decompose the domain $(0,1)^d$ into two subdomains $\Omega_1=\Omega_{1,\nn}=(0,1/2+vh_1)\times(0,1)^{d-1}$ and $\Omega_2=\Omega_{2,\nn}=(1/2-vh_1,1)\times(0,1)^{d-1}$, where $v\le(n_1+1)/2$ is a non-negative integer that determines the size of the overlap; see Figure~\ref{DD2d}.
In practice, the overlap $(1/2-vh_1,1/2+vh_1)\times(0,1)^{d-1}$ consists of $2v(n_2+1)\cdots(n_d+1)$ mesh cells. As in Example~\ref{exa2}, depending on $v$, the overlap may or may not vanish in the limit of mesh refinement $\nn\to\infty$.

Let us assume that $n_1$ is odd as in Figure~\ref{DD2d}, so that the $(d-1)$-dimensional hyperplane $\{\xx\in\mathbb R^d:x_1=1/2\}$ contains a side of the mesh.
Then, we look for two approximations of the solution $u$:
\begin{itemize}[nolistsep,leftmargin=*]
	\item an approximation $\tilde u_1$ of $u_1=u\big|_{\Omega_1}$ defined over the subdomain $\Omega_1$;
	\item an approximation $\tilde u_2$ of $u_2=u\big|_{\Omega_2}$ defined over the subdomain $\Omega_2$.
\end{itemize}
Note that the set of grid points in $\overline\Omega_1$ is given by $\{\xx_\ii:\ii\in\widehat\Omega_1\}$ with $\widehat\Omega_1=\{\bz,\ldots,((n_1+1)/2+v,n_2+1,\ldots,n_d+1)\}$ and the set of grid points in $\overline\Omega_2$ is given by $\{\xx_\ii:\ii\in\widehat\Omega_2\}$ with $\widehat\Omega_2=\{((n_1+1)/2-v,0,\ldots,0),\ldots,\nn+\bu\}$, and define
\begin{alignat}{3}
\tilde u_1(\xx_\ii)&=u_{1,\ii}, &\qquad\ii&\in\widehat\Omega_1=\left\{\bz,\ldots,\left(\dfrac{n_1+1}2+v,n_2+1,\ldots,n_d+1\right)\right\},\label{uk1'}\\
\tilde u_2(\xx_\ii)&=u_{2,\ii}, &\qquad\ii&\in\widehat\Omega_2=\left\{\left(\dfrac{n_1+1}2-v,0,\ldots,0\right),\ldots,\nn+\bu\right\}.\label{uk2'}
\end{alignat}
Determining $\tilde u_1$ and $\tilde u_2$ is equivalent to determining the $(n_1+2v+3)(n_2+1)\cdots(n_d+1)$ unknowns \eqref{uk1'}--\eqref{uk2'}. Once this is done, $\tilde u_1$ is defined as any function on $\overline\Omega_1=[0,1/2+vh_1]\times[0,1]^{d-1}$ satisfying \eqref{uk1'}, $\tilde u_2$ is defined as any function on $\overline\Omega_2=[1/2-vh_1,1]\times[0,1]^{d-1}$ satisfying \eqref{uk2'}, and the global approximation $\tilde u$ of the solution $u$ over the whole domain $(0,1)^d$ is defined as
\begin{equation*}
\tilde u(\xx)=\begin{cases}
\tilde u_1(\xx), &\quad\xx\in[0,1/2-vh_1)\times[0,1]^{d-1},\\[3pt]
\tilde u_2(\xx), &\quad\xx\in(1/2+vh_1,1]\times[0,1]^{d-1},\\[3pt]
\alpha_1\tilde u_1(\xx)+\alpha_2\tilde u_2(\xx), &\quad\xx\in[1/2-vh_1,1/2+vh_1]\times[0,1]^{d-1},
\end{cases}
\end{equation*}
where $\alpha_1\tilde u_1(\xx)+\alpha_2\tilde u_2(\xx)$ is a convex combination of $\tilde u_1(\xx)$ and $\tilde u_2(\xx)$, i.e., $\alpha_1,\alpha_2\ge0$ and $\alpha_1+\alpha_2=1$.

There are several ways to determine $\tilde u_1$ and $\tilde u_2$, i.e., the $(n_1+2v+3)(n_2+2)\cdots(n_d+2)$ unknowns \eqref{uk1'}--\eqref{uk2'}.
In this example, we propose a way that works in the overlapping case $v\ne0$.
Specifically, we use the classical $(2d+1)$-point central FD approximation of the $d$-dimensional Laplace operator at the $((n_1+1)/2+v-2)n_2\cdots n_d$ points $x_\ii$, $\ii=\bu,\ldots,((n_1+1)/2+v-2,n_2,\ldots,n_d)$, lying inside $\Omega_1$ and at the $((n_1+1)/2+v-2)n_2\cdots n_d$ points $x_\ii$, $\ii=((n_1+1)/2-v+2,1,\ldots,1),\ldots,\nn$, lying inside $\Omega_2$. In addition, we impose the Dirichlet boundary conditions 
and two additional conditions at each of the $2n_2\cdots n_d$ interface points $\xx_\ii$, $\ii\in\{\bu,\ldots,\nn\}\cap\{\xx\in\mathbb R^d:x_1=1/2\pm vh_1\}=\{(i_1,1,\ldots,1),\ldots,(i_1,n_2,\ldots,n_d):i_1=(n_1+1)/2\pm v\}$, in order to make the number of equations equal to the number $(n_1+2v+3)(n_2+2)\cdots(n_d+2)$ of unknowns.
The resulting linear system is the following:
\begin{equation}\label{i-form''}
\begin{cases}
u_{1,\ii}=0 &\quad\ii\in(\{\bz,\ldots,\nn+\bu\}\setminus\{\bu,\ldots,\nn\})\cap\widehat\Omega_1\\[7.5pt]
\displaystyle\sum_{k=1}^d\frac{-u_{1,\ii-\ee_k}+2u_{1,\ii}-u_{1,\ii+\ee_k}}{h_k^2}=f(\xx_\ii) &\quad\ii=\bu,\ldots,\left(\dfrac{n_1+1}2+v-2,n_2,\ldots,n_d\right)\\[7.5pt]
\rd u_{1,\ii}=u_{2,\ii}&\rd\quad\ii=\left(\dfrac{n_1+1}2+v,1,\ldots,1\right),\ldots\left(\dfrac{n_1+1}2+v,n_2,\ldots,n_d\right)\\[7.5pt]
\bl u_{1,\ii}-u_{1,\ii-\ee_1}=u_{2,\ii+\ee_1}-u_{2,\ii}&\bl\quad\ii=\left(\dfrac{n_1+1}2+v,1,\ldots,1\right),\ldots\left(\dfrac{n_1+1}2+v,n_2,\ldots,n_d\right)\\[10pt]
\hline\\[-10pt]
\rd u_{2,\ii}=u_{1,\ii}&\rd\quad\ii=\left(\dfrac{n_1+1}2-v,1,\ldots,1\right),\ldots\left(\dfrac{n_1+1}2-v,n_2,\ldots,n_d\right)\\[7.5pt]
\bl u_{2,\ii+\ee_1}-u_{2,\ii}=u_{1,\ii}-u_{1,\ii-\ee_1}&\bl\quad\ii=\left(\dfrac{n_1+1}2-v,1,\ldots,1\right),\ldots\left(\dfrac{n_1+1}2-v,n_2,\ldots,n_d\right)\\[7.5pt]
\displaystyle\sum_{k=1}^d\frac{-u_{2,\ii-\ee_k}+2u_{2,\ii}-u_{2,\ii+\ee_k}}{h_k^2}=f(\xx_\ii) &\quad\ii=\left(\dfrac{n_1+1}2-v+2,1,\ldots,1\right),\ldots,\nn\\[15pt]
u_{2,\ii}=0 &\quad\ii\in(\{\bz,\ldots,\nn+\bu\}\setminus\{\bu,\ldots,\nn\})\cap\widehat\Omega_2
\end{cases}
\end{equation}
In the standard DDM approach, the values
\begin{alignat}{3}
&u_{1,\ii}, &\qquad\ii&\in(\{\bz,\ldots,\nn+\bu\}\setminus\{\bu,\ldots,\nn\})\cap\widehat\Omega_1,\label{b1}\\
&\rd u_{1,\ii}, &\rd\qquad\ii&\rd=\left(\dfrac{n_1+1}2+v,1,\ldots,1\right),\ldots\left(\dfrac{n_1+1}2+v,n_2,\ldots,n_d\right),\label{r1}\\
&\rd u_{2,\ii}, &\rd\qquad\ii&\rd=\left(\dfrac{n_1+1}2-v,1,\ldots,1\right),\ldots\left(\dfrac{n_1+1}2-v,n_2,\ldots,n_d\right),\label{r2}\\
&u_{2,\ii}, &\qquad\ii&\in(\{\bz,\ldots,\nn+\bu\}\setminus\{\bu,\ldots,\nn\})\cap\widehat\Omega_2,\label{b2}
\end{alignat}
are not considered as unknowns, since they are specified by the Dirichlet boundary conditions and the red interface conditions. Not considering \eqref{b1}--\eqref{b2} as unknowns is equivalent to removing the first, last, red equations from the linear system \eqref{i-form''} after replacing everywhere \eqref{b1}--\eqref{b2} with their values specified by the first, last, red equations. The resulting linear system is the following:
\begin{equation}\label{i-form'-reduced'}
\begin{cases}
\displaystyle\sum_{k=1}^d\frac{-u_{1,\ii-\ee_k}+2u_{1,\ii}-u_{1,\ii+\ee_k}}{h_k^2}=f(\xx_\ii) &\quad\ii=\bu,\ldots,\left(\dfrac{n_1+1}2+v-2,n_2,\ldots,n_d\right)\\[11pt]
\bl-u_{1,\ii-\ee_1}+2u_{2,\ii}-u_{2,\ii+\ee_1}=0&\bl\quad\ii=\left(\dfrac{n_1+1}2+v,1,\ldots,1\right),\ldots\left(\dfrac{n_1+1}2+v,n_2,\ldots,n_d\right)\\[10pt]
\hline\\[-10pt]
\bl-u_{1,\ii-\ee_1}+2u_{1,\ii}-u_{2,\ii+\ee_1}=0&\bl\quad\ii=\left(\dfrac{n_1+1}2-v,1,\ldots,1\right),\ldots\left(\dfrac{n_1+1}2-v,n_2,\ldots,n_d\right)\\[7.5pt]
\displaystyle\sum_{k=1}^d\frac{-u_{2,\ii-\ee_k}+2u_{2,\ii}-u_{2,\ii+\ee_k}}{h_k^2}=f(\xx_\ii) &\quad\ii=\left(\dfrac{n_1+1}2-v+2,1,\ldots,1\right),\ldots,\nn
\end{cases}
\end{equation}
The matrix of the linear system \eqref{i-form'-reduced'}, which is the DDM discretization matrix of this example, is given by
\begin{equation*}
A_\nn=\sum_{k=1}^d\frac1{h_k^2}T_{\nn+(2v-1)\ee_1}(2-2\cos\theta_k)+R_\nn,\qquad{\rm rank}(R_\nn)\le2n_2\cdots n_d,
\end{equation*}
where $2-2\cos\theta_k$ is a $d$-variate function in the variables $\btheta=(\theta_1,\ldots,\theta_d)\in[-\pi,\pi]^d$ that depends only on the $k$th variable $\theta_k$, as in Example~\ref{exa3}.

Suppose now that $\nn+\bu=\bnu n$ as in Example~\ref{exa3}. Then,
\begin{align*}
{\rm size}(R_\nn)&={\rm size}(A_\nn)=(n_1+2v-1)n_2\cdots n_d\ge(n_1-1)n_2\cdots n_d\sim\nu_1\nu_2\cdots\nu_dn^d,\\
{\rm rank}(R_\nn)&\le2n_2\cdots n_d\sim2\nu_2\cdots\nu_d n^{d-1}=o(n^d).
\end{align*}
It follows that $\{n^{-2}R_\nn\}_n$ is a sequence of low-rank matrices and hence it is zero-distributed. Thus, by {\bf GLT2}--{\bf GLT3},
\[ \{n^{-2}A_\nn\}_n=\left\{\sum_{k=1}^d\nu_k^2\hspace{1pt}T_\nn(2-2\cos\theta_k)+n^{-2}R_\nn\right\}_n\sim_{\rm GLT}\sum_{k=1}^d\nu_k^2(2-2\cos\theta_k). \]
\end{example}


\begin{remark}\label{overlap-cases}
In the examples of this section, we have considered DDM discretizations where the subdomain overlap may or may not vanish in the limit of mesh refinement $n\to\infty$.
In most cases, the subdomain overlap consists of a few mesh cells---as it happens in Examples~\ref{exa2} and~\ref{exa4} for a fixed $v$ independent of $n$---and therefore it vanishes in the limit of mesh refinement. The choice of a subdomain overlap that vanishes as $n\to\infty$ is usually made in order to keep to a minimum the computational cost of computing twice the solution on the overlap region.
However, there are important applications where one or more subdomain overlaps do not vanish in the limit of mesh refinement.
This happens, for instance, when dealing with domain truncation via perfectly matched layers~\cite{Ber94} or absorbing boundary conditions~\cite{EM77}; see, e.g., the model problem in \cite[Section~2]{gander_jakabcin_outrata}.
\end{remark}

\section{BJ/BGS/AS/MS preconditioners}\label{bJ/bGS/AS/MS-def-e}

In this section, we formally define the BJ/BGS/AS/MS preconditioners for multilevel block matrices.
The definitions provided herein, as well as the associated notations, are formulated on an abstract level and apply to arbitrary multilevel block matrices, not necessarily arising from numerical discretizations. Such definitions and notations, which are inspired by the theory of reduced GLT sequences \cite{rg}, are proposed as alternatives to those commonly used by the DDM community. In order for the reader to become familiar with them, a number of illustrative examples is provided.

We begin by introducing zeroing, restriction and expansion operators, which are completely analogous to those appearing within the theory of reduced GLT sequences \cite[Section~4]{rg}. As in the case of multigrid methods, one may use ``cutting'' or ``projection'' as alternatives to ``restriction'', and ``extension'' or ``prolongation'' as alternatives to ``expansion''. Throughout this paper, however, we only use ``restriction'' and ``expansion''.
In what follows, the characteristic (indicator) function of a set $\Omega$ is denoted by $\chi_\Omega$.

\begin{definition}[\textbf{restriction matrix}]\label{R-matrix}
Let $d,s$ be positive integers, let $\nn\in\mathbb N^d$, and let $\Omega\subseteq[0,1]^d$. The restriction matrix $R_\Omega^{\nn,s}$ is defined as the $N_\nn^\Omega s\times N(\nn)s$ matrix obtained from $D_\nn(\chi_\Omega I_s)$ by removing the zero (block) rows, i.e., the (block) rows corresponding to multi-indices $\ii\not\in\II_\nn^\Omega$:
\[ R_\Omega^{\nn,s}=[(\ee_\ii^{(\nn)})^T\otimes I_s]_{\ii\in\II_\nn^\Omega}=[(\ee_\ii^{(\nn)})^T]_{\ii\in\II_\nn^\Omega}\otimes I_s=R_\Omega^{\nn,1}\otimes I_s. \]
\end{definition}

A few basic properties of restriction matrices are collected in Lemma~\ref{r-lemma}.

\begin{lemma}\label{r-lemma}
Let $d,s$ be positive integers, let $\nn\in\mathbb N^d$, and let $\Omega\subseteq[0,1]^d$. Then,
\begin{equation}\label{basicReq}
R_\Omega^{\nn,s}(R_\Omega^{\nn,s})^T=I_{N_\nn^\Omega s},\qquad(R_\Omega^{\nn,s})^TR_\Omega^{\nn,s}=D_\nn(\chi_\Omega I_s),
\end{equation}
and
\begin{equation}\label{basicReq'}
R_\Omega^{\nn,s}D_\nn(\chi_\Omega I_s)=R_\Omega^{\nn,s},\qquad D_\nn(\chi_\Omega I_s)(R_\Omega^{\nn,s})^T=(R_\Omega^{\nn,s})^T.
\end{equation}
\end{lemma}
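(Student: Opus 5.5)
The plan is to reduce everything to the scalar case $s=1$ and then compute entrywise with the canonical basis vectors $\ee_\ii^{(\nn)}$. By Definition~\ref{R-matrix}, $R_\Omega^{\nn,s}=R_\Omega^{\nn,1}\otimes I_s$. Using the mixed-product property of tensor products and $(X\otimes I_s)^T=X^T\otimes I_s$, we get
\[ R_\Omega^{\nn,s}(R_\Omega^{\nn,s})^T=R_\Omega^{\nn,1}(R_\Omega^{\nn,1})^T\otimes I_s,\qquad(R_\Omega^{\nn,s})^TR_\Omega^{\nn,s}=(R_\Omega^{\nn,1})^TR_\Omega^{\nn,1}\otimes I_s. \]
Moreover, $D_\nn(\chi_\Omega I_s)=D_\nn(\chi_\Omega)\otimes I_s$, since both are block diagonal with $\ii$th block $\chi_\Omega(\ii/\nn)I_s$. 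So it suffices to prove all four identities for $s=1$; tensoring with $I_s$ then gives the general case, because $I_{N_\nn^\Omega}\otimes I_s=I_{N_\nn^\Omega s}$.

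For $s=1$, write $R=R_\Omega^{\nn,1}$. Its rows are $(\ee_\ii^{(\nn)})^T$ for $\ii\in\II_\nn^\Omega$, listed in lexicographic order. The $(\ii,\jj)$ entry of $RR^T$, for $\ii,\jj\in\II_\nn^\Omega$, is $(\ee_\ii^{(\nn)})^T\ee_\jj^{(\nn)}=\delta_{\ii\jj}$, so $RR^T=I_{N_\nn^\Omega}$. Next, $R^TR=\sum_{\ii\in\II_\nn^\Omega}\ee_\ii^{(\nn)}(\ee_\ii^{(\nn)})^T$. This is the diagonal matrix with entry $1$ in position $\ii$ exactly when $\ii\in\II_\nn^\Omega$, that is, exactly when $\chi_\Omega(\ii/\nn)=1$. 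Hence $R^TR=D_\nn(\chi_\Omega)$. This proves \eqref{basicReq}.

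For \eqref{basicReq'}, we combine the two identities just proved rather than computing again:
\[ R\,D_\nn(\chi_\Omega)=R(R^TR)=(RR^T)R=R, \]
and the second identity in \eqref{basicReq'} follows by transposition, since $D_\nn(\chi_\Omega)$ is real diagonal and hence symmetric.

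No step is a genuine obstacle. The only care needed is bookkeeping: checking that the block-row removal defining $R_\Omega^{\nn,s}$ matches $R_\Omega^{\nn,1}\otimes I_s$ under the lexicographic ordering, which holds because each block row consists of the $s$ consecutive scalar rows indexed by $\ii$.
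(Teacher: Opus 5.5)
Your proof is correct and takes essentially the same route as the paper. You reduce to $s=1$ via $R_\Omega^{\nn,s}=R_\Omega^{\nn,1}\otimes I_s$ and the mixed-product property, compute $RR^T$ and $R^TR$ from the canonical basis vectors, and then obtain \eqref{basicReq'} from $RR^TR=R$. The only cosmetic difference is that you derive the second identity in \eqref{basicReq'} by transposing the first, using that $D_\nn(\chi_\Omega I_s)$ is real diagonal. The paper instead right-multiplies $(R_\Omega^{\nn,s})^TR_\Omega^{\nn,s}=D_\nn(\chi_\Omega I_s)$ by $(R_\Omega^{\nn,s})^T$.
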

\begin{proof}
The first equation in \eqref{basicReq} follows from the definition of $R_\Omega^{\nn,s}$ and from standard block matrix multiplications.
Alternatively, we can prove it by using the properties of tensor products:
\begin{align*}
R_\Omega^{\nn,s}(R_\Omega^{\nn,s})^T&=(R_\Omega^{\nn,1}\otimes I_s)(R_\Omega^{\nn,1}\otimes I_s)^T=(R_\Omega^{\nn,1}\otimes I_s)((R_\Omega^{\nn,1})^T\otimes I_s)=R_\Omega^{\nn,1}(R_\Omega^{\nn,1})^T\otimes I_s\\
&=[(\ee_\ii^{(\nn)})^T]_{\ii\in\II_\nn^\Omega}[\ee_\jj^{(\nn)}]^{\jj\in\II_\nn^\Omega}\otimes I_s=[\delta_{\ii\jj}]_{\ii,\jj\in\II_\nn^\Omega}\otimes I_s=I_{N_\nn^\Omega}\otimes I_s=I_{N_\nn^\Omega s}.
\end{align*}
Similarly, we can prove the second equation in \eqref{basicReq}: 
\begin{align*}
(R_\Omega^{\nn,s})^TR_\Omega^{\nn,s}&=(R_\Omega^{\nn,1}\otimes I_s)^T(R_\Omega^{\nn,1}\otimes I_s)=((R_\Omega^{\nn,1})^T\otimes I_s)(R_\Omega^{\nn,1}\otimes I_s)=(R_\Omega^{\nn,1})^TR_\Omega^{\nn,1}\otimes I_s\\
&=[(\ee_\jj^{(\nn)})]^{\jj\in\II_\nn^\Omega}[(\ee_\ii^{(\nn)})^T]_{\ii\in\II_\nn^\Omega}\otimes I_s=\Biggl(\,\sum_{\ii\in\II_\nn^\Omega}\underbrace{\ee_\ii^{(\nn)}(\ee_\ii^{(\nn)})^T}_{\substack{\textup{matrix with $1$}\\\textup{in position $(\ii,\ii)$}\\\textup{and $0$ elsewhere}}}\Biggr)\otimes I_s=D_\nn(\chi_\Omega)\otimes I_s=D_\nn(\chi_\Omega I_s).
\end{align*}
The first equation in \eqref{basicReq'} is obtained by right-multiplying the first equation in \eqref{basicReq} by $R_\Omega^{\nn,s}$ and by using the second equation in \eqref{basicReq}.
The second equation in \eqref{basicReq'} is obtained by right-multiplying the second equation in \eqref{basicReq} by $(R_\Omega^{\nn,s})^T$ and by using the first equation in \eqref{basicReq}.
\end{proof}

We are now ready to define zeroing, restriction and expansion operators.

\begin{definition}[\textbf{zeroing, restriction and expansion operators}]\label{ZRE-operators}
Let $d,s,t$ be positive integers, let $\nn\in\mathbb N^d$, and let $\Omega_L,\Omega_R\subseteq[0,1]^d$.
\begin{itemize}[nolistsep,leftmargin=*]
	\item We define the zeroing operator $Z^{\nn,s,t}_{\Omega_L,\Omega_R}$ as follows:
	\[ Z^{\nn,s,t}_{\Omega_L,\Omega_R}:\mathbb C^{N(\nn)s\times N(\nn)t}\to\mathbb C^{N(\nn)s\times N(\nn)t},\qquad Z_{\Omega_L,\Omega_R}^{\nn,s,t}(A)=D_\nn(\chi_{\Omega_L}I_s)\,A\,D_\nn(\chi_{\Omega_R}I_t). \]
	In other words, if $A=[A_{\ii\jj}]_{\ii,\jj=\bu}^\nn\in\mathbb C^{N(\nn)s\times N(\nn)t}$ is a multilevel block matrix with blocks $A_{\ii\jj}\in\mathbb C^{s\times t}$ for every $\ii,\jj=\bu,\ldots,\nn$, then the $s\times t$ block in position $(\ii,\jj)$ of the multilevel block matrix $Z_{\Omega_L,\Omega_R}^{\nn,s,t}(A)$ is given by
	\begin{align*}	(Z_{\Omega_L,\Omega_R}^{\nn,s,t}(A))_{\ii\jj}&=(D_\nn(\chi_{\Omega_L}I_s)\,A\,D_\nn(\chi_{\Omega_R}I_t))_{\ii\jj}=(D_\nn(\chi_{\Omega_L}I_s))_{\ii\ii}\,A_{\ii\jj}\,(D_\nn(\chi_{\Omega_R}I_t))_{\jj\jj}\\	&=\chi_{\Omega_L}\!\Bigl(\frac\ii\nn\Bigr)I_s\,A_{\ii\jj}\,\chi_{\Omega_R}\!\Bigl(\frac\jj\nn\Bigr)I_t=\chi_{\Omega_L}\!\Bigl(\frac\ii\nn\Bigr)\chi_{\Omega_R}\!\Bigl(\frac\jj\nn\Bigr)A_{\ii\jj}=\begin{cases}
	A_{\ii\jj}, &\mbox{if $\ii\in\II_\nn^{\Omega_L}$ and $\jj\in\II_\nn^{\Omega_R}$},\\
	O_{s,t}, &\mbox{otherwise}.
	\end{cases}
	\end{align*}
	In practice, the action of the operator $Z_{\Omega_L,\Omega_R}^{\nn,s,t}$ on a multilevel block matrix $A$ of size $N(\nn)s\times N(\nn)t$ consists in zeroing out all (block) rows of $A$ corresponding to multi-indices $\ii\not\in\II_\nn^{\Omega_L}$ and all (block) columns of $A$ corresponding to multi-indices $\jj\not\in\II_\nn^{\Omega_R}$.
	\item We define the restriction operator $R^{\nn,s,t}_{\Omega_L,\Omega_R}$ as follows:
	\[ R^{\nn,s,t}_{\Omega_L,\Omega_R}:\mathbb C^{N(\nn)s\times N(\nn)t}\to\mathbb C^{N_\nn^{\Omega_L}s\times N_\nn^{\Omega_R}t},\qquad R_{\Omega_L,\Omega_R}^{\nn,s,t}(A)=R^{\nn,s}_{\Omega_L}\,A\,(R^{\nn,t}_{\Omega_R})^T. \]
	In other words, if $A=[A_{\ii\jj}]_{\ii,\jj=\bu}^\nn\in\mathbb C^{N(\nn)s\times N(\nn)t}$ is a multilevel block matrix with blocks $A_{\ii\jj}\in\mathbb C^{s\times t}$ for every $\ii,\jj=\bu,\ldots,\nn$, then
	\begin{align*}
	R_{\Omega_L,\Omega_R}^{\nn,s,t}(A)&=R^{\nn,s}_{\Omega_L}\,A\,(R^{\nn,t}_{\Omega_R})^T=[(\ee_\ii^{(\nn)})^T\otimes I_s]_{\ii\in\II_\nn^{\Omega_L}}[A_{\ii\jj}]_{\ii,\jj=\bu}^\nn[\ee_\jj^{(\nn)}\otimes I_t]^{\jj\in\II_\nn^{\Omega_R}}=[A_{\ii\jj}]_{\ii\in\II_\nn^{\Omega_L}}^{\jj\in\II_\nn^{\Omega_R}}.
	\end{align*}
	In practice, the action of the operator $R_{\Omega_L,\Omega_R}^{\nn,s,t}$ on a multilevel block matrix $A$ of size $N(\nn)s\times N(\nn)t$ consists in removing all (block) rows of $A$ corresponding to multi-indices $\ii\not\in\II_\nn^{\Omega_L}$ and all (block) columns of $A$ corresponding to multi-indices $\jj\not\in\II_\nn^{\Omega_R}$.
	\item We define the expansion operator $E^{\nn,s,t}_{\Omega_L,\Omega_R}$ as follows:
	\[ E^{\nn,s,t}_{\Omega_L,\Omega_R}:\mathbb C^{N_\nn^{\Omega_L}s\times N_\nn^{\Omega_R}t}\to\mathbb C^{N(\nn)s\times N(\nn)t},\qquad E_{\Omega_L,\Omega_R}^{\nn,s,t}(B)=(R^{\nn,s}_{\Omega_L})^T\,B\,R^{\nn,t}_{\Omega_R}. \]
	In other words, if
	\[ B=[B_{\ii\jj}]_{\ii\in\II_\nn^{\Omega_L}}^{\jj\in\II_\nn^{\Omega_R}}\in\mathbb C^{N_\nn^{\Omega_L}s\times N_\nn^{\Omega_R}t} \]
	is a block matrix whose (block) entries $B_{\ii\jj}\in\mathbb C^{s\times t}$ are indexed by two multi-indices $\ii\in\II_\nn^{\Omega_L}$ and $\jj\in\II_\nn^{\Omega_R}$, then
	\begin{align*}
	E_{\Omega_L,\Omega_R}^{\nn,s,t}(B)&=(R^{\nn,s}_{\Omega_L})^T\,B\,R^{\nn,t}_{\Omega_R}=[\ee_\ii^{(\nn)}\otimes I_s]^{\ii\in\II_\nn^{\Omega_L}}[B_{\ii\jj}]_{\ii\in\II_\nn^{\Omega_L}}^{\jj\in\II_\nn^{\Omega_R}}[(\ee_\jj^{(\nn)})^T\otimes I_t]_{\jj\in\II_\nn^{\Omega_R}}\\
	&=\sum_{\substack{\ii\in\II_\nn^{\Omega_L}\\[1pt]\jj\in\II_\nn^{\Omega_R}}}(\ee_\ii^{(\nn)}\otimes I_s)B_{\ii\jj}((\ee_\jj^{(\nn)})^T\otimes I_t)=\sum_{\substack{\ii\in\II_\nn^{\Omega_L}\\[1pt]\jj\in\II_\nn^{\Omega_R}}}(\ee_\ii^{(\nn)}\otimes I_s)(1\otimes B_{\ii\jj})((\ee_\jj^{(\nn)})^T\otimes I_t)\\
	&=\sum_{\substack{\ii\in\II_\nn^{\Omega_L}\\[1pt]\jj\in\II_\nn^{\Omega_R}}}\underbrace{\ee_\ii^{(\nn)}(\ee_\jj^{(\nn)})^T}_{\substack{\textup{matrix with $1$}\\\textup{in position $(\ii,\jj)$}\\\textup{and $0$ elsewhere}}}{}\otimes B_{\ii\jj},
	\end{align*}
	which means that $E_{\Omega_L,\Omega_R}^{\nn,s,t}(B)=[\underbrace{(E_{\Omega_L,\Omega_R}^{\nn,s,t}(B))_{\ii\jj}}_{\substack{\textup{$s\times t$ block in}\\\textup{position $(\ii,\jj$)}}}]_{\ii,\jj=\bu}^\nn$, where, for every $\ii,\jj=\bu,\ldots,\nn$,
	\[ (E_{\Omega_L,\Omega_R}^{\nn,s,t}(B))_{\ii\jj}=\begin{cases}
	B_{\ii\jj}, &\mbox{if $\ii\in\II_\nn^{\Omega_L}$ and $\jj\in\II_\nn^{\Omega_R}$},\\
	O_{s,t}, &\mbox{if $\ii\not\in\II_\nn^{\Omega_L}$ or $\jj\not\in\II_\nn^{\Omega_R}$}.\end{cases} \]
	In practice, the action of the operator $E_{\Omega_L,\Omega_R}^{\nn,s,t}$ on a block matrix $B$ of size $N_\nn^{\Omega_L}s\times N_\nn^{\Omega_R}t$ consists in expanding $B$ to a $N(\nn)s\times N(\nn)t$ matrix by adding zero (block) rows in correspondence with multi-indices $\ii\not\in\II_\nn^{\Omega_L}$ and zero (block) columns in correspondence with multi-indices $\jj\not\in\II_\nn^{\Omega_R}$.
\end{itemize}
\end{definition}
 
Example~\ref{illustration} provides an illustration of Definition~\ref{ZRE-operators} in the 1-level scalar case $d=s=t=1$.

\begin{example}\label{illustration}

\begin{figure}
\centering
\includegraphics[width=0.6\textwidth]{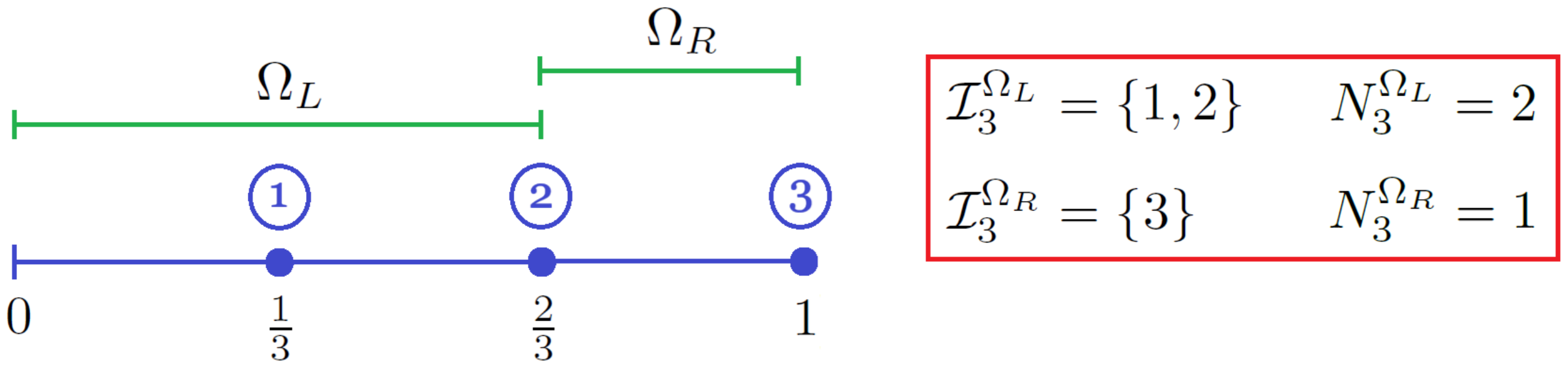}
\caption{Illustration for Example~\ref{illustration}.}
\label{DD-1d}
\end{figure}

Let $d=s=t=1$, let $n=3$, and let $\Omega_L=[0,\frac23]$, $\Omega_R=(\frac23,1]$. Note that $\II_3^{\Omega_L}=\{1,2\}$, $\II_3^{\Omega_R}=\{3\}$ and, consequently, $N_3^{\Omega_L}=2$, $N_3^{\Omega_R}=1$; see also Figure~\ref{DD-1d}.
Let
\[ A=\begin{bmatrix}a & b & c\\ d & e & f\\ g & h & i\end{bmatrix}\in\mathbb C^{3\times 3}. \]
Then, we have
\begin{alignat*}{5}
Z_{\Omega_L,\Omega_R}^{3,1,1}(A)&=\begin{bmatrix}0 & 0 & c\\ 0 & 0 & f\\ 0 & 0 & 0\end{bmatrix},&\qquad R_{\Omega_L,\Omega_R}^{3,1,1}(A)&=\begin{bmatrix}c\\ f\end{bmatrix},&\qquad E_{\Omega_L,\Omega_R}^{3,1,1}\biggl(\begin{bmatrix}c\\ f\end{bmatrix}\biggr)&=\begin{bmatrix}0 & 0 & c\\ 0 & 0 & f\\ 0 & 0 & 0\end{bmatrix}=Z_{\Omega_L,\Omega_R}^{3,1,1}(A).
\end{alignat*}
Note in particular that $E_{\Omega_L,\Omega_R}^{3,1,1}(R_{\Omega_L,\Omega_R}^{3,1,1}(A))=Z_{\Omega_L,\Omega_R}^{3,1,1}(A)$.
\end{example}

A few basic equations involving zeroing, restriction and expansion operators are collected in Lemma~\ref{zre-eqs}.

\begin{lemma}\label{zre-eqs}
Let $d,s,t$ be positive integers, let $\nn\in\mathbb N^d$, and let $\Omega_L,\Omega_R\subseteq[0,1]^d$. Then,
\begin{alignat}{3}
R_{\Omega_L,\Omega_R}^{\nn,s,t}(Z_{\Omega_L,\Omega_R}^{\nn,s,t}(A))&=R^{\nn,s,t}_{\Omega_L,\Omega_R}(A),&\qquad A&\in\mathbb C^{N(\nn)s\times N(\nn)t},\label{RZ=R}\\
Z_{\Omega_L,\Omega_R}^{\nn,s,t}(E^{\nn,s,t}_{\Omega_L,\Omega_R}(B))&=E^{\nn,s,t}_{\Omega_L,\Omega_R}(B),&\qquad B&\in\mathbb C^{N_\nn^{\Omega_L}s\times N_\nn^{\Omega_R}t},\label{ZE=E}\\
R_{\Omega_L,\Omega_R}^{\nn,s,t}(E^{\nn,s,t}_{\Omega_L,\Omega_R}(B))&=B,&\qquad B&\in\mathbb C^{N_\nn^{\Omega_L}s\times N_\nn^{\Omega_R}t},\label{RE=I}\\
E^{\nn,s,t}_{\Omega_L,\Omega_R}(R_{\Omega_L,\Omega_R}^{\nn,s,t}(A))&=Z_{\Omega_L,\Omega_R}^{\nn,s,t}(A),&\qquad A&\in\mathbb C^{N(\nn)s\times N(\nn)t}.\label{ER=Z}
\end{alignat}
\end{lemma}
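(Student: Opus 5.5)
The four identities follow by substituting the definitions of the operators in Definition~\ref{ZRE-operators} and then simplifying with the identities of Lemma~\ref{r-lemma}. To lighten notation, write $R_L=R^{\nn,s}_{\Omega_L}$, $R_R=R^{\nn,t}_{\Omega_R}$, $D_L=D_\nn(\chi_{\Omega_L}I_s)$ and $D_R=D_\nn(\chi_{\Omega_R}I_t)$. In this notation $Z(A)=D_LAD_R$, $R(A)=R_LAR_R^T$ and $E(B)=R_L^TBR_R$.

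For \eqref{RZ=R}, we have $R(Z(A))=R_LD_L\,A\,D_RR_R^T$. By \eqref{basicReq'}, $R_LD_L=R_L$. Transposing the second identity in \eqref{basicReq'} and using that $D_R$ is real diagonal, hence symmetric, gives $D_RR_R^T=R_R^T$. Therefore $R(Z(A))=R_LAR_R^T=R(A)$.

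For \eqref{ZE=E}, we have $Z(E(B))=D_LR_L^T\,B\,R_RD_R$. By \eqref{basicReq'}, $D_LR_L^T=R_L^T$ and $R_RD_R=R_R$, so $Z(E(B))=R_L^TBR_R=E(B)$.

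For \eqref{RE=I}, we have $R(E(B))=R_LR_L^T\,B\,R_RR_R^T$. The first identity in \eqref{basicReq} makes both outer factors identity matrices, so $R(E(B))=B$.

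For \eqref{ER=Z}, we have $E(R(A))=R_L^TR_L\,A\,R_R^TR_R$. The second identity in \eqref{basicReq} gives $R_L^TR_L=D_L$ and $R_R^TR_R=D_R$, so $E(R(A))=D_LAD_R=Z(A)$.

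The only step that needs care is the transpose used for \eqref{RZ=R}. The identity $D_RR_R^T=R_R^T$ comes from $(R_RD_R)^T=D_R^TR_R^T=D_RR_R^T$, which relies on $D_R$ being real and diagonal. One must also apply Lemma~\ref{r-lemma} with $s$ on the left factor and $t$ on the right factor, so that every product has consistent dimensions.
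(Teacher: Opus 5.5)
Your proof is correct and matches the paper's argument essentially line by line: you substitute the definitions of the zeroing, restriction and expansion operators and simplify with \eqref{basicReq}--\eqref{basicReq'}. The transpose detour you flag is unnecessary, since $D_\nn(\chi_{\Omega_R}I_t)(R^{\nn,t}_{\Omega_R})^T=(R^{\nn,t}_{\Omega_R})^T$ is exactly the second identity in \eqref{basicReq'} with $t$ in place of $s$; what your last paragraph actually transposes is the first identity, so your ``transposing the second identity'' remark is mislabeled, though harmless.
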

\begin{proof}
Equations~\eqref{RZ=R}--\eqref{ER=Z} are intuitively clear: it is enough to think about the actions of zeroing, restriction and expansion operators on an arbitrary matrix.
Alternatively, we can easily prove \eqref{RZ=R}--\eqref{ER=Z} through Lemma~\ref{r-lemma}: 
\begin{align*}
R_{\Omega_L,\Omega_R}^{\nn,s,t}(Z_{\Omega_L,\Omega_R}^{\nn,s,t}(A))&=R^{\nn,s}_{\Omega_L}D_\nn(\chi_{\Omega_L}I_s)\,A\,D_\nn(\chi_{\Omega_R}I_t)(R^{\nn,t}_{\Omega_R})^T=R^{\nn,s}_{\Omega_L}\,A\,(R^{\nn,t}_{\Omega_R})^T=R^{\nn,s,t}_{\Omega_L,\Omega_R}(A),\\
Z_{\Omega_L,\Omega_R}^{\nn,s,t}(E^{\nn,s,t}_{\Omega_L,\Omega_R}(B))&=D_\nn(\chi_{\Omega_L}I_s)(R^{\nn,s}_{\Omega_L})^T\,B\,R^{\nn,t}_{\Omega_R}D_\nn(\chi_{\Omega_R}I_t)=(R^{\nn,s}_{\Omega_L})^T\,B\,R^{\nn,t}_{\Omega_R}=E^{\nn,s,t}_{\Omega_L,\Omega_R}(B),\\
R_{\Omega_L,\Omega_R}^{\nn,s,t}(E^{\nn,s,t}_{\Omega_L,\Omega_R}(B))&=R^{\nn,s}_{\Omega_L}(R^{\nn,s}_{\Omega_L})^T\,B\,R^{\nn,t}_{\Omega_R}(R^{\nn,t}_{\Omega_R})^T=I_{N_\nn^{\Omega_L}s}\,B\,I_{N_\nn^{\Omega_R}t}=B,\\
E^{\nn,s,t}_{\Omega_L,\Omega_R}(R_{\Omega_L,\Omega_R}^{\nn,s,t}(A))&=(R^{\nn,s}_{\Omega_L})^TR^{\nn,s}_{\Omega_L}\,A\,(R^{\nn,t}_{\Omega_R})^TR^{\nn,t}_{\Omega_R}=D_\nn(\chi_{\Omega_L}I_s)\,A\,D_\nn(\chi_{\Omega_R}I_t)=Z_{\Omega_L,\Omega_R}^{\nn,s,t}(A). \qedhere
\end{align*}
\end{proof}

With the definitions of zeroing, restriction and expansion operators at hand, we can now define the BJ/BGS preconditioners for multilevel block matrices. These definitions are based on the notions of blockdiag and blocktril operators, which are introduced here as generalizations of the analogous notions presented in \cite{GLH}.

\begin{definition}[\textbf{block Jacobi/Gauss--Seidel preconditioners}]\label{bJGSp}
Let $d,s,t,\nu$ be positive integers, let $\nn\in\mathbb N^d$, and let $\Omega_1,\ldots,\Omega_\nu\subseteq[0,1]^d$.
\begin{itemize}[nolistsep,leftmargin=*]
	\item We define the operator $\mathop{\rm blockdiag}_{\Omega_1,\ldots,\Omega_\nu}^{\nn,s,t}$ as follows:
	\begin{equation}\label{blockdiag-op}
	\mathop{\rm blockdiag}^{\nn,s,t}_{\Omega_1,\ldots,\Omega_\nu}:\mathbb C^{N(\nn)s\times N(\nn)t}\to\mathbb C^{N(\nn)s\times N(\nn)t},\qquad\mathop{\rm blockdiag}_{\Omega_1,\ldots,\Omega_\nu}^{\nn,s,t}(A)=\sum_{i=1}^\nu Z_{\Omega_i,\Omega_i}^{\nn,s,t}(A).
	\end{equation}
	If $A\in\mathbb C^{N(\nn)s\times N(\nn)t}$ and $\{\Omega_1,\ldots,\Omega_\nu\}$ is a partition of $[0,1]^d$, then
	\begin{equation}\label{bJ-pop}
	P_{\Omega_1,\ldots,\Omega_\nu}^{BJ,\nn,s,t}(A)=\mathop{\rm blockdiag}_{\Omega_1,\ldots,\Omega_\nu}^{\nn,s,t}(A)
	\end{equation}
	is referred to as the block Jacobi (BJ) preconditioner of $A$ generated by the partition $\{\Omega_1,\ldots,\Omega_\nu\}$. 
	\item We define the operator $\mathop{\rm blocktril}_{\Omega_1,\ldots,\Omega_\nu}^{\nn,s,t}$ as follows:
	\begin{equation}\label{blocktril-op}
	\mathop{\rm blocktril}^{\nn,s,t}_{\Omega_1,\ldots,\Omega_\nu}:\mathbb C^{N(\nn)s\times N(\nn)t}\to\mathbb C^{N(\nn)s\times N(\nn)t},\qquad\mathop{\rm blocktril}_{\Omega_1,\ldots,\Omega_\nu}^{\nn,s,t}(A)=\sum_{1\le j\le i\le\nu}Z_{\Omega_i,\Omega_j}^{\nn,s,t}(A).
	\end{equation}
	If $A\in\mathbb C^{N(\nn)s\times N(\nn)t}$ and $\{\Omega_1,\ldots,\Omega_\nu\}$ is a partition of $[0,1]^d$, then
	\begin{equation}\label{bgs-pop}
	P_{\Omega_1,\ldots,\Omega_\nu}^{BGS,\nn,s,t}(A)=\mathop{\rm blocktril}_{\Omega_1,\ldots,\Omega_\nu}^{\nn,s,t}(A)
	\end{equation}
	is referred to as block Gauss--Seidel (BGS) preconditioner of $A$ generated by the partition $\{\Omega_1,\ldots,\Omega_\nu\}$. 
\end{itemize}
\end{definition}

Example~\ref{illustration'} provides an illustration of Definition~\ref{bJGSp} in the $1$-level scalar case $d=s=t=1$ with $\nu=2$.

\begin{example}\label{illustration'}
Let $d=s=t=1$, let $\nu=2$, let $n=3$, and let $\Omega_1=[0,\frac23]$, $\Omega_2=(\frac23,1]$. Note that $\Omega_1,\Omega_2$ are just the sets $\Omega_L,\Omega_R$ of Example~\ref{illustration} and $\{\Omega_1,\Omega_2\}$ is a partition of $[0,1]$.
We have $\II_3^{\Omega_1}=\{1,2\}$, $\II_3^{\Omega_2}=\{3\}$ and $N_3^{\Omega_1}=2$, $N_3^{\Omega_2}=1$; see also Figure~\ref{DD-1d}, keeping in mind that $\Omega_1=\Omega_L$ and $\Omega_2=\Omega_R$.
Let
\[ A=\begin{bmatrix}a & b & c\\ d & e & f\\ g & h & i\end{bmatrix}\in\mathbb C^{3\times 3}. \]
Then, we have
{\allowdisplaybreaks\begin{alignat*}{5}
Z_{\Omega_1,\Omega_1}^{3,1,1}(A)&=\begin{bmatrix}a & b & 0\\ d & e & 0\\ 0 & 0 & 0\end{bmatrix},&\qquad R_{\Omega_1,\Omega_1}^{3,1,1}(A)&=\begin{bmatrix}a & b\\ d & e\end{bmatrix},&\qquad E_{\Omega_1,\Omega_1}^{3,1,1}\biggl(\begin{bmatrix}a & b\\ d & e\end{bmatrix}\biggr)&=\begin{bmatrix}a & b & 0\\ d & e & 0\\ 0 & 0 & 0\end{bmatrix}=Z_{\Omega_1,\Omega_1}^{3,1,1}(A),\\*
Z_{\Omega_1,\Omega_2}^{3,1,1}(A)&=\begin{bmatrix}0 & 0 & c\\ 0 & 0 & f\\ 0 & 0 & 0\end{bmatrix},&\qquad R_{\Omega_1,\Omega_2}^{3,1,1}(A)&=\begin{bmatrix}c\\ f\end{bmatrix},&\qquad E_{\Omega_1,\Omega_2}^{3,1,1}\biggl(\begin{bmatrix}c\\ f\end{bmatrix}\biggr)&=\begin{bmatrix}0 & 0 & c\\ 0 & 0 & f\\ 0 & 0 & 0\end{bmatrix}=Z_{\Omega_1,\Omega_2}^{3,1,1}(A),\\
Z_{\Omega_2,\Omega_1}^{3,1,1}(A)&=\begin{bmatrix}0 & 0 & 0\\ 0 & 0 & 0\\ g & h & 0\end{bmatrix},&\qquad R_{\Omega_2,\Omega_1}^{3,1,1}(A)&=\begin{bmatrix}g & h\end{bmatrix},&\qquad E_{\Omega_2,\Omega_1}^{3,1,1}\bigl(\begin{bmatrix}g & h\end{bmatrix}\bigr)&=\begin{bmatrix}0 & 0 & 0\\ 0 & 0 & 0\\ g & h & 0\end{bmatrix}=Z_{\Omega_2,\Omega_1}^{3,1,1}(A),\\*
Z_{\Omega_2,\Omega_2}^{3,1,1}(A)&=\begin{bmatrix}0 & 0 & 0\\ 0 & 0 & 0\\ 0 & 0 & i\end{bmatrix},&\qquad R_{\Omega_2,\Omega_2}^{3,1,1}(A)&=\begin{bmatrix}i\end{bmatrix},&\qquad E_{\Omega_2,\Omega_2}^{3,1,1}\bigl(\begin{bmatrix}i\end{bmatrix}\bigr)&=\begin{bmatrix}0 & 0 & 0\\ 0 & 0 & 0\\ 0 & 0 & i\end{bmatrix}=Z_{\Omega_2,\Omega_2}^{3,1,1}(A),
\end{alignat*}}%
and
\begin{align*}
P_{\Omega_1,\Omega_2}^{BJ,3,1,1}(A)&=\mathop{\rm blockdiag}_{\Omega_1,\Omega_2}^{3,1,1}(A)=\sum_{i=1}^2Z_{\Omega_1,\Omega_2}^{3,1,1}(A)=\begin{bmatrix}a & b & 0\\ d & e & 0\\ 0 & 0 & i\end{bmatrix}=\begin{bmatrix}R_{\Omega_1,\Omega_1}^{3,1,1}(A) & \\[7.5pt] & R_{\Omega_2,\Omega_2}^{3,1,1}(A)\end{bmatrix},\\
P_{\Omega_1,\Omega_2}^{BGS,3,1,1}(A)&=\sum_{1\le j\le i\le 2}Z_{\Omega_i,\Omega_j}^{3,1,1}(A)=\begin{bmatrix}a & b & 0\\ d & e & 0\\ g & h & i\end{bmatrix}=\begin{bmatrix}R_{\Omega_1,\Omega_1}^{3,1,1}(A) & \\[7.5pt] R_{\Omega_2,\Omega_1}^{3,1,1}(A) & R_{\Omega_2,\Omega_2}^{3,1,1}(A)\end{bmatrix}.
\end{align*}
Note that
\[ A=\sum_{i,j=1}^2Z_{\Omega_i,\Omega_j}^{3,1,1}(A)=\begin{bmatrix}R_{\Omega_1,\Omega_1}^{3,1,1}(A) & R_{\Omega_1,\Omega_2}^{3,1,1}(A)\\[7.5pt] R_{\Omega_2,\Omega_1}^{3,1,1}(A) & R_{\Omega_2,\Omega_2}^{3,1,1}(A)\end{bmatrix}. \]
\end{example}

In Example~\ref{recupero}, we show that the BJ/BGS preconditioners considered in \cite{GLH} are special instances of those defined in Definition~\ref{bJGSp}.
Throughout this paper, a partition of a positive integer $n$ is a vector of positive integers $(n_1,\ldots,n_\nu)$ such that $n_1+\ldots+n_\nu=n$.

\begin{example}\label{recupero}
Let $d=1$, let $s,t,\nu,n$ be positive integers, let $(n_1,\ldots,n_\nu)$ be a partition of $n$, and let $\{\Omega_1,\ldots,\Omega_\nu\}$ be the partition of $[0,1]$ associated with $(n_1,\ldots,n_\nu)$, which is defined as follows:
\begin{align*}
\Omega_1&=\Bigl[0,\frac{n_1}{n}\Bigr],\\
\Omega_2&=\Bigl(\frac{n_1}{n},\frac{n_1+n_2}{n}\Bigr],\\
\Omega_3&=\Bigl(\frac{n_1+n_2}n,\frac{n_1+n_2+n_3}n\Bigr],\\
\vdots & \\
\Omega_\nu&=\Bigl(\frac{n_1+\ldots+n_{\nu-1}}n,1\Bigr].
\end{align*}
By construction, for every $k=1,\ldots,\nu$, the set of indices $i\in\{1,\ldots,n\}$ associated with $\Omega_k$ is given by
\[ \II_n^{\Omega_k}=\{n_1+\ldots+n_{k-1}+1,\ldots,n_1+\ldots+n_k\}. \]
If $A=[A_{ij}]_{i,j=1}^n\in\mathbb C^{ns\times nt}$ is a block matrix with blocks $A_{ij}\in\mathbb C^{s\times t}$ for every $i,j=1,\ldots,n$, then, by direct computation,
\begin{align}
P_{n_1,\ldots,n_\nu}^{BJ,n,s,t}(A)&:=P_{\Omega_1,\ldots,\Omega_\nu}^{BJ,n,s,t}(A)=\mathop{\rm blockdiag}_{\Omega_1,\ldots,\Omega_\nu}^{n,s,t}(A)=\sum_{i=1}^\nu Z_{\Omega_i,\Omega_i}^{n,s,t}(A)=\begin{bmatrix}A^{(11)} & & \\ & \ddots & \\ & & A^{(\nu\nu)}\end{bmatrix},\label{block-J}\\
P_{n_1,\ldots,n_\nu}^{BGS,n,s,t}(A)&:=P_{\Omega_1,\ldots,\Omega_\nu}^{BGS,n,s,t}(A)=\mathop{\rm blocktril}_{\Omega_1,\ldots,\Omega_\nu}^{n,s,t}(A)=\sum_{1\le j\le i\le \nu}Z_{\Omega_i,\Omega_j}^{n,s,t}(A)=\begin{bmatrix}A^{(11)} & & \\ \vdots & \ddots & \\[3pt] A^{(\nu1)} & \cdots & A^{(\nu\nu)}\end{bmatrix},\label{block-GS}
\end{align}
where $A^{(pq)}$ is the block of $A$ of size $n_ps\times n_qt$ in position $(p,q)$, i.e., $A^{(pq)}=[A_{ij}]_{i=n_1+\ldots+n_{p-1}+1,\ldots,n_1+\ldots+n_p}^{j=n_1+\ldots+n_{q-1}+1,\ldots,n_1+\ldots+n_q}$ $(1\le p,q\le\nu)$.
The matrices \eqref{block-J}--\eqref{block-GS} are just the BJ/BGS preconditioners for $A$ generated by the partition $(n_1,\ldots,n_\nu)$, as defined in \cite[Definition~4.1]{GLH}.
In the case where $s=t=1$, the matrices \eqref{block-J}--\eqref{block-GS} coincide with the classical BJ/BGS preconditioners for $A$ generated by the partition $(n_1,\ldots,n_\nu)$, as defined in standard numerical analysis courses.
\end{example}

\begin{remark}\label{A-BJ-BGS}
Let $d,s,t,\nu$ be positive integers, let $\nn\in\mathbb N^d$, and let $\{\Omega_1,\ldots,\Omega_\nu\}$ be a partition of $[0,1]^d$. By Definitions~\ref{ZRE-operators}--\ref{bJGSp}, for every $A\in\mathbb C^{N(\nn)s\times N(\nn)t}$, we have
\[ A=\sum_{i,j=1}^\nu Z_{\Omega_i,\Omega_j}^{\nn,s,t}(A),\qquad P_{\Omega_1,\ldots,\Omega_\nu}^{BJ,\nn,s,t}(A)=\sum_{i=1}^\nu Z_{\Omega_i,\Omega_i}^{\nn,s,t}(A),\qquad P_{\Omega_1,\ldots,\Omega_\nu}^{BGS,\nn,s,t}(A)=\sum_{1\le j\le i\le\nu}Z_{\Omega_i,\Omega_j}^{\nn,s,t}(A). \]
We can therefore ``see'' the connection between $A$ and the BJ/BGS preconditioners $P^{BJ,\nn,s,t}_{\Omega_1,\ldots,\Omega_\nu}(A)$ and $P^{BGS,\nn,s,t}_{\Omega_1,\ldots,\Omega_\nu}(A)$.
\end{remark}

\begin{remark}
Let $d,s,t,\nu$ be positive integers, let $\nn\in\mathbb N^d$, and let $\{\Omega_1,\ldots,\Omega_\nu\}$ be a partition of $[0,1]^d$. Then, there exists a permutation matrix $\Pi^\nn_{\Omega_1,\ldots,\Omega_\nu}$ of size $N(\nn)$, depending only on $\nn$ and $\Omega_1,\ldots,\Omega_\nu$, such that, for every $A\in\mathbb C^{N(\nn)s\times N(\nn)t}$,
\begin{align*}
(\Pi_{\Omega_1,\ldots,\Omega_\nu}^\nn\otimes I_s)P_{\Omega_1,\ldots,\Omega_\nu}^{BJ,\nn,s,t}(A)(\Pi_{\Omega_1,\ldots,\Omega_\nu}^\nn\otimes I_t)^T&=\begin{bmatrix}
R_{\Omega_1,\Omega_1}^{\nn,s,t}(A) & & & \\[10pt]
& R_{\Omega_2,\Omega_2}^{\nn,s,t}(A) & & \\[10pt]
& & \ddots & \\[10pt]
& & & R_{\Omega_\nu,\Omega_\nu}^{\nn,s,t}(A)
\end{bmatrix},\\
(\Pi_{\Omega_1,\ldots,\Omega_\nu}^\nn\otimes I_s)P_{\Omega_1,\ldots,\Omega_\nu}^{BGS,\nn,s,t}(A)(\Pi_{\Omega_1,\ldots,\Omega_\nu}^\nn\otimes I_t)^T&=\begin{bmatrix}
R_{\Omega_1,\Omega_1}^{\nn,s,t}(A) & & & \\[15pt]
R_{\Omega_2,\Omega_1}^{\nn,s,t}(A) & R_{\Omega_2,\Omega_2}^{\nn,s,t}(A) & & \\[15pt]
\vdots & \ddots & \ddots & \\[15pt]
R_{\Omega_\nu,\Omega_1}^{\nn,s,t}(A) & \cdots & R_{\Omega_\nu,\Omega_{\nu-1}}^{\nn,s,t}(A) & R_{\Omega_\nu,\Omega_\nu}^{\nn,s,t}(A)
\end{bmatrix}.
\end{align*}
The permutation matrix $\Pi_{\Omega_1,\ldots,\Omega_\nu}^\nn$ is defined in the following way from the vectors of the canonical basis of $\mathbb C^{N(\nn)}$:
\begin{itemize}[nolistsep,leftmargin=*]
	\item The 1st $N_\nn^{\Omega_1}$ rows of $\Pi^\nn_{\Omega_1,\ldots,\Omega_\nu}$ are the vectors $(\ee_\ii^{(\nn)})^T$ corresponding to the $d$-indices $\ii\in\II_\nn^{\Omega_1}$,
	\item The 2nd $N_\nn^{\Omega_2}$ rows of $\Pi^\nn_{\Omega_1,\ldots,\Omega_\nu}$ are the vectors $(\ee_\ii^{(\nn)})^T$ corresponding to the $d$-indices $\ii\in\II_\nn^{\Omega_2}$,
	
	$\vdots$ $\vphantom{N_\nn^{\Omega_2}\Pi^\nn_{\Omega_1,\ldots,\Omega_\nu}(\ee_\ii^{(\nn)})^T\ii\in\II_\nn^{\Omega_2}}$
	\item The $\nu$th $N_\nn^{\Omega_\nu}$ rows of $\Pi^\nn_{\Omega_1,\ldots,\Omega_\nu}$ are the vectors $(\ee_\ii^{(\nn)})^T$ corresponding to the $d$-indices $\ii\in\II_\nn^{\Omega_\nu}$.
\end{itemize}
In formulas,
\[ \Pi^\nn_{\Omega_1,\ldots,\Omega_\nu}=\left[\begin{array}{c}[(\ee_\ii^{(\nn)})^T]_{\ii\in\II_{\nn\vphantom{\int}}^{\Omega_1}}\\ \hline [(\ee_\ii^{(\nn)\vphantom{\int^1}})^T]_{\ii\in\II_{\nn\vphantom{\int}}^{\Omega_2}}\\ 
\hline \vdots\vphantom{\Big|} \\ \hline [(\ee_\ii^{(\nn)\vphantom{\int^1}})^T]_{\ii\in\II_\nn^{\Omega_\nu}}\end{array}\right]. \]
\end{remark}

Before moving on to the definition of the AS/MS preconditioners, we introduce the restriction-inversion-expansion (RIE) operator for multilevel block matrices.

\begin{definition}[\textbf{restriction-inversion-expansion operator}]\label{rie}
Let $d,s$ be positive integers, let $\nn\in\mathbb N^d$, and let $\Omega\subseteq[0,1]^d$.
We define the restriction-inversion-expansion (RIE) operator $\rie_\Omega^{\nn,s}$ as follows:
\begin{equation}\label{rie-op}
\rie_\Omega^{\nn,s}:D_\Omega^{\rie,\nn,s}\subseteq\mathbb C^{N(\nn)s\times N(\nn)s}\to\mathbb C^{N(\nn)s\times N(\nn)s},\qquad\rie_\Omega^{\nn,s}(A)=E_{\Omega,\Omega}^{\nn,s,s}({R_{\Omega,\Omega}^{\nn,s,s}(A)}^{-1}),
\end{equation}
where $D^{\rie,\nn,s}_\Omega$ is the domain of $\rie_\Omega^{\nn,s}$ given by
\[ D^{\rie,\nn,s}_\Omega=\bigl\{A\in\mathbb C^{N(\nn)s\times N(\nn)s}:\,R_{\Omega,\Omega}^{\nn,s,s}(A)\textup{ is invertible}\bigr\}. \]
\end{definition}

Using the RIE operator, we now define the AS/MS preconditioners for multilevel block matrices.
In what follows, a cover of $[0,1]^d$ is a family of sets $\{\Omega_1,\ldots,\Omega_\nu\}$ such that $\Omega_1,\ldots,\Omega_\nu\subseteq[0,1]^d$ and $\Omega_1\cup\cdots\cup\Omega_\nu=[0,1]^d$.

\begin{definition}[\textbf{additive/multiplicative Schwarz preconditioners}]\label{amSp}
Let $d,s,\nu$ be positive integers, let $\nn\in\mathbb N^d$, and let $\{\Omega_1,\ldots,\Omega_\nu\}$ be a cover of $[0,1]^d$.
\begin{itemize}[nolistsep,leftmargin=*]
	\item We define the additive Schwarz (AS) operator as follows:
	\begin{equation}\label{as-op}
	\begin{aligned}
	&P^{AS,\nn,s}_{\Omega_1,\ldots,\Omega_\nu}:D^{AS,\nn,s}_{\Omega_1,\ldots,\Omega_\nu}\subseteq\mathbb C^{N(\nn)s\times N(\nn)s}\to\mathbb C^{N(\nn)s\times N(\nn)s},\\
	&P_{\Omega_1,\ldots,\Omega_\nu}^{AS,\nn,s}(A)=\Biggl[\sum_{i=1}^\nu\rie_{\Omega_i}^{\nn,s}(A)\Biggr]^{-1},
	\end{aligned}
	\end{equation}
	where $D^{AS,\nn,s}_{\Omega_1,\ldots,\Omega_\nu}$ is the domain of $P^{AS,\nn,s}_{\Omega_1,\ldots,\Omega_\nu}$ given by
	\begin{equation*}
	\begin{aligned}
	D^{AS,\nn,s}_{\Omega_1,\ldots,\Omega_\nu}&=\Biggl\{A\in D_{\Omega_1}^{\rie,\nn,s}\cap\cdots\cap D_{\Omega_\nu}^{\rie,\nn,s}:\,\sum_{i=1}^\nu\rie_{\Omega_i}^{\nn,s}(A)\textup{ is invertible}\Biggr\}\\
	&=\Biggl\{A\in\mathbb C^{N(\nn)s\times N(\nn)s}:\,R_{\Omega_1,\Omega_1}^{\nn,s,s}(A),\ldots,R_{\Omega_\nu,\Omega_\nu}^{\nn,s,s}(A),\,\sum_{i=1}^\nu\rie_{\Omega_i}^{\nn,s}(A)\textup{ are invertible}\Biggr\}.
	\end{aligned}
	\end{equation*}
	If $A\in D^{AS,\nn,s}_{\Omega_1,\ldots,\Omega_\nu}$, then $P_{\Omega_1,\ldots,\Omega_\nu}^{AS,\nn,s}(A)$ is referred to as the additive Schwarz (AS) preconditioner of $A$ generated by the cover $\{\Omega_1,\ldots,\Omega_\nu\}^{\vphantom{1}}$.
	\item We define the multiplicative Schwarz (MS) operator as follows:
	\begin{equation}\label{ms-op}
	\begin{aligned}
	&P^{MS,\nn,s}_{\Omega_1,\ldots,\Omega_\nu}:D^{MS,\nn,s}_{\Omega_1,\ldots,\Omega_\nu}\subseteq\mathbb C^{N(\nn)s\times N(\nn)s}\to\mathbb C^{N(\nn)s\times N(\nn)s},\\
	&P_{\Omega_1,\ldots,\Omega_\nu}^{MS,\nn,s}(A)=A\Biggl[I-\prod_{i=\nu}^1\Bigl(I-\rie_{\Omega_i}^{\nn,s}(A)A\Bigr)\Biggr]^{-1},
	\end{aligned}
	\end{equation}
	where $D^{MS,\nn,s}_{\Omega_1,\ldots,\Omega_\nu}$ is the domain of $P^{MS,\nn,s}_{\Omega_1,\ldots,\Omega_\nu}$ given by
	\begin{equation*}
	\begin{aligned}
	D^{MS,\nn,s}_{\Omega_1,\ldots,\Omega_\nu}&=\Biggl\{A\in D_{\Omega_1}^{\rie,\nn,s}\cap\cdots\cap D_{\Omega_\nu}^{\rie,\nn,s}:\,I-\prod_{i=\nu}^1\Bigl(I-\rie_{\Omega_i}^{\nn,s}(A)A\Bigr)\textup{ is invertible}\Biggr\}\\
	&=\Biggl\{A\in\mathbb C^{N(\nn)s\times N(\nn)s}:\,R_{\Omega_1,\Omega_1}^{\nn,s,s}(A),\ldots,R_{\Omega_\nu,\Omega_\nu}^{\nn,s,s}(A),\,I-\prod_{i=\nu}^1\Bigl(I-\rie_{\Omega_i}^{\nn,s}(A)A\Bigr)\textup{ are invertible}\Biggr\}.
	\end{aligned}
	\end{equation*}
	If $A\in D^{MS,\nn,s}_{\Omega_1,\ldots,\Omega_\nu}$, then $P_{\Omega_1,\ldots,\Omega_\nu}^{MS,\nn,s}(A)$ is referred to as the multiplicative Schwarz (MS) preconditioner of $A$ generated by the cover $\{\Omega_1,\ldots,\Omega_\nu\}^{\vphantom{1}}$.
\end{itemize}
\end{definition}

Example~\ref{illustration''} provides an illustration of Definition~\ref{amSp} in the $1$-level scalar case $d=s=1$ with $\nu=2$.

\begin{example}\label{illustration''}
Let $d=s=1$, let $\nu=2$, let $n=3$, and let $\Omega_1=[0,\frac23]$, $\Omega_2=(\frac23,1]$. Note that $\Omega_1,\Omega_2$ are the same as in Example~\ref{illustration'} and $\{\Omega_1,\Omega_2\}$ is a partition of $[0,1]$.
We have $\II_3^{\Omega_1}=\{1,2\}$, $\II_3^{\Omega_2}=\{3\}$ and $N_3^{\Omega_1}=2$, $N_3^{\Omega_2}=1$.
Let
\[ A=\begin{bmatrix}a & b & c\\ d & e & f\\ g & h & i\end{bmatrix}\in\mathbb C^{3\times 3}. \]
To define the AS/MS preconditioners for $A$, it is necessary that $A\in D_{\Omega_1}^{\rie,3,1}\cap D_{\Omega_2}^{\rie,3,1}$, i.e., the matrices
\begin{equation}\label{Rcommon}
R_{\Omega_1,\Omega_1}^{3,1,1}(A)=\begin{bmatrix}a & b\\ d & e\end{bmatrix},\qquad R_{\Omega_2,\Omega_2}^{3,1,1}(A)=\begin{bmatrix}i\end{bmatrix}
\end{equation}
must be invertible. This allows us to compute the ``RIE matrices''
\begin{align*}
\rie_{\Omega_1}^{3,1}(A)&=E_{\Omega_1,\Omega_1}^{3,1,1}({R_{\Omega_1,\Omega_1}^{3,1,1}(A)}^{-1})=\begin{bmatrix}{R_{\Omega_1,\Omega_1}^{3,1,1}(A)}^{-1} & \begin{array}{c}0\\0\end{array}\\ \begin{array}{cc}0 &\quad 0 \end{array} & 0\end{bmatrix}=\begin{bmatrix}\hspace{-7.5pt}\ \ \;\begin{bmatrix}a & b\\ d & e\end{bmatrix}^{-1} & \hspace{-10pt}\begin{array}{c}0\\0\end{array}\\[10pt] \hspace{-7.5pt}\begin{array}{cc}0 & 0 \end{array} & \hspace{-10pt}0\end{bmatrix},\\
\rie_{\Omega_2}^{3,1}(A)&=E_{\Omega_2,\Omega_2}^{3,1,1}({R_{\Omega_2,\Omega_2}^{3,1,1}(A)}^{-1})=\begin{bmatrix}\,\,0 & 0 & 0\\ \,\,0 & 0 & 0\\ \,\,0 & 0 & {R_{\Omega_2,\Omega_2}^{3,1,1}(A)}^{-1}\end{bmatrix}=\begin{bmatrix}\,\,0 & 0 & \hspace{-12pt}0\\ \,\,0 & 0 & \hspace{-12pt}0\\ \,\,0 & 0 & \begin{bmatrix}i\end{bmatrix}^{-1}\end{bmatrix}.
\end{align*}
Suppose now that $A\in D^{AS,3,1}_{\Omega_1,\Omega_2}$. This means that the matrices \eqref{Rcommon} are invertible as well as the matrix
\[ \sum_{i=1}^2\rie_{\Omega_i}^{3,1}(A)=\begin{bmatrix}{R_{\Omega_1,\Omega_1}^{3,1,1}(A)}^{-1} & \\ & {R_{\Omega_2,\Omega_2}^{3,1,1}(A)}^{-1}\end{bmatrix}. \]
Then, we can compute the AS preconditioner for $A$:
\[ P^{AS,3,1}_{\Omega_1,\Omega_2}(A)=\Biggl[\sum_{i=1}^2\rie_{\Omega_i}^{3,1}(A)\Biggr]^{-1}=\begin{bmatrix}R_{\Omega_1,\Omega_1}^{3,1,1}(A) & \\[7.5pt] & R_{\Omega_2,\Omega_2}^{3,1,1}(A)\end{bmatrix}=\begin{bmatrix}a & b & 0\\ d & e & 0\\ 0 & 0 & i\end{bmatrix}. \]
Suppose now that $A\in D^{MS,3,1}_{\Omega_1,\Omega_2}$. This means that the matrices \eqref{Rcommon} are invertible as well as the matrix
\begin{align*}
I-\prod_{i=2}^1\Bigl(I-\rie_{\Omega_i}^{3,1}(A)A\Bigr)&=I-\prod_{i=2}^1\left(I-\rie_{\Omega_i}^{3,1}(A)\begin{bmatrix}R_{\Omega_1,\Omega_1}^{3,1,1}(A) & R_{\Omega_1,\Omega_2}^{3,1,1}(A)\\[7.5pt] R_{\Omega_2,\Omega_1}^{3,1,1}(A) & R_{\Omega_2,\Omega_2}^{3,1,1}(A)\end{bmatrix}\right)\\
&=I-\begin{bmatrix}\begin{bmatrix}1 & 0\\ 0 & 1\end{bmatrix} & \begin{array}{c}0\\0\end{array}\\[10pt] -{R_{\Omega_2,\Omega_2}^{3,1,1}(A)}^{-1}R_{\Omega_2,\Omega_1}^{3,1,1}(A) & 0\end{bmatrix}\begin{bmatrix}\begin{bmatrix}0 & 0\\ 0 & 0\end{bmatrix} & -{R_{\Omega_1,\Omega_1}^{3,1,1}(A)}^{-1}R_{\Omega_1,\Omega_2}^{3,1,1}(A)\\[10pt] \begin{array}{cc}0 & 0\end{array} & 1\end{bmatrix}\\
&=\begin{bmatrix}\begin{bmatrix}1 & 0\\ 0 & 1\end{bmatrix} & {R_{\Omega_1,\Omega_1}^{3,1,1}(A)}^{-1}R_{\Omega_1,\Omega_2}^{3,1,1}(A)\\[10pt] \begin{array}{cc}0 & 0\end{array} & 1-{R_{\Omega_2,\Omega_2}^{3,1,1}(A)}^{-1}R_{\Omega_2,\Omega_1}^{3,1,1}(A){R_{\Omega_1,\Omega_1}^{3,1,1}(A)}^{-1}R_{\Omega_1,\Omega_2}^{3,1,1}(A)\end{bmatrix}.
\end{align*}
Then, we can compute the MS preconditioner for $A$:\,\footnote{\,The second equality holds because, by standard block matrix multiplication, one can show that
\[  \begin{bmatrix}R_{\Omega_1,\Omega_1}^{3,1,1}(A) & \\[7.5pt] R_{\Omega_2,\Omega_1}^{3,1,1}(A) & R_{\Omega_2,\Omega_2}^{3,1,1}(A)\end{bmatrix}\Biggl[I-\prod_{i=2}^1\Bigl(I-\rie_{\Omega_i}^{3,1}(A)A\Bigr)\Biggr]=\begin{bmatrix}R_{\Omega_1,\Omega_1}^{3,1,1}(A) & R_{\Omega_1,\Omega_2}^{3,1,1}(A)\\[7.5pt] R_{\Omega_2,\Omega_1}^{3,1,1}(A) & R_{\Omega_2,\Omega_2}^{3,1,1}(A)\end{bmatrix}=A. \]}
\[ P^{MS,3,1}_{\Omega_1,\Omega_2}(A)=A\Biggl[I-\prod_{i=2}^1\Bigl(I-\rie_{\Omega_i}^{3,1}(A)A\Bigr)\Biggr]^{-1}=\begin{bmatrix}R_{\Omega_1,\Omega_1}^{3,1,1}(A) & \\[7.5pt] R_{\Omega_2,\Omega_1}^{3,1,1}(A) & R_{\Omega_2,\Omega_2}^{3,1,1}(A)\end{bmatrix}=\begin{bmatrix}a & b & 0\\ d & e & 0\\ g & h & i\end{bmatrix}. \]
By comparing this example with Example~\ref{illustration'}, we note that
\[ P^{AS,3,1}_{\Omega_1,\Omega_2}(A)=P^{BJ,3,1,1}_{\Omega_1,\Omega_2}(A),\qquad P^{MS,3,1}_{\Omega_1,\Omega_2}(A)=P^{BGS,3,1,1}_{\Omega_1,\Omega_2}(A). \]
We will see in Theorem~\ref{S->JGS} that this is due to the fact that $\{\Omega_1,\Omega_2\}$ is a partition of $[0,1]$.
\end{example}

\begin{remark}\label{r:coverCN}
The reader may wonder why the AS/MS operators have been defined under the additional assumption that $\{\Omega_1,\ldots,\Omega_\nu\}$ is a cover of $[0,1]^d$, whereas this assumption is not present in the definition of the blockdiag/blocktril operators (cf.~Definitions~\ref{bJGSp} and~\ref{amSp}).
The reason is the following.
Suppose for the moment that we define the AS/MS operators exactly as in Definition~\ref{amSp} for any choice of sets $\Omega_1,\ldots,\Omega_\nu\subseteq[0,1]^d$.
In principle, this is possible.
Now, assume that there exists a multi-index $\ii\in\{\bu,\ldots,\nn\}$ such that $\ii/\nn\not\in\Omega_1\cup\cdots\cup\Omega_\nu$.
Then, by Lemma~\ref{coverCN} below, we have
\[ D^{AS,\nn,s}_{\Omega_1,\ldots,\Omega_\nu}=D^{MS,\nn,s}_{\Omega_1,\ldots,\Omega_\nu}=\emptyset, \]
which means that the AS/MS operators \eqref{as-op}--\eqref{ms-op} are trivial (because their domains are empty).
Thus, in order for the AS/MS operators \eqref{as-op}--\eqref{ms-op} to be non-trivial, it is necessary that the union $\Omega_1\cup\cdots\cup\Omega_\nu$ contains all grid points $\{\ii/\nn:\ii=\bu,\ldots,\nn\}$. Hence, we can assume from the start that $\{\Omega_1,\ldots,\Omega_\nu\}$ is a cover of $[0,1]^d$. Indeed, if $\{\Omega_1,\ldots,\Omega_\nu\}$ was not a cover of $[0,1]^d$ but contained all grid points $\{\ii/\nn:\ii=\bu,\ldots,\nn\}$, then we could arbitrarily extend it to a cover of $[0,1]^d$ without altering the definitions of the AS/MS operators \eqref{as-op}--\eqref{ms-op}, because such definitions depend only on $\II_\nn^{\Omega_1},\ldots,\II_\nn^{\Omega_\nu}$, i.e., on the grid points taken from $\{\ii/\nn:\ii=\bu,\ldots,\nn\}$ that are contained in each set $\Omega_1,\ldots,\Omega_\nu$. In other words, if $\Omega_1',\ldots,\Omega_\nu'\subseteq[0,1]^d$ are such that
\[ \II_\nn^{\Omega_i'}=\II_\nn^{\Omega_i},\qquad i=1,\ldots,\nu, \]
then we have $D^{AS,\nn,s}_{\Omega_1',\ldots,\Omega_\nu'}=D^{AS,\nn,s}_{\Omega_1,\ldots,\Omega_\nu}$, $D^{MS,\nn,s}_{\Omega_1',\ldots,\Omega_\nu'}=D^{MS,\nn,s}_{\Omega_1,\ldots,\Omega_\nu}$, and
\begin{alignat*}{3}
P^{AS,\nn,s}_{\Omega_1',\ldots,\Omega_\nu'}(A)&=P^{AS,\nn,s}_{\Omega_1,\ldots,\Omega_\nu}(A),&&\qquad A\in D^{AS,\nn,s}_{\Omega_1,\ldots,\Omega_\nu},\\
P^{MS,\nn,s}_{\Omega_1',\ldots,\Omega_\nu'}(A)&=P^{MS,\nn,s}_{\Omega_1,\ldots,\Omega_\nu}(A),&&\qquad A\in D^{MS,\nn,s}_{\Omega_1,\ldots,\Omega_\nu}.
\end{alignat*}
\end{remark}

\begin{lemma}\label{coverCN}
Let $d,s,\nu$ be positive integers, let $\nn\in\mathbb N^d$, and let $\Omega_1,\ldots,\Omega_\nu\subseteq[0,1]^d$.
Suppose that there exists a multi-index $\ii\in\{\bu,\ldots,\nn\}$ such that $\ii/\nn\not\in\Omega_1\cup\cdots\cup\Omega_\nu$. Then,
\begin{equation}\label{D=D=0}
D^{AS,\nn,s}_{\Omega_1,\ldots,\Omega_\nu}=D^{MS,\nn,s}_{\Omega_1,\ldots,\Omega_\nu}=\emptyset.
\end{equation}
\end{lemma}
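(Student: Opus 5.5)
The plan is to show that the $\ii$th block row of every matrix involved vanishes, which rules out invertibility. Fix $\ii\in\{\bu,\ldots,\nn\}$ with $\ii/\nn\notin\Omega_1\cup\cdots\cup\Omega_\nu$, so that $\ii\notin\II_\nn^{\Omega_k}$ for every $k=1,\ldots,\nu$. By the description of the expansion operator in Definition~\ref{ZRE-operators}, for every $A\in D^{\rie,\nn,s}_{\Omega_k}$ the block row $\ii$ (and block column $\ii$) of $\rie_{\Omega_k}^{\nn,s}(A)=E^{\nn,s,s}_{\Omega_k,\Omega_k}(R^{\nn,s,s}_{\Omega_k,\Omega_k}(A)^{-1})$ is zero. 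Equivalently, $(\ee_\ii^{(\nn)}\otimes I_s)^T\rie_{\Omega_k}^{\nn,s}(A)=O$, which also follows from \eqref{ZE=E} because $D_\nn(\chi_{\Omega_k}I_s)$ has a zero $(\ii,\ii)$ block.

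\emph{AS case.} Suppose some $A\in D^{AS,\nn,s}_{\Omega_1,\ldots,\Omega_\nu}$ exists. Then $\sum_{k=1}^\nu\rie_{\Omega_k}^{\nn,s}(A)$ has its entire $\ii$th block row (that is, $s$ rows) equal to zero. A matrix with a zero row is singular, which contradicts the definition of the domain. Hence $D^{AS,\nn,s}_{\Omega_1,\ldots,\Omega_\nu}=\emptyset$.

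\emph{MS case.} Here I work with the block column $\ii$ rather than the row. Set $E=\ee_\ii^{(\nn)}\otimes I_s$. Since $\rie_{\Omega_k}^{\nn,s}(A)$ has a zero $\ii$th block row, $E^T\rie_{\Omega_k}^{\nn,s}(A)A=O$, so $E^T(I-\rie_{\Omega_k}^{\nn,s}(A)A)=E^T$ for each $k$. Iterating through the product $\prod_{k=\nu}^1$ gives
\[ E^T\prod_{k=\nu}^1\bigl(I-\rie_{\Omega_k}^{\nn,s}(A)A\bigr)=E^T. \]
Consequently $E^T\bigl[I-\prod_{k=\nu}^1(I-\rie_{\Omega_k}^{\nn,s}(A)A)\bigr]=O$. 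This says the $\ii$th block row of that matrix vanishes, so the matrix is singular, and therefore $D^{MS,\nn,s}_{\Omega_1,\ldots,\Omega_\nu}=\emptyset$.

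There is no real obstacle. The only point requiring care is keeping the order of multiplication straight in the MS product. Left-multiplication by $E^T$ passes through each factor regardless of that order, because every factor fixes $E^T$ from the left.
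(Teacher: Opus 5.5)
Your proof is correct and follows the same route as the paper: the $\ii$th block row of each $\rie_{\Omega_k}^{\nn,s}(A)$ vanishes, hence so does the $\ii$th block row of $\sum_{k=1}^\nu\rie_{\Omega_k}^{\nn,s}(A)$ and of $I-\prod_{k=\nu}^1(I-\rie_{\Omega_k}^{\nn,s}(A)A)$. For the MS matrix you justify this explicitly via $E^T(I-\rie_{\Omega_k}^{\nn,s}(A)A)=E^T$, a step the paper simply asserts; the one slip is your sentence saying you ``work with the block column $\ii$ rather than the row'' in the MS case, since the computation that follows (and the one you need) is the block-row one, so that remark should be deleted.
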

\begin{proof}
Fix any matrix $A\in\mathbb C^{N(\nn)s\times N(\nn)s}$ such that $R_{\Omega_1,\Omega_1}^{\nn,s,s}(A),\ldots,R_{\Omega_\nu,\Omega_\nu}^{\nn,s,s}(A)$ are invertible.
By definition of the RIE operator, since $\ii/\nn\not\in\Omega_1\cup\cdots\cup\Omega_\nu$ by assumption, the (block) row of the matrix $\rie_{\Omega_i}^{\nn,s}(A)$ corresponding to the multi-index $\ii$ is zero for every $i=1,\ldots,\nu$. As a consequence, the (block) row corresponding to the multi-index $\ii$ is zero also for the matrices
$\sum_{i=1}^\nu\rie_{\Omega_i}^{\nn,s}(A)$ and $I-\prod_{i=\nu}^1(I-\rie_{\Omega_i}^{\nn,s}(A)A)$.
It follows that these matrices 
are not invertible, hence $A\not\in D^{AS,\nn,s}_{\Omega_1,\ldots,\Omega_\nu}$ and $A\not\in D^{MS,\nn,s}_{\Omega_1,\ldots,\Omega_\nu}$. Since this is true regardless of the considered matrix $A$, we infer that \eqref{D=D=0} holds.
\end{proof}

\section{Preliminaries}\label{sec:prel}

In this section, we collect the necessary preliminaries for proving the main results of this paper (Theorems~\ref{blockdiag(GLT)=GLT_blocktril(GLT)=GLT} and~\ref{AS(GLT)=GLT_MS(GLT)=GLT}).
We first introduce in Section~\ref{sec:conv_set} the appropriate notion of convergence for sequences of sets.
Then, in Section~\ref{sec:connect}, we establish a fundamental connection between the blockdiag and RIE operators.
Finally, in Section~\ref{sec:Tconnect}, we prove that the AS/MS preconditioners coincide with the BJ/BGS preconditioners in the case where the sets $\Omega_1,\ldots,\Omega_\nu$ appearing in Definition~\ref{amSp} form a partition of $[0,1]^d$.

\subsection{Convergence of sets}\label{sec:conv_set}

The notion of convergence for sequences of sets that we use in the main results of this paper is reported in Definition~\ref{c-set}.
This notion draws inspiration from the theory of reduced GLT sequences~\cite[Section~3]{rg}.
Throughout this paper, if $\Omega,\Omega'\subseteq\mathbb R^d$, we denote by $\Omega\triangle\Omega'$ their symmetric difference: $\Omega\triangle\Omega'=(\Omega\setminus\Omega')\cup(\Omega'\setminus\Omega)$.

\begin{definition}[\textbf{convergence of a sequence of sets with respect to a sequence of multi-indices}]\label{c-set}
Let $\{\nn=\nn(n)\}_n$ be a sequence of positive $d$-indices, let $\{\Omega_n\}_n$ be a sequence of subsets of $\mathbb R^d$, and let $\Omega\subseteq\mathbb R^d$. We say that $\Omega_n$ converges to $\Omega$ with respect to the sequence $\{\nn=\nn(n)\}_n$, and we write $\Omega_n\xrightarrow{{\rm w.r.t.}\,\nn}\Omega$, if $N_\nn^{\Omega_n\triangle\Omega}=o(N(\nn))$ as $n\to\infty$, i.e.,
\[ \lim_{n\to\infty}\frac{N_\nn^{\Omega_n\triangle\Omega}}{N(\nn)}=0. \]
If the previous limit relation holds for a subsequence of indices $n\in\II$, i.e.,
\[ \lim_{\substack{\vphantom{\int}n\to\infty\\n\in\II}}\frac{N_\nn^{\Omega_n\triangle\Omega}}{N(\nn)}=0, \]
then we say that $\Omega_n\xrightarrow{{\rm w.r.t.}\,\nn}\Omega$ as $n\to\infty$ in $\II$. Note that writing ``$\Omega_n\xrightarrow{{\rm w.r.t.}\,\nn}\Omega$'' is equivalent to writing ``$\Omega_n\xrightarrow{{\rm w.r.t.}\,\nn}\Omega$ as $n\to\infty$'', since the latter writing means that $n$ tends to $\infty$ freely, without being constrained to stay in a specific subset of indices $\II$.
\end{definition}

The class of sets we are mainly interested in are the so-called regular sets, which are defined below.

\begin{definition}[\textbf{regular set}]
We say that $\Omega\subset\mathbb R^d$ is a regular set if it is bounded and $\mu_d(\partial\Omega)=0$.
Note that the condition ``$\mu_d(\partial\Omega)=0$'' is equivalent to ``$\chi_\Omega$ is continuous a.e.\ on $\mathbb R^d$''.
Any regular set $\Omega\subset\mathbb R^d$ is measurable and we have $\mu_d(\Omega)=\mu_d(\accentset{\circ}\Omega)=\mu_d(\overline\Omega)<\infty$.
Within the theory of Riemann integration, a regular set is usually referred to as a Peano--Jordan measurable set.
\end{definition}

\begin{remark}\label{rankD-D}
Let $\{\nn=\nn(n)\}_n$ be a sequence of positive $d$-indices, let $\{\Omega_n\}_n$ be a sequence of subsets of $\mathbb R^d$, let $\Omega\subseteq\mathbb R^d$, and suppose that $\Omega_n\xrightarrow{{\rm w.r.t.}\,\nn}\Omega$. Then,
\[ {\rm rank}(D_\nn(\chi_{\Omega_n}-\chi_\Omega))=\biggl|\biggl\{\ii\in\{\bu,\ldots,\nn\}:\,\frac\ii\nn\in\Omega_n\triangle\Omega\biggr\}\biggr|=N_\nn^{\Omega_n\triangle\Omega}=o(N(\nn)). \]
This simple observation will be used several times in the proof of the main results.
\end{remark}

\subsection{Connection between the blockdiag and RIE operators}\label{sec:connect}

Theorem~\ref{rie-lemma} highlights a fundamental connection between the blockdiag and RIE operators.
This connection will play a central role in the proof of the second main result of this paper (Theorem~\ref{AS(GLT)=GLT_MS(GLT)=GLT}).
In particular, it will allow us to transfer the GLT analysis of the BJ/BGS preconditioners to the AS/MS preconditioners.
We remark that a simplified version of Theorem~\ref{rie-lemma} was the main ingredient for the proof of \cite[Theorems~4.2--4.3]{DDM-GLT-1d}.
We also remark that, due to its relevance, Theorem~\ref{rie-lemma} can be considered as a further main result of this paper in addition to Theorems~\ref{blockdiag(GLT)=GLT_blocktril(GLT)=GLT} and~\ref{AS(GLT)=GLT_MS(GLT)=GLT}.
The proof of Theorem~\ref{rie-lemma} requires a basic property of restriction matrices and expansion operators reported in Lemma~\ref{r-lemma-beta}.

\begin{lemma}\label{r-lemma-beta}
Let $d,s,\nu$ be positive integers, let $\nn\in\mathbb N^d$, and let $\{\Omega_1,\ldots,\Omega_\nu\}$ be a partition of $[0,1]^d$. Then, for every $i,j=1,\ldots,\nu$,
\begin{equation}\label{RiRj}
R_{\Omega_i}^{\nn,s}(R_{\Omega_j}^{\nn,s})^T=\begin{cases}I, & \mbox{if}\ i=j,\\
O, & \mbox{if}\ i\ne j.
\end{cases}
\end{equation}
Moreover, for every $i,j,h,k=1,\ldots,\nu$, every $B\in\mathbb C^{N_\nn^{\Omega_i}s\times N_\nn^{\Omega_j}s}$ and every $C\in\mathbb C^{N_\nn^{\Omega_h}s\times N_\nn^{\Omega_k}s}$,
\begin{equation}\label{EijEhk}
E_{\Omega_i,\Omega_j}^{\nn,s,s}(B)E_{\Omega_h,\Omega_k}^{\nn,s,s}(C)=\begin{cases}E_{\Omega_i,\Omega_k}^{\nn,s,s}(BC), & \mbox{if}\ j=h,\\
O, & \mbox{if}\ j\ne h.
\end{cases}
\end{equation}
\end{lemma}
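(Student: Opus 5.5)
The plan is to establish \eqref{RiRj} first and then obtain \eqref{EijEhk} from it by a purely algebraic manipulation, using only the definitions of restriction matrices and expansion operators (Definitions~\ref{R-matrix} and~\ref{ZRE-operators}).

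For \eqref{RiRj}, I will use the tensor structure $R_{\Omega}^{\nn,s}=R_{\Omega}^{\nn,1}\otimes I_s$, exactly as in the proof of Lemma~\ref{r-lemma}. This gives
\[ R_{\Omega_i}^{\nn,s}(R_{\Omega_j}^{\nn,s})^T=R_{\Omega_i}^{\nn,1}(R_{\Omega_j}^{\nn,1})^T\otimes I_s=\bigl[(\ee_\ii^{(\nn)})^T\ee_\jj^{(\nn)}\bigr]_{\ii\in\II_\nn^{\Omega_i}}^{\jj\in\II_\nn^{\Omega_j}}\otimes I_s=\bigl[\delta_{\ii\jj}\bigr]_{\ii\in\II_\nn^{\Omega_i}}^{\jj\in\II_\nn^{\Omega_j}}\otimes I_s. \]
If $i=j$, this is the identity, which is the first equation in \eqref{basicReq}. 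If $i\ne j$, then $\Omega_i\cap\Omega_j=\emptyset$ because $\{\Omega_1,\ldots,\Omega_\nu\}$ is a partition. Hence $\II_\nn^{\Omega_i}\cap\II_\nn^{\Omega_j}=\emptyset$, so $\delta_{\ii\jj}=0$ for every $\ii\in\II_\nn^{\Omega_i}$ and $\jj\in\II_\nn^{\Omega_j}$, and the matrix is $O$.

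For \eqref{EijEhk}, I will expand the definition of the expansion operator:
\[ E_{\Omega_i,\Omega_j}^{\nn,s,s}(B)\,E_{\Omega_h,\Omega_k}^{\nn,s,s}(C)=(R_{\Omega_i}^{\nn,s})^T B\,\bigl[R_{\Omega_j}^{\nn,s}(R_{\Omega_h}^{\nn,s})^T\bigr]\,C\,R_{\Omega_k}^{\nn,s}. \]
I then apply \eqref{RiRj} to the middle bracket. If $j=h$, the bracket is $I$, and the product becomes $(R_{\Omega_i}^{\nn,s})^T(BC)R_{\Omega_k}^{\nn,s}=E_{\Omega_i,\Omega_k}^{\nn,s,s}(BC)$. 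If $j\ne h$, the bracket is $O$, so the whole product vanishes.

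A small point to check is that the sizes are compatible when $j=h$. Since $N_\nn^{\Omega_j}=N_\nn^{\Omega_h}$ in that case, $BC$ is well defined and has size $N_\nn^{\Omega_i}s\times N_\nn^{\Omega_k}s$, as required for $E_{\Omega_i,\Omega_k}^{\nn,s,s}$. The only substantive ingredient in the whole argument is the disjointness of the grid index sets, which follows directly from the partition assumption, so I do not expect any real obstacle.
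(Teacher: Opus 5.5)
Your proposal is correct and matches the paper's proof essentially step by step. For \eqref{RiRj}, you use the reduction $R_{\Omega}^{\nn,s}=R_{\Omega}^{\nn,1}\otimes I_s$ together with the disjointness of $\II_\nn^{\Omega_i}$ and $\II_\nn^{\Omega_j}$ for $i\ne j$ (with the case $i=j$ given by Lemma~\ref{r-lemma}). For \eqref{EijEhk}, you expand the product of the two expansion operators and apply \eqref{RiRj} to the middle factor $R_{\Omega_j}^{\nn,s}(R_{\Omega_h}^{\nn,s})^T$.
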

\begin{proof}
Let $1\le i,j\le\nu$. If $i=j$, then \eqref{RiRj} follows immediately from Lemma~\ref{r-lemma}. If $i\ne j$, then \eqref{RiRj} follows from Definition~\ref{R-matrix} and from standard block matrix multiplications. Alternatively, we can prove it by using the properties of tensor products:
\begin{align*}
R_{\Omega_i}^{\nn,s}(R_{\Omega_j}^{\nn,s})^T&=(R_{\Omega_i}^{\nn,1}\otimes I_s)(R_{\Omega_j}^{\nn,1}\otimes I_s)^T=(R_{\Omega_i}^{\nn,1}\otimes I_s)((R_{\Omega_j}^{\nn,1})^T\otimes I_s)=R_{\Omega_i}^{\nn,1}(R_{\Omega_j}^{\nn,1})^T\otimes I_s\\
&=[(\ee_\ii^{(\nn)})^T]_{\ii\in\II_\nn^{\Omega_i}}[\ee_\jj^{(\nn)}]^{\jj\in\II_\nn^{\Omega_j}}\otimes I_s=\begin{bmatrix}0\end{bmatrix}_{\ii\in\II_\nn^{\Omega_i}}^{\jj\in\II_\nn^{\Omega_j}}\otimes I_s=O\otimes I_s=O,
\end{align*}
where the third-to-last equality follows from the fact that $(\ee_\ii^{(\nn)})^T\ee_\jj^{(\nn)}=0$ for every $\ii\in\II_\nn^{\Omega_i}$ and every $\jj\in\II_\nn^{\Omega_j}$, because $\II_\nn^{\Omega_i}$ and $\II_\nn^{\Omega_j}$ are disjoint due to the assumption that $\{\Omega_1,\ldots,\Omega_\nu\}$ is a partition of $[0,1]^d$.

To prove \eqref{EijEhk}, let $1\le i,j,h,k\le\nu$, let $B\in\mathbb C^{N_\nn^{\Omega_i}s\times N_\nn^{\Omega_j}s}$, and let $C\in\mathbb C^{N_\nn^{\Omega_h}s\times N_\nn^{\Omega_k}s}$. By definition of expansion operators, we have
\[ E_{\Omega_i,\Omega_j}^{\nn,s,s}(B)E_{\Omega_h,\Omega_k}^{\nn,s,s}(C)=(R_{\Omega_i}^{\nn,s})^T\,B\,R_{\Omega_j}^{\nn,s}(R_{\Omega_h}^{\nn,s})^T\,C\,R_{\Omega_k}^{\nn,s}. \]
By \eqref{RiRj}, this is equal to $O$ if $j\ne h$ and is equal to $(R_{\Omega_i}^{\nn,s})^T\,BC\,R_{\Omega_k}^{\nn,s}=E_{\Omega_i,\Omega_k}^{\nn,s,s}(BC)$ if $j=h$.
\end{proof}

\begin{theorem}\label{rie-lemma}
Let $d,s,\nu$ be positive integers, let $\nn\in\mathbb N^d$, and let $\{\Omega_1,\ldots,\Omega_\nu\}$ be a partition of $[0,1]^d$. Then, for every $A\in\mathbb C^{N(\nn)s\times N(\nn)s}$ such that $R_{\Omega_1,\Omega_1}^{\nn,s,s}(A),\ldots,R_{\Omega_\nu,\Omega_\nu}^{\nn,s,s}(A)$ are invertible, it holds that $\mathop{\rm blockdiag}^{\nn,s,s}_{\Omega_1,\ldots,\Omega_\nu}(A)$ is invertible,
\begin{equation}\label{scaturigine}
\biggl[\mathop{\rm blockdiag}^{\nn,s,s}_{\Omega_1,\ldots,\Omega_\nu}(A)\biggr]^{-1}=\sum_{i=1}^{\nu}\rie_{\Omega_i}^{\nn,s}(A),
\end{equation}
and, for $i=1,\ldots,\nu$,
\begin{align}
\rie_{\Omega_i}^{\nn,s}(A)&=D_\nn(\chi_{\Omega_i}I_s)\biggl[\mathop{\rm blockdiag}^{\nn,s,s}_{\Omega_1,\ldots,\Omega_\nu}(A)\biggr]^{-1}\label{er1}\\
&=\biggl[\mathop{\rm blockdiag}^{\nn,s,s}_{\Omega_1,\ldots,\Omega_\nu}(A)\biggr]^{-1}D_\nn(\chi_{\Omega_i}I_s)\label{er2}\\
&=D_\nn(\chi_{\Omega_i}I_s)\biggl[\mathop{\rm blockdiag}^{\nn,s,s}_{\Omega_1,\ldots,\Omega_\nu}(A)\biggr]^{-1}D_\nn(\chi_{\Omega_i}I_s)\label{er3}\\
&=Z_{\Omega_i,\Omega_i}^{\nn,s,s}\biggl(\biggl[\mathop{\rm blockdiag}^{\nn,s,s}_{\Omega_1,\ldots,\Omega_\nu}(A)\biggr]^{-1}\biggr).\label{er4}
\end{align}
\end{theorem}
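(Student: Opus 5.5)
The plan is to express everything through the expansion operators and then use the multiplication rule \eqref{EijEhk} of Lemma~\ref{r-lemma-beta}. Abbreviate $B_i=R_{\Omega_i,\Omega_i}^{\nn,s,s}(A)$, which is invertible by hypothesis, and $E_i(\cdot)=E_{\Omega_i,\Omega_i}^{\nn,s,s}(\cdot)$. By \eqref{ER=Z} of Lemma~\ref{zre-eqs}, $Z_{\Omega_i,\Omega_i}^{\nn,s,s}(A)=E_i(B_i)$. Hence, by Definition~\ref{bJGSp},
\[
\mathop{\rm blockdiag}^{\nn,s,s}_{\Omega_1,\ldots,\Omega_\nu}(A)=\sum_{i=1}^\nu E_i(B_i).
\]
By Definition~\ref{rie}, $\rie_{\Omega_i}^{\nn,s}(A)=E_i(B_i^{-1})$.

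\textbf{Inverse formula \eqref{scaturigine}.} Multiply the two sums and apply \eqref{EijEhk}. Only the diagonal terms survive:
\[
\Bigl[\sum_i E_i(B_i)\Bigr]\Bigl[\sum_j E_j(B_j^{-1})\Bigr]=\sum_i E_i(I_{N_\nn^{\Omega_i}s})=\sum_i (R_{\Omega_i}^{\nn,s})^TR_{\Omega_i}^{\nn,s}=\sum_i D_\nn(\chi_{\Omega_i}I_s).
\]
The middle equality uses Definition~\ref{ZRE-operators} and the last uses Lemma~\ref{r-lemma}. Since $\{\Omega_1,\ldots,\Omega_\nu\}$ is a partition of $[0,1]^d$, we have $\sum_i\chi_{\Omega_i}=1$ on $[0,1]^d$, so the right-hand side equals $D_\nn(I_s)=I$. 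The matrices are square, so this one-sided identity gives invertibility of $\mathop{\rm blockdiag}^{\nn,s,s}_{\Omega_1,\ldots,\Omega_\nu}(A)$ together with \eqref{scaturigine}.

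\textbf{Formulas \eqref{er1}--\eqref{er4}.} Left-multiply \eqref{scaturigine} by $D_\nn(\chi_{\Omega_i}I_s)=E_i(I)$. Using \eqref{EijEhk} again, $E_i(I)E_j(B_j^{-1})$ equals $E_i(B_i^{-1})$ when $j=i$ and $O$ otherwise. This yields \eqref{er1}. Right-multiplying instead, $E_j(B_j^{-1})E_i(I)$ vanishes for $j\ne i$, which yields \eqref{er2}. For \eqref{er3}, apply both multiplications; equivalently, right-multiply \eqref{er1} by $D_\nn(\chi_{\Omega_i}I_s)$ and use \eqref{er2} together with idempotency $D_\nn(\chi_{\Omega_i}I_s)^2=D_\nn(\chi_{\Omega_i}I_s)$. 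Finally, \eqref{er4} is just \eqref{er3} rewritten via the definition of the zeroing operator $Z_{\Omega_i,\Omega_i}^{\nn,s,s}$.

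The only step needing care is the identification $\sum_i D_\nn(\chi_{\Omega_i}I_s)=I$. It relies on the partition property, which ensures that every grid point $\ii/\nn\in(0,1]^d$ lies in exactly one $\Omega_i$; this is also the property behind \eqref{RiRj}. Beyond that, no real obstacle is expected: the argument is bookkeeping with Lemmas~\ref{r-lemma}, \ref{zre-eqs} and~\ref{r-lemma-beta}.
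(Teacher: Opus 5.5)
Your proposal is correct and follows essentially the same route as the paper: express $\mathop{\rm blockdiag}$ and the RIE matrices through expansion operators via \eqref{ER=Z}, then multiply and kill the off-diagonal terms with Lemma~\ref{r-lemma-beta} to obtain $\sum_i D_\nn(\chi_{\Omega_i}I_s)=I$, and finally left/right-multiply by $D_\nn(\chi_{\Omega_i}I_s)=E_{\Omega_i,\Omega_i}^{\nn,s,s}(I)$ to get \eqref{er1}--\eqref{er4}. The only cosmetic difference is that you invoke \eqref{EijEhk} in the proof of \eqref{er1}, where the paper applies \eqref{RiRj} directly to the restriction matrices.
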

\begin{proof}
Let $A\in\mathbb C^{N(\nn)s\times N(\nn)s}$ be such that $R_{\Omega_1,\Omega_1}^{\nn,s,s}(A),\ldots,R_{\Omega_\nu,\Omega_\nu}^{\nn,s,s}(A)$ are invertible.
In order to prove that $\mathop{\rm blockdiag}^{\nn,s,s}_{\Omega_1,\ldots,\Omega_\nu}(A)$ is invertible and that \eqref{scaturigine} holds, it suffices to show that $\mathop{\rm blockdiag}^{\nn,s,s}_{\Omega_1,\ldots,\Omega_\nu}(A)$ multiplied with the right-hand side of \eqref{scaturigine} yields the identity matrix. This is done by direct computation:
\begin{align*}
\mathop{\rm blockdiag}^{\nn,s,s}_{\Omega_1,\ldots,\Omega_\nu}(A)\sum_{i=1}^{\nu}\rie_{\Omega_i}^{\nn,s}(A)
&=\sum_{i=1}^{\nu}Z_{\Omega_i,\Omega_i}^{\nn,s,s}(A)\sum_{i=1}^{\nu}E_{\Omega_i,\Omega_i}^{\nn,s,s}({R_{\Omega_i,\Omega_i}^{\nn,s,s}(A)}^{-1})\qquad\mbox{\footnotesize(by Definitions~\ref{bJGSp}--\ref{rie})}\\
&=\sum_{i=1}^{\nu}E_{\Omega_i,\Omega_i}^{\nn,s,s}(R_{\Omega_i,\Omega_i}^{\nn,s,s}(A))\sum_{i=1}^{\nu}E_{\Omega_i,\Omega_i}^{\nn,s,s}({R_{\Omega_i,\Omega_i}^{\nn,s,s}(A)}^{-1})\qquad\mbox{\footnotesize(by \eqref{ER=Z})}\\
&=\sum_{i,j=1}^{\nu}E_{\Omega_i,\Omega_i}^{\nn,s,s}(R_{\Omega_i,\Omega_i}^{\nn,s,s}(A))E_{\Omega_j,\Omega_j}^{\nn,s,s}({R_{\Omega_j,\Omega_j}^{\nn,s,s}(A)}^{-1})\\
&=\sum_{i=1}^{\nu}E_{\Omega_i,\Omega_i}^{\nn,s,s}(I)\qquad\mbox{\footnotesize(by Lemma~\ref{r-lemma-beta})}\\
&=\sum_{i=1}^{\nu}(R_{\Omega_i}^{\nn,s})^TR_{\Omega_i}^{\nn,s}\qquad\mbox{\footnotesize(by Definition~\ref{ZRE-operators})}\\
&=\sum_{i=1}^{\nu}D_\nn(\chi_{\Omega_i}I_s)\qquad\mbox{\footnotesize(by Lemma~\ref{r-lemma})}\\
&=D_\nn\Biggl(\sum_{i=1}^\nu\chi_{\Omega_i}I_s\Biggr)\qquad\mbox{\footnotesize(by linearity of the operator $a\mapsto D_\nn(aI_s)$)}\\
&=D_\nn(I_s)\qquad\mbox{\footnotesize($\sum_{i=1}^\nu\chi_{\Omega_i}=1$ identically because $\{\Omega_1,\ldots,\Omega_\nu\}$ is a partition of $[0,1]^d$)}\\
&=I_{N(\nn)s}.
\end{align*}
Now, we fix $1\le i\le\nu$ and we prove \eqref{er1}--\eqref{er4}. Actually, we only prove \eqref{er1}, because the proofs of \eqref{er2}--\eqref{er3} are analogous to that of \eqref{er1}, while \eqref{er4} follows immediately from \eqref{er3} and Definition~\ref{ZRE-operators}. We have
{\allowdisplaybreaks\begin{align*}
D_\nn(\chi_{\Omega_i}I_s)\biggl[\mathop{\rm blockdiag}^{\nn,s,s}_{\Omega_1,\ldots,\Omega_\nu}(A)\biggr]^{-1}&=D_\nn(\chi_{\Omega_i}I_s)\sum_{j=1}^\nu\rie_{\Omega_j}^{\nn,s}(A)\qquad\mbox{\footnotesize(by \eqref{scaturigine})}\\
&=\sum_{j=1}^\nu D_\nn(\chi_{\Omega_i}I_s)E_{\Omega_j,\Omega_j}^{\nn,s,s}({R_{\Omega_j,\Omega_j}^{\nn,s,s}(A)}^{-1})\qquad\mbox{\footnotesize(by Definition~\ref{rie})}\\
&=\sum_{j=1}^\nu (R_{\Omega_i}^{\nn,s})^TR_{\Omega_i}^{\nn,s}(R_{\Omega_j}^{\nn,s})^T{R_{\Omega_j,\Omega_j}^{\nn,s,s}(A)}^{-1}R_{\Omega_j}^{\nn,s}\qquad\mbox{\footnotesize(by Lemma~\ref{r-lemma} and Definition~\ref{ZRE-operators})}\\
&=(R_{\Omega_i}^{\nn,s})^T{R_{\Omega_i,\Omega_i}^{\nn,s,s}(A)}^{-1}R_{\Omega_i}^{\nn,s}\qquad\mbox{\footnotesize(by Lemma~\ref{r-lemma-beta})}\\
&=E_{\Omega_i,\Omega_i}^{\nn,s,s}({R_{\Omega_i,\Omega_i}^{\nn,s,s}(A)}^{-1})\qquad\mbox{\footnotesize(by Definition~\ref{ZRE-operators})}\\
&=\rie_{\Omega_i}^{\nn,s}(A)\qquad\mbox{\footnotesize(by Definition~\ref{rie}).} \qedhere
\end{align*}}%
\end{proof}

\subsection{Connection between the BJ/BGS and AS/MS preconditioners}\label{sec:Tconnect}

Theorem~\ref{S->JGS} shows that, in the case where the sets $\Omega_1,\ldots,\Omega_\nu$ appearing in Definition~\ref{amSp} form a partition of $[0,1]^d$, the AS preconditioner coincides with the BJ preconditioner and the MS preconditioner coincides with the BGS preconditioner.
Although this connection between the BJ/BGS and AS/MS preconditioners is known within the DDM community, it is necessary to prove it starting from the new notations and definitions presented in Section~\ref{bJ/bGS/AS/MS-def-e}. We remark that, due to its relevance, Theorem~\ref{S->JGS} can be considered as a further main result of this paper in addition to Theorems~\ref{blockdiag(GLT)=GLT_blocktril(GLT)=GLT} and~\ref{AS(GLT)=GLT_MS(GLT)=GLT}.

\begin{theorem}\label{S->JGS}
Let $d,s,\nu$ be positive integers, let $\nn\in\mathbb N^d$, and let $\{\Omega_1,\ldots,\Omega_\nu\}$ be a partition of $[0,1]^d$. Then,
\begin{alignat}{3}
P_{\Omega_1,\ldots,\Omega_\nu}^{AS,\nn,s}(A)&=P_{\Omega_1,\ldots,\Omega_\nu}^{BJ,\nn,s,s}(A),&\qquad A&\in D^{AS,\nn,s}_{\Omega_1,\ldots,\Omega_\nu},\label{AS=BJ}\\[5pt]
P_{\Omega_1,\ldots,\Omega_\nu}^{MS,\nn,s}(A)&=P_{\Omega_1,\ldots,\Omega_\nu}^{BGS,\nn,s,s}(A),&\qquad A&\in D^{MS,\nn,s}_{\Omega_1,\ldots,\Omega_\nu}.\label{MS=BGS}
\end{alignat}
\end{theorem}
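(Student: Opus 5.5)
The AS identity \eqref{AS=BJ} follows almost directly from Theorem~\ref{rie-lemma}. If $A\in D^{AS,\nn,s}_{\Omega_1,\ldots,\Omega_\nu}$, then in particular $R_{\Omega_i,\Omega_i}^{\nn,s,s}(A)$ is invertible for every $i$. Theorem~\ref{rie-lemma} then gives that $\mathop{\rm blockdiag}^{\nn,s,s}_{\Omega_1,\ldots,\Omega_\nu}(A)$ is invertible and that its inverse equals $\sum_{i=1}^\nu\rie_{\Omega_i}^{\nn,s}(A)$. Inverting both sides of \eqref{scaturigine} and recalling \eqref{bJ-pop} yields $P^{AS,\nn,s}_{\Omega_1,\ldots,\Omega_\nu}(A)=P^{BJ,\nn,s,s}_{\Omega_1,\ldots,\Omega_\nu}(A)$.

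For \eqref{MS=BGS}, set $L=P^{BGS,\nn,s,s}_{\Omega_1,\ldots,\Omega_\nu}(A)=\sum_{j\le i}Z_{\Omega_i,\Omega_j}^{\nn,s,s}(A)$ and $M=I-\prod_{i=\nu}^1(I-\rie_{\Omega_i}^{\nn,s}(A)A)$. Since $M$ is invertible by the definition of the domain, it suffices to prove $LM=A$, as suggested by the footnote in Example~\ref{illustration''}. Write $A_{ij}=R_{\Omega_i,\Omega_j}^{\nn,s,s}(A)$ and $A=\sum_{i,j}E_{\Omega_i,\Omega_j}^{\nn,s,s}(A_{ij})$ (Remark~\ref{A-BJ-BGS} together with \eqref{ER=Z}). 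Then Lemma~\ref{r-lemma-beta} gives $\rie_{\Omega_k}^{\nn,s}(A)A=\sum_jE_{\Omega_k,\Omega_j}^{\nn,s,s}(A_{kk}^{-1}A_{kj})$. After permuting by $\Pi^\nn_{\Omega_1,\ldots,\Omega_\nu}\otimes I_s$, everything becomes a $\nu\times\nu$ block matrix: $L$ is the block lower triangular part of $A=[A_{ij}]$, and $I-\rie_{\Omega_k}^{\nn,s}(A)A$ is the identity with block row $k$ replaced by $[-A_{kk}^{-1}A_{kj}]_j$, with a zero in the diagonal position $(k,k)$.

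I would then prove by induction on $m$ that
\[ A_{kk}\,(\text{row }k\text{ of }M)=\text{row }k\text{ of }A-\sum_{j<k}A_{kj}(\text{row }j\text{ of }M). \]
This relation is exactly the statement that block row $k$ of $LM$ equals block row $k$ of $A$. The natural tool is the product $G_m=\prod_{i=m}^1(I-\rie_{\Omega_i}^{\nn,s}(A)A)$. Left-multiplying $G_{m-1}$ by the $m$th factor only changes block row $m$, which becomes $-\sum_jA_{mm}^{-1}A_{mj}(\text{row }j\text{ of }G_{m-1})$. Rows $j<m$ of $G_{m-1}$ are already final, so they equal the corresponding rows of $G_\nu$. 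Rows $j\ge m$ of $G_{m-1}$ are identity rows; the term $j=m$ vanishes because the diagonal block is zero after the first factor has acted, and the terms $j>m$ give $-A_{mm}^{-1}A_{mj}$. Consequently row $m$ of $M=I-G_\nu$ equals $e_m+\sum_{j<m}A_{mm}^{-1}A_{mj}(\text{row }j\text{ of }G_\nu)+\sum_{j>m}A_{mm}^{-1}A_{mj}e_j$. Multiplying by $A_{mm}$ and substituting $\text{row }j\text{ of }G_\nu=e_j-\text{row }j\text{ of }M$ for $j<m$ gives the desired identity.

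The main obstacle is bookkeeping in this last step, in particular checking that the $(m,m)$ block entry of row $m$ behaves correctly. An alternative is the standard identity $I-\prod(I-B_iA)=$ (BGS iteration), i.e. $M=L^{-1}A$ for the block Gauss--Seidel splitting, which shows directly that $LM=A$.
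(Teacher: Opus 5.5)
Your proof is correct, and the AS part is exactly the paper's argument. For the MS part, you and the paper both reduce to $LM=A$, where $L$ is the BGS matrix, $M=I-F_\nu\cdots F_1$ and $F_k=I-\rie_{\Omega_k}^{\nn,s}(A)A$, but the bookkeeping differs.

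\textbf{The paper's route.} The paper never computes $M$. It multiplies $L$ on the right by $F_\nu,F_{\nu-1},\ldots,F_1$ in turn and shows by induction that
\[ LF_\nu\cdots F_k=\sum_{1\le j\le i\le k-1}Z_{\Omega_i,\Omega_j}^{\nn,s,s}(A)-\sum_{k\le i<j\le\nu}Z_{\Omega_i,\Omega_j}^{\nn,s,s}(A). \]
The point is that block column $k$ of the current partial product contains only $A_{kk}$, in the diagonal position. So right multiplication by $F_k$ just subtracts block row $k$ of $A$, and the process telescopes to $L-A$. This stays entirely within the expansion-operator calculus of Lemma~\ref{r-lemma-beta}, needs no permutation, and gives a closed form for every intermediate product.

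\textbf{Your route.} You permute to $\nu\times\nu$ block form and build $F_\nu\cdots F_1$ by left multiplications, each of which overwrites one block row. You then check row by row that $\sum_{j\le m}A_{mj}(\text{row }j\text{ of }M)=\text{row }m\text{ of }A$. This is precisely block forward substitution for $L^{-1}A$, so it makes the Gauss--Seidel interpretation of MS on a partition transparent. The cost is heavier index bookkeeping and no closed form for the intermediate products.

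\textbf{Two small points.} First, since the $(m,m)$ block of $F_m$ is $O$, the update of block row $m$ is $-\sum_{j\ne m}A_{mm}^{-1}A_{mj}(\text{row }j\text{ of }G_{m-1})$ from the outset. Your sum over all $j$, followed by the remark that the $j=m$ term vanishes ``after the first factor has acted,'' is muddled, though the formula you end up with is right. Second, the closing ``alternative'' that $M=L^{-1}A$ is a standard identity just restates what is to be proved, so it should not be offered as a proof.
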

\begin{proof}[Proof of \eqref{AS=BJ}]
Suppose $A\in D^{AS,\nn,s}_{\Omega_1,\ldots,\Omega_\nu}$. Then, by definition of $D^{AS,\nn,s}_{\Omega_1,\ldots,\Omega_\nu}$, the matrices $R_{\Omega_1,\Omega_1}^{\nn,s,s}(A),\ldots,R_{\Omega_\nu,\Omega_\nu}^{\nn,s,s}(A)$ are invertible.
We can therefore apply Theorem~\ref{rie-lemma} and conclude that $\mathop{\rm blockdiag}^{\nn,s,s}_{\Omega_1,\ldots,\Omega_\nu}(A)$ is invertible with inverse given by \eqref{scaturigine}. Thus,
\begin{align*}
P_{\Omega_1,\ldots,\Omega_\nu}^{BJ,\nn,s,s}&=\mathop{\rm blockdiag}^{\nn,s,s}_{\Omega_1,\ldots,\Omega_\nu}(A)\qquad\mbox{\footnotesize(by Definition~\ref{bJGSp})}\\
&=\Biggl[\biggl[\mathop{\rm blockdiag}^{\nn,s,s}_{\Omega_1,\ldots,\Omega_\nu}(A)\biggr]^{-1}\Biggr]^{-1}=\Biggl[\sum_{i=1}^\nu \rie_{\Omega_i}^{\nn,s}(A)\Biggr]^{-1}\qquad\mbox{\footnotesize(by Theorem~\ref{rie-lemma})}\\
&=P_{\Omega_1,\ldots,\Omega_\nu}^{AS,\nn,s}(A)\qquad\mbox{\footnotesize(by Definition~\ref{amSp}).}
\end{align*}
{\em Proof of \eqref{MS=BGS}.} Suppose $A\in D^{MS,\nn,s}_{\Omega_1,\ldots,\Omega_\nu}$. 
By Definitions~\ref{bJGSp} and~\ref{amSp}, we have
\begin{align*}
P_{\Omega_1,\ldots,\Omega_\nu}^{BGS,\nn,s,s}(A)&=\sum_{1\le j\le i\le\nu}Z_{\Omega_i,\Omega_j}^{\nn,s,s}(A),\\
P_{\Omega_1,\ldots,\Omega_\nu}^{MS,\nn,s}(A)&=A\Biggl[I-\prod_{i=\nu}^1\Bigl(I-\rie_{\Omega_i}^{\nn,s}(A)A\Bigr)\Biggr]^{-1}.
\end{align*}
We want to show that one of the following equivalent equations is satisfied:
\begin{align}
\notag P_{\Omega_1,\ldots,\Omega_\nu}^{BGS,\nn,s,s}(A)=P_{\Omega_1,\ldots,\Omega_\nu}^{MS,\nn,s}(A)&\ \iff\ P_{\Omega_1,\ldots,\Omega_\nu}^{BGS,\nn,s}(A)=A\Biggl[I-\prod_{i=\nu}^1\Bigl(I-\rie_{\Omega_i}^{\nn,s}(A)A\Bigr)\Biggr]^{-1}\\
\notag&\ \iff\ P_{\Omega_1,\ldots,\Omega_\nu}^{BGS,\nn,s}(A)\Biggl[I-\prod_{i=\nu}^1\Bigl(I-\rie_{\Omega_i}^{\nn,s}(A)A\Bigr)\Biggr]=A\\
\notag&\ \iff\ P_{\Omega_1,\ldots,\Omega_\nu}^{BGS,\nn,s,s}(A)\prod_{i=\nu}^1\underbrace{\Bigl(I-\rie_{\Omega_i}^{\nn,s}(A)A\Bigr)}_{F_i}=P_{\Omega_1,\ldots,\Omega_\nu}^{BGS,\nn,s}(A)-A\\
&\ \iff\ P_{\Omega_1,\ldots,\Omega_\nu}^{BGS,\nn,s,s}(A)\prod_{i=\nu}^1F_i=-\sum_{1\le i<j\le\nu}Z_{\Omega_i,\Omega_j}^{\nn,s,s}(A),\label{ultravalid}
\end{align}
where in the last equivalence we used the relation between $A$ and $P_{\Omega_1,\ldots,\Omega_\nu}^{BGS,\nn,s}(A)$ highlighted in Remark~\ref{A-BJ-BGS}.
We prove the last equation \eqref{ultravalid}. To this end, we define
\[ G_k=P_{\Omega_1,\ldots,\Omega_\nu}^{BGS,\nn,s,s}(A)\prod_{i=\nu}^kF_i,\qquad k=\nu,\ldots,1. \]
We show by induction that, for every $k=\nu,\ldots,1$,
\begin{equation}\label{polirat}
G_k=\sum_{1\le j\le i\le k-1}Z_{\Omega_i,\Omega_j}^{\nn,s,s}(A)-\sum_{k\le i<j\le\nu}Z_{\Omega_i,\Omega_j}^{\nn,s,s}(A).
\end{equation}
Once this is done, by selecting $k=1$ in \eqref{polirat}, we obtain
\[ G_1=-\sum_{1\le i<j\le\nu}Z_{\Omega_i,\Omega_j}^{\nn,s,s}(A), \]
which is precisely \eqref{ultravalid}. It only remains to prove \eqref{polirat} by induction on $k=\nu,\ldots,1$.

\medskip

\noindent{\em Case $k=\nu$.} We have
\begin{align*}
G_\nu&=P_{\Omega_1,\ldots,\Omega_\nu}^{BGS,\nn,s,s}(A)F_\nu=P_{\Omega_1,\ldots,\Omega_\nu}^{BGS,\nn,s,s}(A)(I-\rie_{\Omega_\nu}^{\nn,s}(A)A)\qquad\mbox{\footnotesize(by definition of $F_\nu$)}\\
&=P_{\Omega_1,\ldots,\Omega_\nu}^{BGS,\nn,s,s}(A)-P_{\Omega_1,\ldots,\Omega_\nu}^{BGS,\nn,s,s}(A)E_{\Omega_\nu,\Omega_\nu}^{\nn,s,s}({R_{\Omega_\nu,\Omega_\nu}^{\nn,s,s}(A)}^{-1})A\qquad\mbox{\footnotesize(by definition of $\rie_{\Omega_\nu}^{\nn,s}(A)$)}\\
&=P_{\Omega_1,\ldots,\Omega_\nu}^{BGS,\nn,s,s}(A)-\Biggl[\sum_{1\le j\le i\le\nu}Z_{\Omega_i,\Omega_j}^{\nn,s,s}(A)\Biggr]E_{\Omega_\nu,\Omega_\nu}^{\nn,s,s}({R_{\Omega_\nu,\Omega_\nu}^{\nn,s,s}(A)}^{-1})A\qquad\mbox{\footnotesize(by definition of $P_{\Omega_1,\ldots,\Omega_\nu}^{BGS,\nn,s,s}(A)$)}\\
&=P_{\Omega_1,\ldots,\Omega_\nu}^{BGS,\nn,s,s}(A)-\Biggl[\sum_{1\le j\le i\le\nu}E_{\Omega_i,\Omega_j}^{\nn,s,s}(R_{\Omega_i,\Omega_j}^{\nn,s,s}(A))\Biggr]E_{\Omega_\nu,\Omega_\nu}^{\nn,s,s}({R_{\Omega_\nu,\Omega_\nu}^{\nn,s,s}(A)}^{-1})A\qquad\mbox{\footnotesize(by \eqref{ER=Z})}\\
&=P_{\Omega_1,\ldots,\Omega_\nu}^{BGS,\nn,s,s}(A)-\Biggl[\sum_{1\le j\le i\le\nu}\underbrace{E_{\Omega_i,\Omega_j}^{\nn,s,s}(R_{\Omega_i,\Omega_j}^{\nn,s,s}(A))E_{\Omega_\nu,\Omega_\nu}^{\nn,s,s}({R_{\Omega_\nu,\Omega_\nu}^{\nn,s,s}(A)}^{-1})}_{\textup{this is zero if $j\ne\nu$ by Lemma~\ref{r-lemma-beta}}}\Biggr]A\\
&=P_{\Omega_1,\ldots,\Omega_\nu}^{BGS,\nn,s,s}(A)-E_{\Omega_\nu,\Omega_\nu}^{\nn,s,s}(I)A\qquad\mbox{\footnotesize(by Lemma~\ref{r-lemma-beta})}\\
&=P_{\Omega_1,\ldots,\Omega_\nu}^{BGS,\nn,s,s}(A)-E_{\Omega_\nu,\Omega_\nu}^{\nn,s,s}(I)\sum_{i,j=1}^\nu Z_{\Omega_i,\Omega_j}^{\nn,s,s}(A)\qquad\mbox{\footnotesize(by Remark~\ref{A-BJ-BGS})}\\
&=P_{\Omega_1,\ldots,\Omega_\nu}^{BGS,\nn,s,s}(A)-E_{\Omega_\nu,\Omega_\nu}^{\nn,s,s}(I)\sum_{i,j=1}^\nu E_{\Omega_i,\Omega_j}^{\nn,s,s}(R_{\Omega_i,\Omega_j}^{\nn,s,s}(A))\qquad\mbox{\footnotesize(by \eqref{ER=Z})}\\
&=P_{\Omega_1,\ldots,\Omega_\nu}^{BGS,\nn,s,s}(A)-\sum_{i,j=1}^\nu\underbrace{E_{\Omega_\nu,\Omega_\nu}^{\nn,s,s}(I)E_{\Omega_i,\Omega_j}^{\nn,s,s}(R_{\Omega_i,\Omega_j}^{\nn,s,s}(A))}_{\textup{this is zero if $i\ne\nu$ by Lemma~\ref{r-lemma-beta}}}\\
&=P_{\Omega_1,\ldots,\Omega_\nu}^{BGS,\nn,s,s}(A)-\sum_{j=1}^\nu E_{\Omega_\nu,\Omega_j}^{\nn,s,s}(R_{\Omega_\nu,\Omega_j}^{\nn,s,s}(A))\qquad\mbox{\footnotesize(by Lemma~\ref{r-lemma-beta})}\\
&=\sum_{1\le j\le i\le\nu} Z_{\Omega_i,\Omega_j}^{\nn,s,s}(A)-\sum_{j=1}^\nu Z_{\Omega_\nu,\Omega_j}^{\nn,s,s}(A)\qquad\mbox{\footnotesize(by definition of $P_{\Omega_1,\ldots,\Omega_\nu}^{BGS,\nn,s,s}(A)$ and \eqref{ER=Z})}\\
&=\sum_{1\le j\le i\le\nu-1} Z_{\Omega_i,\Omega_j}^{\nn,s,s}(A),
\end{align*}
which is \eqref{polirat} for $k=\nu$.

\medskip

\noindent{\em Case $1\le k\le\nu-1$.} Suppose that  \eqref{polirat} holds for the index $k+1$, i.e.,
\begin{equation}\label{polirat+}
G_{k+1}=\sum_{1\le j\le i\le k}Z_{\Omega_i,\Omega_j}^{\nn,s,s}(A)-\sum_{k+1\le i<j\le\nu}Z_{\Omega_i,\Omega_j}^{\nn,s,s}(A).
\end{equation}
We show that \eqref{polirat} also holds for the index $k$. We have
\begin{align*}
G_k&=G_{k+1}F_k=G_{k+1}(I-\rie_{\Omega_k}^{\nn,s}(A)A)\qquad\mbox{\footnotesize(by definition of $F_k$)}\\
&=G_{k+1}-G_{k+1}E_{\Omega_k,\Omega_k}^{\nn,s,s}({R_{\Omega_k,\Omega_k}^{\nn,s,s}(A)}^{-1})A\qquad\mbox{\footnotesize(by definition of $\rie_{\Omega_k}^{\nn,s}(A)$)}\\
&=G_{k+1}-\Biggl[\sum_{1\le j\le i\le k}Z_{\Omega_i,\Omega_j}^{\nn,s,s}(A)-\sum_{k+1\le i<j\le\nu}Z_{\Omega_i,\Omega_j}^{\nn,s,s}(A)\Biggr]E_{\Omega_k,\Omega_k}^{\nn,s,s}({R_{\Omega_k,\Omega_k}^{\nn,s,s}(A)}^{-1})A\qquad\mbox{\footnotesize(by induction hypothesis \eqref{polirat+})}\\
&=G_{k+1}-\Biggl[\sum_{1\le j\le i\le k}E_{\Omega_i,\Omega_j}^{\nn,s,s}(R_{\Omega_i,\Omega_j}^{\nn,s,s}(A))-\sum_{k+1\le i<j\le\nu}E_{\Omega_i,\Omega_j}^{\nn,s,s}(R_{\Omega_i,\Omega_j}^{\nn,s,s}(A))\Biggr]E_{\Omega_k,\Omega_k}^{\nn,s,s}({R_{\Omega_k,\Omega_k}^{\nn,s,s}(A)}^{-1})A\qquad\mbox{\footnotesize(by \eqref{ER=Z})}\\
&=G_{k+1}-\Biggl[\sum_{1\le j\le i\le k}\underbrace{E_{\Omega_i,\Omega_j}^{\nn,s,s}(R_{\Omega_i,\Omega_j}^{\nn,s,s}(A))E_{\Omega_k,\Omega_k}^{\nn,s,s}({R_{\Omega_k,\Omega_k}^{\nn,s,s}(A)}^{-1})}_{\textup{this is zero if $j\ne k$ by Lemma~\ref{r-lemma-beta}}}\\
&\qquad\qquad\qquad-\sum_{k+1\le i<j\le\nu}\underbrace{E_{\Omega_i,\Omega_j}^{\nn,s,s}(R_{\Omega_i,\Omega_j}^{\nn,s,s}(A))E_{\Omega_k,\Omega_k}^{\nn,s,s}({R_{\Omega_k,\Omega_k}^{\nn,s,s}(A)}^{-1})}_{\textup{this is zero if $j\ne k$ by Lemma~\ref{r-lemma-beta}}}\Biggr]A\\
&=G_{k+1}-E_{\Omega_k,\Omega_k}^{\nn,s,s}(I)A\qquad\mbox{\footnotesize(by Lemma~\ref{r-lemma-beta})}\\
&=G_{k+1}-E_{\Omega_k,\Omega_k}^{\nn,s,s}(I)\sum_{i,j=1}^\nu Z_{\Omega_i,\Omega_j}^{\nn,s,s}(A)\qquad\mbox{\footnotesize(by Remark~\ref{A-BJ-BGS})}\\
&=G_{k+1}-E_{\Omega_k,\Omega_k}^{\nn,s,s}(I)\sum_{i,j=1}^\nu E_{\Omega_i,\Omega_j}^{\nn,s,s}(R_{\Omega_i,\Omega_j}^{\nn,s,s}(A))\qquad\mbox{\footnotesize(by \eqref{ER=Z})}\\
&=G_{k+1}-\sum_{i,j=1}^\nu\underbrace{E_{\Omega_k,\Omega_k}^{\nn,s,s}(I)E_{\Omega_i,\Omega_j}^{\nn,s,s}(R_{\Omega_i,\Omega_j}^{\nn,s,s}(A))}_{\textup{this is zero if $i\ne k$ by Lemma~\ref{r-lemma-beta}}}\\
&=G_{k+1}-\sum_{j=1}^\nu E_{\Omega_k,\Omega_j}^{\nn,s,s}(R_{\Omega_k,\Omega_j}^{\nn,s,s}(A))\qquad\mbox{\footnotesize(by Lemma~\ref{r-lemma-beta})}\\
&=\Biggl[\sum_{1\le j\le i\le k}Z_{\Omega_i,\Omega_j}^{\nn,s,s}(A)-\sum_{k+1\le i<j\le\nu}Z_{\Omega_i,\Omega_j}^{\nn,s,s}(A)\Biggr]-\sum_{j=1}^\nu Z_{\Omega_k,\Omega_j}^{\nn,s,s}(A)\qquad\mbox{\footnotesize(by induction hypothesis \eqref{polirat+} and \eqref{ER=Z})}\\
&=\Biggl[\sum_{1\le j\le i\le k}Z_{\Omega_i,\Omega_j}^{\nn,s,s}(A)-\sum_{k+1\le i<j\le\nu}Z_{\Omega_i,\Omega_j}^{\nn,s,s}(A)\Biggr]-\sum_{j=1}^k Z_{\Omega_k,\Omega_j}^{\nn,s,s}(A)-\sum_{j=k+1}^\nu Z_{\Omega_k,\Omega_j}^{\nn,s,s}(A)\\
&=\sum_{1\le j\le i\le k-1}Z_{\Omega_i,\Omega_j}^{\nn,s,s}(A)-\sum_{k\le i<j\le\nu}Z_{\Omega_i,\Omega_j}^{\nn,s,s}(A),
\end{align*}
which is precisely \eqref{polirat} for the index $k$.
\end{proof}

\section{Main results}\label{sec:main}

In this section, we state and prove the main results of this paper, namely, Theorem~\ref{blockdiag(GLT)=GLT_blocktril(GLT)=GLT} (which presents the GLT analysis of the BJ/BGS preconditioners) and Theorem~\ref{AS(GLT)=GLT_MS(GLT)=GLT} (which presents the GLT analysis of the AS/MS preconditioners).
The proof of Theorem~\ref{AS(GLT)=GLT_MS(GLT)=GLT} heavily rely on the GLT analysis of the RIE operator, which is presented in Theorem~\ref{rie-thm}.

\subsection{GLT analysis of the BJ/BGS preconditioners}\label{sec:BJ-BGS}

Theorem~\ref{blockdiag(GLT)=GLT_blocktril(GLT)=GLT} is the first main result of this paper. It can be seen as the multidimensional generalization of \cite[Theorems~4.1--4.2]{GLH} and \cite[Theorem~4.1]{DDM-GLT-1d}. Theorem~\ref{blockdiag(GLT)=GLT_blocktril(GLT)=GLT} provides the GLT analysis of the blockdiag and blocktril operators. In particular, it shows that any sequence of BJ/BGS preconditioners for a GLT sequence with symbol $\kappa$ is again a GLT sequence with symbol $\kappa$, as long as a suitable convergence assumption is satisfied by the subsets $\Omega_{1,n},\ldots,\Omega_{\nu,n}$ used in the construction of the preconditioners.

\begin{theorem}\label{blockdiag(GLT)=GLT_blocktril(GLT)=GLT}
Let $d,s,t,\nu$ be positive integers, let $\{\nn=\nn(n)\}_n$ be a sequence of positive $d$-indices tending to $\infty$, and, for every~$n$, let $\Omega_{1,n},\ldots,\Omega_{\nu,n}$ be subsets of $[0,1]^d$ such that $\Omega_{i,n}\xrightarrow{{\rm w.r.t.}\,\nn}\Omega_i$ for every $i=1,\ldots,\nu$, where $\Omega_1,\ldots,\Omega_\nu\subseteq[0,1]^d$ are regular.
Let $\kappa:[0,1]^d\times[-\pi,\pi]^d\to\mathbb C^{s\times t}$ be measurable and let $\{A_n\}_n$ be a sequence of matrices with $A_n$ of size $N(\nn)s\times N(\nn)t$ and $\{A_n\}_n\sim_{\rm GLT}\kappa$. Then,
\begin{align}
\biggl\{\mathop{\rm blockdiag}_{\Omega_{1,n},\ldots,\Omega_{\nu,n}}^{\nn,s,t}(A_n)\biggr\}_n&\sim_{\rm GLT}\kappa(\xx,\btheta)\sum_{i=1}^\nu\chi_{\Omega_i}(\xx),\label{clueq}\\
\biggl\{\mathop{\rm blocktril}_{\Omega_{1,n},\ldots,\Omega_{\nu,n}}^{\nn,s,t}(A_n)\biggr\}_n&\sim_{\rm GLT}\kappa(\xx,\btheta)\sum_{1\le j\le i\le\nu}\chi_{\Omega_i\cap\Omega_j}(\xx).\label{clueq'}
\end{align}
In particular, if $\sum_{i=1}^\nu\chi_{\Omega_i}=1$ a.e.\ on $[0,1]^d$, which happens, for instance, if $\{\Omega_1,\ldots,\Omega_\nu\}$ is a partition of $[0,1]^d$, then
\begin{align}
\biggl\{\mathop{\rm blockdiag}_{\Omega_{1,n},\ldots,\Omega_{\nu,n}}^{\nn,s,t}(A_n)\biggr\}_n&\sim_{\rm GLT}\kappa,\label{in-pirtico}\\
\biggl\{\mathop{\rm blocktril}_{\Omega_{1,n},\ldots,\Omega_{\nu,n}}^{\nn,s,t}(A_n)\biggr\}_n&\sim_{\rm GLT}\kappa.\label{in-pirtico'}
\end{align}
\end{theorem}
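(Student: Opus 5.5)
The plan is to write both operators as finite sums of products of diagonal sampling matrices with $A_n$, and then apply \textbf{GLT2}--\textbf{GLT3}. By Definitions~\ref{ZRE-operators} and~\ref{bJGSp}, for every $n$,
\[ \mathop{\rm blockdiag}_{\Omega_{1,n},\ldots,\Omega_{\nu,n}}^{\nn,s,t}(A_n)=\sum_{i=1}^\nu D_\nn(\chi_{\Omega_{i,n}}I_s)\,A_n\,D_\nn(\chi_{\Omega_{i,n}}I_t), \]
\[ \mathop{\rm blocktril}_{\Omega_{1,n},\ldots,\Omega_{\nu,n}}^{\nn,s,t}(A_n)=\sum_{1\le j\le i\le\nu}D_\nn(\chi_{\Omega_{i,n}}I_s)\,A_n\,D_\nn(\chi_{\Omega_{j,n}}I_t). \]
The only obstruction to applying GLT theory directly is that the sets $\Omega_{i,n}$ vary with $n$. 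So the first step is to replace $\Omega_{i,n}$ by its limit $\Omega_i$. We write $D_\nn(\chi_{\Omega_{i,n}}I_s)=D_\nn(\chi_{\Omega_i}I_s)+D_\nn((\chi_{\Omega_{i,n}}-\chi_{\Omega_i})I_s)$. By Remark~\ref{rankD-D}, the second term has rank at most $s\,N_\nn^{\Omega_{i,n}\triangle\Omega_i}=o(N(\nn))$. Since a sequence of matrices whose rank is $o$ of their size is zero-distributed, it is a GLT sequence with symbol $O_{s,s}$ by \textbf{GLT2}. The same holds with $t$ in place of $s$.

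Next, since $\Omega_i$ is regular, $\chi_{\Omega_i}$ is continuous a.e.\ on $[0,1]^d$. Hence, by \textbf{GLT2}, $\{D_\nn(\chi_{\Omega_i}I_s)\}_n\sim_{\rm GLT}\chi_{\Omega_i}(\xx)I_s$. Combining with the first step via the linearity part of \textbf{GLT3}, we get $\{D_\nn(\chi_{\Omega_{i,n}}I_s)\}_n\sim_{\rm GLT}\chi_{\Omega_i}(\xx)I_s$, and likewise with $t$. (Alternatively, one can observe directly that the product of a bounded-norm matrix with a low-rank one is low rank. Going through GLT3 avoids needing any norm bound on $A_n$.)

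We then apply the product rule of \textbf{GLT3} twice. This gives
\[ \{D_\nn(\chi_{\Omega_{i,n}}I_s)A_nD_\nn(\chi_{\Omega_{j,n}}I_t)\}_n\sim_{\rm GLT}\chi_{\Omega_i}(\xx)I_s\,\kappa(\xx,\btheta)\,\chi_{\Omega_j}(\xx)I_t=\chi_{\Omega_i\cap\Omega_j}(\xx)\kappa(\xx,\btheta). \]
Summing over $i=j$, respectively over $1\le j\le i\le\nu$, with the linearity rule of \textbf{GLT3}, yields \eqref{clueq} and \eqref{clueq'}, since $\chi_{\Omega_i\cap\Omega_i}=\chi_{\Omega_i}$.

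For the particular case, assume $\sum_i\chi_{\Omega_i}=1$ a.e. Then \eqref{in-pirtico} is immediate. For \eqref{in-pirtico'}, the extra step is to show that $\chi_{\Omega_i\cap\Omega_j}=0$ a.e.\ for $i\ne j$. This holds because the characteristic functions are nonnegative and sum to $1$, so a.e.\ point lies in exactly one $\Omega_i$. Therefore
\[ \sum_{1\le j\le i\le\nu}\chi_{\Omega_i\cap\Omega_j}=\sum_i\chi_{\Omega_i}=1 \quad\mbox{a.e.}, \]
and uniqueness of the symbol up to a.e.\ equality concludes.

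The only genuinely delicate point is the first step, turning the convergence $\Omega_{i,n}\xrightarrow{{\rm w.r.t.}\,\nn}\Omega_i$ into zero-distribution of the differences of the sampling matrices. Remark~\ref{rankD-D} handles it directly. The block factor $s$ (or $t$) multiplies the rank but does not affect the $o(N(\nn))$ conclusion, since the matrix size is $N(\nn)s$.
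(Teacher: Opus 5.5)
Your proof is correct and is essentially the paper's argument. Both rest on Remark~\ref{rankD-D} together with \textbf{GLT2}--\textbf{GLT3}. The only difference is where the $o(N(\nn))$-rank correction is absorbed: you absorb it into each sampling matrix $D_\nn(\chi_{\Omega_{i,n}}I_s)$ and let the product rule carry it through, whereas the paper compares the whole operator with its fixed-set counterpart $\sum_i D_\nn(\chi_{\Omega_i}I_s)A_nD_\nn(\chi_{\Omega_i}I_t)$ and bounds the rank of the difference directly via a telescoping identity.

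One small correction to your parenthetical: that direct route needs no norm bound on $A_n$ at all. For any matrices, $\mathrm{rank}(XY)\le\min(\mathrm{rank}\,X,\mathrm{rank}\,Y)$, and this is exactly what the paper uses.
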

\begin{proof}
If $\sum_{i=1}^\nu\chi_{\Omega_i}=1$ a.e.\ on $[0,1]^d$, then \eqref{clueq} implies \eqref{in-pirtico}.
Similarly, if $\sum_{i=1}^\nu\chi_{\Omega_i}=1$ a.e.\ on $[0,1]^d$, then $\Omega_i\cap\Omega_j$ has zero measure for every $i,j=1,\ldots,\nu$ with $i\ne j$, hence $\sum_{1\le j\le i\le\nu}\chi_{\Omega_i\cap\Omega_j}=\sum_{i=1}^\nu\chi_{\Omega_i}=1$ a.e.\ on $[0,1]^d$ and \eqref{clueq'} implies \eqref{in-pirtico'}. Thus, we only have to prove \eqref{clueq}--\eqref{clueq'}.

\medskip

\noindent{\em Proof of \eqref{clueq}.}
Let us define, for every $n$,
\begin{align*}
P_n^{BJ}&=\mathop{\rm blockdiag}_{\Omega_{1,n},\ldots,\Omega_{\nu,n}}^{\nn,s,t}(A_n)=\sum_{i=1}^\nu Z_{\Omega_{i,n},\Omega_{i,n}}^{\nn,s,t}(A_n)=\sum_{i=1}^\nu D_\nn(\chi_{\Omega_{i,n}}I_s)A_nD_\nn(\chi_{\Omega_{i,n}}I_t),\\
\tilde P_n^{BJ}&=\sum_{i=1}^\nu D_\nn(\chi_{\Omega_i}I_s)A_nD_\nn(\chi_{\Omega_i}I_t).
\end{align*}
Note that $\tilde P_n^{BJ}$ is obtained from $P_n^{BJ}$ by replacing the sets $\Omega_{1,n},\ldots,\Omega_{\nu,n}$ with their limits $\Omega_1,\ldots,\Omega_\nu$. Since $\Omega_1,\ldots,\Omega_\nu$ are regular by assumption, their characteristic functions $\chi_{\Omega_1},\ldots,\chi_{\Omega_\nu}$ are continuous a.e.\ on $[0,1]^d$ and {\bf GLT2}--{\bf GLT3} imply that
\[ \{\tilde P_n^{BJ}\}_n\sim_{\rm GLT}\sum_{i=1}^\nu\chi_{\Omega_i}(\xx)I_s\,\kappa(\xx,\btheta)\,\chi_{\Omega_i}(\xx)I_t=\sum_{i=1}^\nu\chi_{\Omega_i}(\xx)\kappa(\xx,\btheta)=\kappa^{BJ}(\xx,\btheta). \]
We show that
\begin{equation}\label{P-P=0}
\{P_n^{BJ}-\tilde P_n^{BJ}\}_n\sim_\sigma0.
\end{equation}
Once this is done, the thesis \eqref{clueq} follows from the GLT relation $\{\tilde P_n^{BJ}\}_n\sim_{\rm GLT}\kappa^{BJ}$ and {\bf GLT2}--{\bf GLT3}. To prove \eqref{P-P=0}, let us write
\begin{align*}
&P_n^{BJ}-\tilde P_n^{BJ}=\sum_{i=1}^\nu\Bigl[D_\nn(\chi_{\Omega_{i,n}}I_s)A_nD_\nn(\chi_{\Omega_{i,n}}I_t)-D_\nn(\chi_{\Omega_i}I_s)A_nD_\nn(\chi_{\Omega_i}I_t)\Bigr]\\
&=\sum_{i=1}^\nu\Bigl[(D_\nn(\chi_{\Omega_{i,n}}I_s)-D_\nn(\chi_{\Omega_i}I_s))A_nD_\nn(\chi_{\Omega_{i,n}}I_t)+D_\nn(\chi_{\Omega_i}I_s)A_n(D_\nn(\chi_{\Omega_{i,n}}I_t)-D_\nn(\chi_{\Omega_i}I_t))\Bigr]\\
&=\sum_{i=1}^\nu\Bigl[D_\nn((\chi_{\Omega_{i,n}}-\chi_{\Omega_i})I_s)A_nD_\nn(\chi_{\Omega_{i,n}}I_t)+D_\nn(\chi_{\Omega_i}I_s)A_nD_\nn((\chi_{\Omega_{i,n}}-\chi_{\Omega_i})I_t)\Bigr]\\
&=\sum_{i=1}^\nu\Bigl[(D_\nn(\chi_{\Omega_{i,n}}-\chi_{\Omega_i})\otimes I_s)A_n(D_\nn(\chi_{\Omega_{i,n}})\otimes I_t)+(D_\nn(\chi_{\Omega_i})\otimes I_s)A_n(D_\nn(\chi_{\Omega_{i,n}}-\chi_{\Omega_i})\otimes I_t)\Bigr].
\end{align*}
Since $\Omega_{i,n}\xrightarrow{{\rm w.r.t.}\,\nn}\Omega_i$, Remark~\ref{rankD-D} implies that
\begin{equation}\label{rank-diag}
{\rm rank}(D_\nn(\chi_{\Omega_{i,n}}-\chi_{\Omega_i}))=o(N(\nn)),\qquad i=1,\ldots,\nu.
\end{equation}
Thus,
\begin{align*}
{\rm rank}(P_n^{BJ}-\tilde P_n^{BJ})&\le\sum_{i=1}^\nu\Bigl[s\mathop{\rm rank}(D_\nn(\chi_{\Omega_{i,n}}-\chi_{\Omega_i}))+t\mathop{\rm rank}(D_\nn(\chi_{\Omega_{i,n}}-\chi_{\Omega_i}))\Bigr]\\
&=(s+t)\sum_{i=1}^\nu{\rm rank}(D_\nn(\chi_{\Omega_{i,n}}-\chi_{\Omega_i}))=o(N(\nn)).
\end{align*}
This shows that \eqref{P-P=0} holds, and the proof of \eqref{clueq} is complete.

\medskip

\noindent{\em Proof of \eqref{clueq'}.}
The proof of \eqref{clueq'} follows the same pattern as the proof of \eqref{clueq}. Let us define, for every $n$,
\begin{align*}
P_n^{BGS}&=\mathop{\rm blocktril}_{\Omega_{1,n},\ldots,\Omega_{\nu,n}}^{\nn,s,t}(A_n)=\sum_{1\le j\le i\le\nu}Z_{\Omega_{i,n},\Omega_{j,n}}^{\nn,s,t}(A_n)=\sum_{1\le j\le i\le\nu}D_\nn(\chi_{\Omega_{i,n}}I_s)A_nD_\nn(\chi_{\Omega_{j,n}}I_t),\\
\tilde P_n^{BGS}&=\sum_{1\le j\le i\le\nu}D_\nn(\chi_{\Omega_i}I_s)A_nD_\nn(\chi_{\Omega_j}I_t).
\end{align*}
Note that $\tilde P_n^{BGS}$ is obtained from $P_n^{BGS}$ by replacing the sets $\Omega_{1,n},\ldots,\Omega_{\nu,n}$ with their limits $\Omega_1,\ldots,\Omega_\nu$. Since $\Omega_1,\ldots,\Omega_\nu$ are regular by assumption, their characteristic functions $\chi_{\Omega_1},\ldots,\chi_{\Omega_\nu}$ are continuous a.e.\ on $[0,1]^d$ and {\bf GLT2}--{\bf GLT3} imply that
\[ \{\tilde P_n^{BGS}\}_n\sim_{\rm GLT}\sum_{1\le j\le i\le\nu}\chi_{\Omega_i}(\xx)I_s\,\kappa(\xx,\btheta)\,\chi_{\Omega_j}(\xx)I_t=\sum_{1\le j\le i\le\nu}\chi_{\Omega_i\cap\Omega_j}(\xx)\kappa(\xx,\btheta)=\kappa^{BGS}(\xx,\btheta). \]
We show that
\begin{equation}\label{P-P=0'}
\{P_n^{BGS}-\tilde P_n^{BGS}\}_n\sim_\sigma0.
\end{equation}
Once this is done, the thesis \eqref{clueq'} follows from the GLT relation $\{\tilde P_n^{BGS}\}_n\sim_{\rm GLT}\kappa^{BGS}$ and {\bf GLT2}--{\bf GLT3}. To prove \eqref{P-P=0'}, let us write
\begin{align*}
&P_n^{BGS}-\tilde P_n^{BGS}=\sum_{1\le j\le i\le\nu}\Bigl[D_\nn(\chi_{\Omega_{i,n}}I_s)A_nD_\nn(\chi_{\Omega_{j,n}}I_t)-D_\nn(\chi_{\Omega_i}I_s)A_nD_\nn(\chi_{\Omega_j}I_t)\Bigr]\\
&=\sum_{1\le j\le i\le\nu}\Bigl[(D_\nn(\chi_{\Omega_{i,n}}I_s)-D_\nn(\chi_{\Omega_i}I_s))A_nD_\nn(\chi_{\Omega_{j,n}}I_t)+D_\nn(\chi_{\Omega_i}I_s)A_n(D_\nn(\chi_{\Omega_{j,n}}I_t)-D_\nn(\chi_{\Omega_j}I_t))\Bigr]\\
&=\sum_{1\le j\le i\le\nu}\Bigl[D_\nn((\chi_{\Omega_{i,n}}-\chi_{\Omega_i})I_s)A_nD_\nn(\chi_{\Omega_{j,n}}I_t)+D_\nn(\chi_{\Omega_i}I_s)A_nD_\nn((\chi_{\Omega_{j,n}}-\chi_{\Omega_j})I_t)\Bigr]\\
&=\sum_{1\le j\le i\le\nu}\Bigl[(D_\nn(\chi_{\Omega_{i,n}}-\chi_{\Omega_i})\otimes I_s)A_n(D_\nn(\chi_{\Omega_{j,n}})\otimes I_t)+(D_\nn(\chi_{\Omega_i})\otimes I_s)A_n(D_\nn(\chi_{\Omega_{j,n}}-\chi_{\Omega_j})\otimes I_t)\Bigr].
\end{align*}
Recalling \eqref{rank-diag}, we have
\begin{align*}
{\rm rank}(P_n^{BGS}-\tilde P_n^{BGS})&\le\sum_{1\le j\le i\le\nu}\Bigl[s\mathop{\rm rank}(D_\nn(\chi_{\Omega_{i,n}}-\chi_{\Omega_i}))+t\mathop{\rm rank}(D_\nn(\chi_{\Omega_{j,n}}-\chi_{\Omega_j}))\Bigr]\\
&\le(s\vee t)\sum_{1\le j\le i\le\nu}\Bigl[{\rm rank}(D_\nn(\chi_{\Omega_{i,n}}-\chi_{\Omega_i}))+{\rm rank}(D_\nn(\chi_{\Omega_{j,n}}-\chi_{\Omega_j}))\Bigr]=o(N(\nn)).
\end{align*}
This shows that \eqref{P-P=0'} holds, and the proof of \eqref{clueq'} is complete.
\end{proof}

We present two corollaries of Theorem~\ref{blockdiag(GLT)=GLT_blocktril(GLT)=GLT}.
In what follows, a regular partition of $[0,1]^d$ is a partition of $[0,1]^d$ consisting of regular sets.

\begin{corollary}\label{block-cor}
Let $d,s,t,\nu$ be positive integers, let $\{\nn=\nn(n)\}_n$ be a sequence of positive $d$-indices tending to $\infty$, and, for every~$n$, let $\Omega_{1,n},\ldots,\Omega_{\nu,n}$ be subsets of $[0,1]^d$ with the following property.
\begin{quote}
For every subsequence of indices $n\in\II$ there exist a subsequence of indices $n\in\JJ\subseteq\II$ and a regular partition $\{\Omega_1,\ldots,\Omega_\nu\}$ of $[0,1]^d$, which may depend on $\JJ$, such that $\Omega_{i,n}\xrightarrow{{\rm w.r.t.}\,\nn}\Omega_i$ as $n\to\infty$ in $\JJ$, for every $i=1,\ldots,\nu$.
\end{quote}
Let $\kappa:[0,1]^d\times[-\pi,\pi]^d\to\mathbb C^{s\times t}$ be measurable and let $\{A_n\}_n$ be a sequence of matrices with $A_n$ of size $N(\nn)s\times N(\nn)t$ and $\{A_n\}_n\sim_{\rm GLT}\kappa$. Then,
\begin{align}
\biggl\{\mathop{\rm blockdiag}_{\Omega_{1,n},\ldots,\Omega_{\nu,n}}^{\nn,s,t}(A_n)\biggr\}_n\sim_{\rm GLT}\kappa,\label{clueq-cor}\\
\biggl\{\mathop{\rm blocktril}_{\Omega_{1,n},\ldots,\Omega_{\nu,n}}^{\nn,s,t}(A_n)\biggr\}_n\sim_{\rm GLT}\kappa.\label{clueq'-cor}
\end{align}
\end{corollary}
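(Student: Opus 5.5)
The plan is a subsequence argument on top of Theorem~\ref{blockdiag(GLT)=GLT_blocktril(GLT)=GLT}. Write $P_n$ for either $\mathop{\rm blockdiag}^{\nn,s,t}_{\Omega_{1,n},\ldots,\Omega_{\nu,n}}(A_n)$ or $\mathop{\rm blocktril}^{\nn,s,t}_{\Omega_{1,n},\ldots,\Omega_{\nu,n}}(A_n)$. On any subsequence, Theorem~\ref{blockdiag(GLT)=GLT_blocktril(GLT)=GLT} applies with a regular partition $\{\Omega_1,\ldots,\Omega_\nu\}$, so by \eqref{in-pirtico}--\eqref{in-pirtico'} the symbol there is exactly $\kappa$, independently of which partition arises. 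It remains to pass from "every subsequence has a sub-subsequence that is GLT with symbol $\kappa$" to the whole sequence.

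To avoid needing a subsequence characterization of GLT sequences (not listed among {\bf GLT1}--{\bf GLT3}), I would reduce to a zero-distribution statement. Set $\tilde P_n=\sum_{i=1}^\nu D_\nn(I_s)A_nD_\nn(I_t)\cdot\delta$-type comparison matrix; more concretely, for BJ use $E_n=P_n-A_n$ and aim to show $\{E_n\}_n\sim_\sigma0$. The same reduction works for BGS. If this holds, then $\{P_n\}_n=\{A_n+E_n\}_n\sim_{\rm GLT}\kappa$ follows from {\bf GLT2}--{\bf GLT3}.

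The claim $\{E_n\}_n\sim_\sigma 0$ is a singular value distribution statement. It means that for each $F\in C_c(\mathbb R)$ the scalar sequence $\frac1{N(\nn)(s\wedge t)}\sum_iF(\sigma_i(E_n))$ tends to $F(0)$. A real sequence converges to a limit if and only if every subsequence has a further subsequence converging to that limit, so it suffices to verify the claim along the sub-subsequences $\JJ$. Along $\JJ$ the proof of Theorem~\ref{blockdiag(GLT)=GLT_blocktril(GLT)=GLT}, which is purely rank-based and therefore also valid for $n\in\JJ$, gives two facts:
\begin{itemize}[nolistsep,leftmargin=*]
	\item $P_n-\tilde P_n$ has rank $o(N(\nn))$, where $\tilde P_n$ is built with the limit sets $\Omega_i$;
	\item $\{\tilde P_n\}_{n\in\JJ}$ is GLT with symbol $\kappa$.
\end{itemize}
Hence $\{\tilde P_n-A_n\}_{n\in\JJ}\sim_{\rm GLT}0$, so it is zero-distributed by {\bf GLT2}. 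Adding the low-rank term, $\{E_n\}_{n\in\JJ}\sim_\sigma0$.

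The main obstacle is that {\bf GLT1}--{\bf GLT3} are stated for sequences indexed by $n$ in an infinite subset of $\mathbb N$, and I must apply them to the restricted sequence over $\JJ$ together with $\Omega_{i,n}\to\Omega_i$ in $\JJ$. I expect this to be harmless: $\{A_n\}_{n\in\JJ}$ is still GLT with symbol $\kappa$, since restricting the a.c.s.\ in Definition~\ref{GLT_def} is immediate, and the sequence framework of the paper already allows arbitrary infinite index sets. The same argument applies verbatim to the blocktril case.
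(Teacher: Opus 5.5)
Your proposal is correct and is essentially the paper's proof. Both reduce, via \textbf{GLT2}--\textbf{GLT3}, to showing $\{P_n-A_n\}_n\sim_\sigma0$, and both get this from the subsequence hypothesis by applying Theorem~\ref{blockdiag(GLT)=GLT_blocktril(GLT)=GLT} along $\JJ$; the paper argues by contradiction with a fixed $F$, while you use the equivalent sub-subsequence criterion for scalar sequences. The only blemish is the garbled sentence defining $\tilde P_n$ as a ``$\delta$-type comparison matrix'', which should be deleted; and instead of re-entering the theorem's proof along $\JJ$, you can apply its conclusion \eqref{in-pirtico}--\eqref{in-pirtico'} directly to $\{A_n\}_{n\in\JJ}$.
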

\begin{proof}
We first prove \eqref{clueq-cor}. Let us define, for every $n$,
\begin{align}
P_n&=\mathop{\rm blockdiag}_{\Omega_{1,n},\ldots,\Omega_{\nu,n}}^{\nn,s,t}(A_n),\label{Pndef}\\[5pt]
Z_n&=P_n-A_n.\notag
\end{align}
By {\bf GLT2}--{\bf GLT3}, the thesis $\{P_n\}_n\sim_{\rm GLT}\kappa$ is equivalent to $\{Z_n\}_n\sim_\sigma0$. Suppose by contradiction that the thesis does not hold. Then, $\{Z_n\}_n$ is not a zero-distributed sequence. This means that there exist $F\in C_c(\mathbb R)$ and a subsequence of indices $n\in\II$ such that
\begin{equation}\label{lim-iuper}
\lim_{\substack{\vphantom{\int}n\to\infty\\n\in\II}}\frac1{N(\nn)s\wedge N(\nn)t}\sum_{i=1}^{N(\nn)s\wedge N(\nn)t}F(\sigma_i(Z_n))=L\ne0.
\end{equation}
By hypothesis, we can extract from the subsequence of indices $n\in\II$ another subsequence of indices $n\in\JJ\subseteq\II$ such that $\Omega_{i,n}\xrightarrow{{\rm w.r.t.}\,\nn}\Omega_i$ as $n\to\infty$ in $\JJ$, for every $i=1,\ldots,\nu$, where $\{\Omega_1,\ldots,\Omega_\nu\}$ is some regular partition of $[0,1]^d$.
By Theorem~\ref{blockdiag(GLT)=GLT_blocktril(GLT)=GLT}, we have $\{P_n\}_{n\in\JJ}\sim_{\rm GLT}\kappa$, which is equivalent to $\{Z_n\}_{n\in\JJ}\sim_\sigma0$ by {\bf GLT2}--{\bf GLT3} and the obvious GLT relation $\{A_n\}_{n\in\JJ}\sim_{\rm GLT}\kappa$.
In particular, we have
\[ \lim_{\substack{\vphantom{\int}n\to\infty\\n\in\JJ}}\frac1{N(\nn)s\wedge N(\nn)t}\sum_{i=1}^{N(\nn)s\wedge N(\nn)t}F(\sigma_i(Z_n))=0, \]
which is a contradiction to \eqref{lim-iuper}. This completes the proof of \eqref{clueq-cor}.

The proof of \eqref{clueq'-cor} is verbatim the same as the proof of \eqref{clueq-cor}, with the only difference that ``blockdiag'' must be replaced with ``blocktril'' in \eqref{Pndef}.
\end{proof}

\begin{corollary}\label{block-cor-cor}
Let $d,s,t,\nu,k$ be positive integers with $1\le k\le d$, let $\{\nn=\nn(n)\}_n$ be a sequence of positive $d$-indices tending to $\infty$, and, for every~$n$, let $(n_{k,1},\ldots,n_{k,\nu})=(n_{k,1}(n),\ldots,n_{k,\nu}(n))$ be a partition of $n_k$ (the $k$th component of $\nn$) and let $\{\Omega_{1,n},\ldots,\Omega_{\nu,n}\}$ be the partition of $[0,1]^d$ defined by
\begin{align}
\Omega_{1,n}&=[0,1]^{k-1}\times\Bigl[0,\frac{n_{k,1}}{n_k}\Bigr]\times[0,1]^{d-k},\label{o1}\\
\Omega_{2,n}&=[0,1]^{k-1}\times\Bigl(\frac{n_{k,1}}{n_k},\frac{n_{k,1}+n_{k,2}}{n_k}\Bigr]\times[0,1]^{d-k},\label{o2}\\
\Omega_{3,n}&=[0,1]^{k-1}\times\Bigl(\frac{n_{k,1}+n_{k,2}}{n_k},\frac{n_{k,1}+n_{k,2}+n_{k,3}}{n_k}\Bigr]\times[0,1]^{d-k},\label{o3}\\
\vdots & \notag\\
\Omega_{\nu,n}&=[0,1]^{k-1}\times\Bigl(\frac{n_{k,1}+\ldots+n_{k,\nu-1}}{n_k},1\Bigr]\times[0,1]^{d-k}.\label{onu}
\end{align}
Let $\kappa:[0,1]^d\times[-\pi,\pi]^d\to\mathbb C^{s\times t}$ be measurable and let $\{A_n\}_n$ be a sequence of matrices with $A_n$ of size $N(\nn)s\times N(\nn)t$ and $\{A_n\}_n\sim_{\rm GLT}\kappa$. Then,
\begin{align*}
\Bigl\{P_{\Omega_{1,n},\ldots,\Omega_{\nu,n}}^{BJ,\nn,s,t}(A_n)\Bigr\}_n&=\biggl\{\mathop{\rm blockdiag}_{\Omega_{1,n},\ldots,\Omega_{\nu,n}}^{\nn,s,t}(A_n)\biggr\}_n\sim_{\rm GLT}\kappa,\\
\Bigl\{P_{\Omega_{1,n},\ldots,\Omega_{\nu,n}}^{BGS,\nn,s,t}(A_n)\Bigr\}_n&=\biggl\{\mathop{\rm blocktril}_{\Omega_{1,n},\ldots,\Omega_{\nu,n}}^{\nn,s,t}(A_n)\biggr\}_n\sim_{\rm GLT}\kappa.
\end{align*}
\end{corollary}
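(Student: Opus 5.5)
The plan is to deduce Corollary~\ref{block-cor-cor} from Corollary~\ref{block-cor}. The first equalities in the two displayed relations hold by Definition~\ref{bJGSp}, since $\{\Omega_{1,n},\ldots,\Omega_{\nu,n}\}$ is a partition of $[0,1]^d$ for every $n$. So we only need to verify the subsequence hypothesis of Corollary~\ref{block-cor}.

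To do this, fix a subsequence of indices $n\in\II$. For $i=0,\ldots,\nu$, consider the cumulative ratios
\[ r_{i,n}=\frac{n_{k,1}+\ldots+n_{k,i}}{n_k}\in[0,1], \]
so that $r_{0,n}=0$ and $r_{\nu,n}=1$. By compactness of $[0,1]^{\nu+1}$, we extract a subsequence $\JJ\subseteq\II$ along which $r_{i,n}\to r_i$ for every $i$. The limits satisfy $0=r_0\le r_1\le\cdots\le r_\nu=1$. We then define the limit sets as slabs:
\[ \Omega_1=[0,1]^{k-1}\times[0,r_1]\times[0,1]^{d-k},\qquad \Omega_i=[0,1]^{k-1}\times(r_{i-1},r_i]\times[0,1]^{d-k},\quad i=2,\ldots,\nu, \]
where $(a,a]$ is read as the empty set. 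These sets form a partition of $[0,1]^d$, and each is regular, because it is bounded and its boundary is contained in finitely many hyperplanes.

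The remaining step is to check that $\Omega_{i,n}\xrightarrow{{\rm w.r.t.}\,\nn}\Omega_i$ as $n\to\infty$ in $\JJ$. The symmetric difference $\Omega_{i,n}\triangle\Omega_i$ is contained in the union of two slabs
\[ [0,1]^{k-1}\times I\times[0,1]^{d-k}, \]
where $I$ is a closed interval of length at most $\epsilon_n=\max_j|r_{j,n}-r_j|$ around $r_{i-1}$ or around $r_i$. The number of grid points $\jj/\nn$ with $j_k/n_k$ in such an interval is at most $(n_k\epsilon_n+1)\,N(\nn)/n_k$. Hence
\[ \frac{N_\nn^{\Omega_{i,n}\triangle\Omega_i}}{N(\nn)}\le 2\Bigl(\epsilon_n+\frac1{n_k}\Bigr)\to0, \]
because $\epsilon_n\to0$ in $\JJ$ and $n_k\to\infty$ (as $\nn\to\infty$).

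Corollary~\ref{block-cor} then gives both GLT relations. The only delicate point is the bookkeeping at the endpoints, namely degenerate intervals when $r_{i-1}=r_i$ and the half-open versus closed ends. This is absorbed by the $+1$ in the count of grid points per slab, so I expect no real obstacle.
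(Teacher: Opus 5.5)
Your proof is correct and follows the same route as the paper: you reduce to Corollary~\ref{block-cor} by extracting a subsequence along which the ratios $n_{k,i}/n_k$ (equivalently, your cumulative ratios $r_{i,n}$) converge, and you take the limiting slabs as the regular partition. Your explicit count $N_\nn^{\Omega_{i,n}\triangle\Omega_i}/N(\nn)\le2(\epsilon_n+1/n_k)$ supplies the convergence step that the paper only describes as ``not difficult to see''.
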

\begin{proof}
It is enough to show that the sets $\Omega_{1,n},\ldots,\Omega_{\nu,n}$ in \eqref{o1}--\eqref{onu} satisfy the property quoted in the statement of Corollary~\ref{block-cor}.
Once this is done, the thesis follows from Corollary~\ref{block-cor}.

Consider a subsequence of indices $n\in\II$.
We extract from $\II$ a subsequence of indices $n\in\II_1\subseteq\II$ such that there exists
\[ \lim_{\substack{\vphantom{\int}n\to\infty\\n\in\II_1}}\frac{n_{k,1}}{n_k}=c_1; \]
then, we extract from $\II_1$ a subsequence of indices $n\in\II_2\subseteq\II_1$ such that there exists
\[ \lim_{\substack{\vphantom{\int}n\to\infty\\n\in\II_2}}\frac{n_{k,2}}{n_k}=c_2; \]
and so on. This extraction procedure ends up with a subsequence of indices $n\in\II_\nu\subseteq\ldots\subseteq\II_1\subseteq\II$ such that there exists
\[ \lim_{\substack{\vphantom{\int}n\to\infty\\n\in\II_\nu}}\frac{n_{k,i}}{n_k}=c_i,\qquad i=1,\ldots,\nu. \]
Set $\JJ=\II_\nu$. It is not difficult to see that $\Omega_{i,n}\xrightarrow{{\rm w.r.t.}\,\nn}\Omega_i$ as $n\to\infty$ in $\JJ$, for every $i=1,\ldots,\nu$, where $\{\Omega_1,\ldots,\Omega_\nu\}$ is the regular partition of $[0,1]^d$ defined by\,\footnote{\,Note that $\{\Omega_1,\ldots,\Omega_\nu\}$ is indeed a partition of $[0,1]^d$, because
$\displaystyle c_1+c_2+\ldots+c_\nu=\lim_{\substack{\vphantom{\int}n\to\infty\\n\in J}}\underbrace{\frac{n_{k,1}+n_{k,2}+\ldots+n_{k,\nu}}{n_k}}_{\textup{this equals $1$ for every $n$}}=1$.}
\begin{align*}
\Omega_1&=[0,1]^{k-1}\times[0,c_1]\times[0,1]^{d-k},\\
\Omega_2&=[0,1]^{k-1}\times(c_1,c_1+c_2]\times[0,1]^{d-k},\\
\Omega_3&=[0,1]^{k-1}\times(c_1+c_2,c_1+c_2+c_3]\times[0,1]^{d-k},\\
\vdots & \\
\Omega_\nu&=[0,1]^{k-1}\times(c_1+\ldots+c_{\nu-1},1]\times[0,1]^{d-k}.
\end{align*}
This shows that the sets $\Omega_{1,n},\ldots,\Omega_{\nu,n}$ in \eqref{o1}--\eqref{onu} satisfy the property quoted in the statement of Corollary~\ref{block-cor}, and the proof is complete.
\end{proof}

\begin{remark}
In view of Example~\ref{recupero}, Theorems~4.1--4.2 of \cite{GLH} are obtained from Corollary~\ref{block-cor-cor} by setting $d=1$ and $\nn=d_n$, where $\{d_n\}_n$ is the positive integer sequence tending to $\infty$ that appears in the statements of \cite[Theorems~4.1--4.2]{GLH}.
\end{remark}

\subsection{GLT analysis of the AS/MS preconditioners}\label{sec:AS-MS}

Theorem~\ref{AS(GLT)=GLT_MS(GLT)=GLT} is the second main result of this paper. It can be seen as the multidimensional generalization of \cite[Theorems~4.2--4.3]{DDM-GLT-1d}.
Theorem~\ref{AS(GLT)=GLT_MS(GLT)=GLT} provides the GLT analysis of the AS/MS preconditioners. In particular, it shows the following.
\begin{itemize}[nolistsep,leftmargin=*]
	\item Any sequence of MS preconditioners for a GLT sequence with symbol $\kappa$ is again a GLT sequence with symbol $\kappa$, as long as a suitable convergence assumption is satisfied by the sets $\Omega_{1,n},\ldots,\Omega_{\nu,n}$ used in the construction of the preconditioners.
	\item Any sequence of AS preconditioners for a GLT sequence with symbol $\kappa$ is again a GLT sequence with symbol $\kappa$, as long as a suitable convergence assumption is satisfied by the sets $\Omega_{1,n},\ldots,\Omega_{\nu,n}$ used in the construction of the preconditioners and the limit sets $\Omega_1,\ldots,\Omega_\nu$ form a partition of $[0,1]^d$; if the latter condition is violated, then the symbol of the considered sequence of AS preconditioners is not $\kappa$ but $\kappa(\xx,\btheta)/\sum_{i=1}^\nu\chi_{\Omega_i}(\xx)$.
\end{itemize}
The proof of Theorem~\ref{AS(GLT)=GLT_MS(GLT)=GLT} requires the GLT analysis of the RIE operator, which is provided in Theorem~\ref{rie-thm}.
Due to its relevance, Theorem~\ref{rie-thm} can be considered as a further main result of this paper in addition to Theorems~\ref{blockdiag(GLT)=GLT_blocktril(GLT)=GLT} and~\ref{AS(GLT)=GLT_MS(GLT)=GLT}.

To prove Theorem~\ref{rie-thm}, we need the following Lemma~\ref{opma} about the RIE operator.
In what follows, if $\Omega\subseteq[0,1]^d$, we denote by $\Omega^c=[0,1]^d\setminus\Omega$ the complement of $\Omega$ in $[0,1]^d$.
Moreover, we denote by $GL_m(\mathbb C)$ the general linear group of $m\times m$ invertible matrices:
\[ GL_m(\mathbb C)=\{A\in\mathbb C^{m\times m}:\,A\textup{ is invertible}\}. \]
We recall that, for every positive integer $m$, $GL_m(\mathbb C)$ is a dense open subset of $\mathbb C^{m\times m}$ and the inversion operator
\[ (\cdot)^{-1}:GL_m(\mathbb C)\to GL_m(\mathbb C),\qquad A\mapsto A^{-1}, \]
is a continuous bijection whose inverse is itself.\,\footnote{\,Here and in what follows, it is understood that, for every positive integer $m$, the topology of $\mathbb C^{m\times m}$ is the standard topology induced by any norm of $\mathbb C^{m\times m}$. All terminology from topology (such as ``dense'', ``open'', ``continuous'', etc.)\ always refers to such topology.}

\begin{lemma}\label{opma}
Let $d,s$ be positive integers, let $\nn\in\mathbb N^d$, and let $\Omega\subseteq[0,1]^d$.
\begin{enumerate}[nolistsep,leftmargin=*]
	\item The domain of the RIE operator $D_\Omega^{\rie,\nn,s}$ is an open subset of $\mathbb C^{N(\nn)s\times N(\nn)s}$.
	\item The RIE operator
	\[ \rie_\Omega^{\nn,s}:D_\Omega^{\rie,\nn,s}\subseteq\mathbb C^{N(\nn)s\times N(\nn)s}\to\mathbb C^{N(\nn)s\times N(\nn)s} \]
	is a continuous function over its domain $D_\Omega^{\rie,\nn,s}$.
	\item For every $A\in D_\Omega^{\rie,\nn,s}$ and every $\epsilon>0$, there exists $\tilde A\in D_\Omega^{\rie,\nn,s}\cap D_{\Omega^c}^{\rie,\nn,s}$ such that
	\[ \|\tilde A-A\|<\epsilon,\qquad\|\rie_\Omega^{\nn,s}(\tilde A)-\rie_\Omega^{\nn,s}(A)\|<\epsilon. \]
\end{enumerate}
\end{lemma}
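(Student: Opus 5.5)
The three items are elementary facts about the maps $A\mapsto R_{\Omega,\Omega}^{\nn,s,s}(A)$ and $B\mapsto E_{\Omega,\Omega}^{\nn,s,s}(B)$, combined with the topology of $GL_m(\mathbb C)$. Set $m=N_\nn^\Omega s$. If $m=0$, i.e.\ $\II_\nn^\Omega=\emptyset$, we adopt the convention that the empty matrix is invertible. Then $D_\Omega^{\rie,\nn,s}$ is the whole space and $\rie_\Omega^{\nn,s}\equiv O$, so every claim is trivial except item~3, which is handled below. Assume $m\ge1$ from now on.

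\emph{Items 1 and 2.} The restriction map $A\mapsto R_{\Omega,\Omega}^{\nn,s,s}(A)=R_\Omega^{\nn,s}A(R_\Omega^{\nn,s})^T$ is linear, hence continuous. Its preimage of the open set $GL_m(\mathbb C)$ is exactly $D_\Omega^{\rie,\nn,s}$, which proves item~1. On this domain, $\rie_\Omega^{\nn,s}$ is the composition of three maps: the continuous restriction map, the continuous inversion on $GL_m(\mathbb C)$, and the linear map $E_{\Omega,\Omega}^{\nn,s,s}$. It is therefore continuous, which proves item~2.

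\emph{Item 3.} This is the only step with content. We perturb $A$ only on the diagonal block indexed by $\Omega^c$, which leaves the $\Omega$-block, and hence the RIE value, unchanged. Define
\[
\tilde A=A+\eta\,D_\nn(\chi_{\Omega^c}I_s),\qquad\eta\in\mathbb C .
\]
Since $D_\nn(\chi_{\Omega^c}I_s)=(R_{\Omega^c}^{\nn,s})^TR_{\Omega^c}^{\nn,s}$, and $\{\Omega,\Omega^c\}$ is a partition of $[0,1]^d$, Lemma~\ref{r-lemma-beta} gives $R_\Omega^{\nn,s}(R_{\Omega^c}^{\nn,s})^T=O$. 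Hence
\[
R_{\Omega,\Omega}^{\nn,s,s}(\tilde A)=R_{\Omega,\Omega}^{\nn,s,s}(A).
\]
Consequently $\tilde A\in D_\Omega^{\rie,\nn,s}$ and $\rie_\Omega^{\nn,s}(\tilde A)=\rie_\Omega^{\nn,s}(A)$, so the second inequality holds trivially. For the $\Omega^c$-block, Lemma~\ref{r-lemma} gives
\[
R_{\Omega^c,\Omega^c}^{\nn,s,s}(\tilde A)=R_{\Omega^c,\Omega^c}^{\nn,s,s}(A)+\eta I .
\]
This matrix is invertible unless $-\eta$ is one of the finitely many eigenvalues of $R_{\Omega^c,\Omega^c}^{\nn,s,s}(A)$. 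Since $\|\tilde A-A\|\le|\eta|$, we may choose $\eta$ with $0<|\eta|<\epsilon$ avoiding those eigenvalues. Then $\tilde A$ lies in both domains and $\|\tilde A-A\|<\epsilon$. If $\II_\nn^{\Omega^c}=\emptyset$, then $\tilde A=A$ works directly.

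The main obstacle is recognising that the perturbation can be confined to the $\Omega^c$ block. Once that choice is made, the rest is the standard fact that $X+\eta I$ is invertible for all but finitely many $\eta$.
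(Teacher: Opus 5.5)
Your proof is correct. Items~1 and~2 follow the paper exactly: $D_\Omega^{\rie,\nn,s}$ is the preimage of $GL_m(\mathbb C)$ under the linear restriction map, and $\rie_\Omega^{\nn,s}$ is the composition restriction--inversion--expansion. Item~3 is where you genuinely differ.

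The paper argues softly. It uses items~1--2 to get a ball $\mathcal B(A,\delta)\subseteq D_\Omega^{\rie,\nn,s}$, with $\delta\le\epsilon$, on which $\rie_\Omega^{\nn,s}$ moves by less than $\epsilon$. It then applies the open mapping theorem to the surjective map $R_{\Omega^c,\Omega^c}^{\nn,s,s}$ and invokes the density of $GL$ to find a point of that ball lying in $D_{\Omega^c}^{\rie,\nn,s}$.

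You instead write down $\tilde A=A+\eta D_\nn(\chi_{\Omega^c}I_s)$ explicitly. Via Lemma~\ref{r-lemma-beta}, this perturbation does not touch the $\Omega$-block at all, so membership in $D_\Omega^{\rie,\nn,s}$ and the value of the RIE operator are preserved exactly. Invertibility of the $\Omega^c$-block then reduces to avoiding finitely many eigenvalues.

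Your route buys three things:
\begin{itemize}
\item It is constructive and elementary, needing neither items~1--2 nor the open mapping theorem.
\item It gives the stronger conclusion $\rie_\Omega^{\nn,s}(\tilde A)=\rie_\Omega^{\nn,s}(A)$.
\item In the proof of Theorem~\ref{rie-thm}, it makes the step $\{\rie_{\Omega_n}^{\nn,s}(\tilde A_n)-\rie_{\Omega_n}^{\nn,s}(A_n)\}_n\sim_\sigma0$ trivial.
\end{itemize}
In fact, if you make the paper's open-mapping step concrete, using the right inverse $E_{\Omega^c,\Omega^c}^{\nn,s,s}$ from \eqref{RE=I} with target $R_{\Omega^c,\Omega^c}^{\nn,s,s}(A)+\eta I$, you land exactly on your $\tilde A$.

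What the paper's argument buys is independence from the disjointness of $\Omega$ and $\Omega^c$. It uses only openness, continuity and density, so it would still work if the additional invertibility constraint concerned a block overlapping $\Omega$. Your block-confined perturbation would then no longer leave the $\Omega$-block untouched.
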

\begin{proof}
1. By definition,
\begin{align*}
D_\Omega^{\rie,\nn,s}&=\{A\in\mathbb C^{N(\nn)s\times N(\nn)s}:\,R_{\Omega,\Omega}^{\nn,s,s}(A)\textup{ is invertible}\}\\
&=\{A\in\mathbb C^{N(\nn)s\times N(\nn)s}:\,R_{\Omega,\Omega}^{\nn,s,s}(A)\in GL_{N_\nn^\Omega s}(\mathbb C)\}=(R_{\Omega,\Omega}^{\nn,s,s})^{-1}(GL_{N_\nn^\Omega s}(\mathbb C))
\end{align*}
is the pre-image of $GL_{N_\nn^\Omega s}(\mathbb C)$ through the restriction operator $R_{\Omega,\Omega}^{\nn,s,s}:\mathbb C^{N(\nn)s\times N(\nn)s}\to\mathbb C^{N_\nn^\Omega s\times N_\nn^\Omega s}$. Since the latter operator is linear and continuous, and $GL_{N_\nn^\Omega s}(\mathbb C)$ is an open subset of $\mathbb C^{N_\nn^\Omega s\times N_\nn^\Omega s}$, we infer that $D_\Omega^{\rie,\nn,s}$ is an open subset of $\mathbb C^{N(\nn)s\times N(\nn)s}$.

2. By definition, $\rie_\Omega^{\nn,s}$ is the composition of three functions:
\begin{alignat*}{7}
D_\Omega^{\rie,\nn,s}&\xrightarrow{\mbox{\footnotesize$R_{\Omega,\Omega}^{\nn,s,s}$}}GL_{N_\nn^\Omega s}(\mathbb C)&&\xrightarrow{\mbox{\footnotesize$(\cdot)^{-1}$}}GL_{N_\nn^\Omega s}(\mathbb C)&&\xrightarrow{\mbox{\footnotesize$E_{\Omega,\Omega}^{\nn,s,s}$}}\mathbb C^{N(\nn)s\times N(\nn)s}\\
A&\xmapsto{\hphantom{R_{\Omega,\Omega}^{\nn,s,s}}}R_{\Omega,\Omega}^{\nn,s,s}(A)&&\xmapsto{\hphantom{(\cdot)^{-1}}}{R_{\Omega,\Omega}^{\nn,s,s}(A)}^{-1}&&\xmapsto{\hphantom{E_{\Omega,\Omega}^{\nn,s,s}}}E_{\Omega,\Omega}^{\nn,s,s}({R_{\Omega,\Omega}^{\nn,s,s}(A)}^{-1})=\rie_\Omega^{\nn,s}(A)
\end{alignat*}
The first function (the restriction operator) is linear and continuous, the second function (the inversion operator) is continuous, and the third function (the expansion operator) is linear and continuous. Thus, the RIE operator $\rie_\Omega^{\nn,s}$ is continuous as a composition of three continuous functions.

3. Fix $A\in D_\Omega^{\rie,\nn,s}$ and $\epsilon>0$.
Since $\rie_\Omega^{\nn,s}$ is continuous on $D_\Omega^{\rie,\nn,s}$ by item~2, there exists $\delta=\delta_{\epsilon,A}>0$ such that $\|\rie_\Omega^{\nn,s}(B)-\rie_\Omega^{\nn,s}(A)\|<\epsilon$ for every $B\in D_\Omega^{\rie,\nn,s}$ with $\|B-A\|<\delta$.
Since $D_\Omega^{\rie,\nn,s}$ is open by item~1, by decreasing $\delta$ (if necessary), we can assume that the open ball
\[ \mathcal B(A,\delta)=\{B\in\mathbb C^{N(\nn)s\times N(\nn)s}:\,\|B-A\|<\delta\} \]
is contained in $D_\Omega^{\rie,\nn,s}$. Of course, we can also assume that $\delta\le\epsilon$. Thus, every matrix $B\in\mathcal B(A,\delta)$ belongs to $D_\Omega^{\rie,\nn,s}$ and satisfies the conditions
\[ \|B-A\|<\epsilon,\qquad\|\rie_\Omega^{\nn,s}(B)-\rie_\Omega^{\nn,s}(A)\|<\epsilon. \]
We show that $\mathcal B(A,\delta)\cap D_{\Omega^c}^{\rie,\nn,s}\ne\emptyset$. Once this is done, the thesis is proved, because any $\tilde A\in\mathcal B(A,\delta)\cap D_{\Omega^c}^{\rie,\nn,s}$ satisfies all the required conditions. 
An elegant proof of the fact that $\mathcal B(A,\delta)\cap D_{\Omega^c}^{\rie,\nn,s}\ne\emptyset$ can be obtained through the open mapping theorem \cite[p.~99]{Rudinone}, as follows.
The restriction operator $R_{\Omega^c,\Omega^c}^{\nn,s,s}:\mathbb C^{N(\nn)s\times N(\nn)s}\to\mathbb C^{N_\nn^{\Omega^c}s\times N_\nn^{\Omega^c}s}$ is linear and continuous, and it is also surjective by \eqref{RE=I}. Hence, by the open mapping theorem, $R_{\Omega^c,\Omega^c}^{\nn,s,s}$ is an open map, i.e., it maps open sets to open sets. In particular, it maps the open ball $\mathcal B(A,\delta)$ to an open neighborhood $\mathcal U$ of $R_{\Omega^c,\Omega^c}^{\nn,s,s}(A)$. Pick any invertible matrix $C\in\mathcal U$ (which exists because $GL_{N_\nn^{\Omega^c}s}(\mathbb C)$ is dense in $\mathbb C^{N_\nn^{\Omega^c}s\times N_\nn^{\Omega^c}s}$). If $B_C$ is a matrix in $\mathcal B(A,\delta)$ such that $R_{\Omega^c,\Omega^c}^{\nn,s,s}(B_C)=C$, then $R_{\Omega^c,\Omega^c}^{\nn,s,s}(B_C)$ is invertible, i.e., $B_C\in D_{\Omega^c}^{\rie,\nn,s}$. This shows that $\mathcal B(A,\delta)\cap D_{\Omega^c}^{\rie,\nn,s}\ne\emptyset$.
\end{proof}

\begin{theorem}\label{rie-thm}
Let $d,s$ be positive integers, let $\{\nn=\nn(n)\}_n$ be a sequence of positive $d$-indices tending to $\infty$, and, for every~$n$, let $\Omega_n$ be a subset of $[0,1]^d$ such that $\Omega_n\xrightarrow{{\rm w.r.t.}\,\nn}\Omega$, where $\Omega\subseteq[0,1]^d$ is regular.
Let $\kappa:[0,1]^d\times[-\pi,\pi]^d\to\mathbb C^{s\times s}$ be measurable and invertible a.e.\ on $[0,1]^d\times[-\pi,\pi]^d$, and let $\{A_n\}_n$ be a sequence of square matrices with $A_n$ of size $N(\nn)s$ and $\{A_n\}_n\sim_{\rm GLT}\kappa$. Suppose that $A_n\in D_{\Omega_n}^{\rie,\nn,s}$ for every $n$. Then,
\begin{equation}\label{rie-glt}
\{\rie_{\Omega_n}^{\nn,s}(A_n)\}_n\sim_{\rm GLT}\chi_\Omega(\xx)(\kappa(\xx,\btheta))^{-1}.
\end{equation}
\end{theorem}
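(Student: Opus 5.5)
The plan is to reduce the statement to Theorem~\ref{rie-lemma}, applied with the two-set partition $\{\Omega_n,\Omega_n^c\}$ of $[0,1]^d$, and then to combine Theorem~\ref{blockdiag(GLT)=GLT_blocktril(GLT)=GLT} with \textbf{GLT3}. For a fixed $n$, assume first that $A_n\in D^{\rie,\nn,s}_{\Omega_n}\cap D^{\rie,\nn,s}_{\Omega_n^c}$. Then Theorem~\ref{rie-lemma}, in the form \eqref{er1}, gives
\[ \rie_{\Omega_n}^{\nn,s}(A_n)=D_\nn(\chi_{\Omega_n}I_s)\Bigl[\mathop{\rm blockdiag}^{\nn,s,s}_{\Omega_n,\Omega_n^c}(A_n)\Bigr]^{-1}. \]
Since $\Omega_n\xrightarrow{{\rm w.r.t.}\,\nn}\Omega$, we also have $\Omega_n^c\xrightarrow{{\rm w.r.t.}\,\nn}\Omega^c$, because $\Omega_n^c\triangle\Omega^c=\Omega_n\triangle\Omega$. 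Moreover, $\Omega^c$ is regular, since $\partial(\Omega^c)\subseteq\partial\Omega\cup\partial[0,1]^d$, and $\chi_\Omega+\chi_{\Omega^c}=1$ on $[0,1]^d$. Theorem~\ref{blockdiag(GLT)=GLT_blocktril(GLT)=GLT}, in the form \eqref{in-pirtico}, then yields $\{\mathop{\rm blockdiag}_{\Omega_n,\Omega_n^c}^{\nn,s,s}(A_n)\}_n\sim_{\rm GLT}\kappa$. Because $\kappa$ is invertible a.e., \textbf{GLT3} shows that the inverses (equivalently, the pseudoinverses) form a GLT sequence with symbol $\kappa^{-1}$. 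By \textbf{GLT2}, $\{D_\nn(\chi_\Omega I_s)\}_n\sim_{\rm GLT}\chi_\Omega I_s$, since $\chi_\Omega$ is continuous a.e. We also have $D_\nn(\chi_{\Omega_n}I_s)-D_\nn(\chi_\Omega I_s)$ of rank $o(N(\nn))$ by Remark~\ref{rankD-D}, so this difference is zero-distributed. The product rule in \textbf{GLT3} then gives \eqref{rie-glt}.

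The main obstacle is that the hypothesis only guarantees $A_n\in D^{\rie,\nn,s}_{\Omega_n}$. The complementary block $R^{\nn,s,s}_{\Omega_n^c,\Omega_n^c}(A_n)$ may be singular, so Theorem~\ref{rie-lemma} cannot be applied to $A_n$ directly. This is exactly the situation Lemma~\ref{opma}, item~3, is designed for. For each $n$, I will pick $\tilde A_n\in D^{\rie,\nn,s}_{\Omega_n}\cap D^{\rie,\nn,s}_{\Omega_n^c}$ such that
\[ \|\tilde A_n-A_n\|<\epsilon_n,\qquad \|\rie_{\Omega_n}^{\nn,s}(\tilde A_n)-\rie_{\Omega_n}^{\nn,s}(A_n)\|<\epsilon_n, \]
with $\epsilon_n\to0$, for instance $\epsilon_n=1/n$.

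Sequences whose spectral norm tends to zero are zero-distributed, since all their singular values go to $0$ uniformly. So $\{\tilde A_n-A_n\}_n\sim_\sigma 0$, and by \textbf{GLT2}--\textbf{GLT3} this gives $\{\tilde A_n\}_n\sim_{\rm GLT}\kappa$. Running the argument of the first paragraph on $\tilde A_n$ yields $\{\rie_{\Omega_n}^{\nn,s}(\tilde A_n)\}_n\sim_{\rm GLT}\chi_\Omega\kappa^{-1}$. The difference $\rie_{\Omega_n}^{\nn,s}(A_n)-\rie_{\Omega_n}^{\nn,s}(\tilde A_n)$ has norm at most $\epsilon_n\to0$, so it is also zero-distributed, and a final application of \textbf{GLT2}--\textbf{GLT3} transfers the symbol to $\{\rie_{\Omega_n}^{\nn,s}(A_n)\}_n$.

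The remaining checks are minor. Theorem~\ref{rie-lemma} requires a genuine partition of $[0,1]^d$; $\{\Omega_n,\Omega_n^c\}$ satisfies this, and a possibly empty part causes no difficulty because the corresponding block is then vacuous and trivially invertible. The matrix $\tilde A_n$ is square of size $N(\nn)s$, as required by Theorem~\ref{blockdiag(GLT)=GLT_blocktril(GLT)=GLT} with $t=s$.
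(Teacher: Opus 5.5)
Your proposal is correct and follows essentially the same route as the paper: you perturb $A_n$ to $\tilde A_n\in D^{\rie,\nn,s}_{\Omega_n}\cap D^{\rie,\nn,s}_{\Omega_n^c}$ via Lemma~\ref{opma}, express $\rie_{\Omega_n}^{\nn,s}(\tilde A_n)$ through \eqref{er1} with the partition $\{\Omega_n,\Omega_n^c\}$, and conclude with Theorem~\ref{blockdiag(GLT)=GLT_blocktril(GLT)=GLT} and \textbf{GLT2}--\textbf{GLT3}. The differences are only cosmetic. You explicitly justify $\{\tilde A_n\}_n\sim_{\rm GLT}\kappa$, which the paper uses tacitly. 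You also absorb the mismatch between $\chi_{\Omega_n}$ and $\chi_\Omega$ into the diagonal factor before applying the product rule, whereas the paper splits off the low-rank term $(D_\nn(\chi_{\Omega_n}-\chi_\Omega)\otimes I_s)[\cdots]^{-1}$.
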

\begin{proof}
For every $n$, let $\tilde A_n\in D_{\Omega_n}^{\rie,\nn,s}\cap D_{\Omega_n^c}^{\rie,\nn,s}$ be such that
\[ \|\tilde A_n-A_n\|<\frac1n,\qquad\|\rie_{\Omega_n}^{\nn,s}(\tilde A_n)-\rie_{\Omega_n}^{\nn,s}(A_n)\|<\frac1n. \]
Note that such a sequence $\{\tilde A_n\}_n$ exists by Lemma~\ref{opma}.
Note also that $\{\rie_{\Omega_n}^{\nn,s}(\tilde A_n)-\rie_{\Omega_n}^{\nn,s}(A_n)\}_n\sim_\sigma0$ and so, by {\bf GLT2}--{\bf GLT3}, the GLT relation \eqref{rie-glt} is equivalent to
\begin{equation}\label{rie-glt-tilde}
\{\rie_{\Omega_n}^{\nn,s}(\tilde A_n)\}_n\sim_{\rm GLT}\chi_\Omega(\xx)(\kappa(\xx,\btheta))^{-1}.
\end{equation}
We prove \eqref{rie-glt-tilde}. For every $n$, keeping in mind that the matrices $R_{\Omega_n,\Omega_n}^{\nn,s,s}(\tilde A_n)$, $R_{\Omega_n^c,\Omega_n^c}^{\nn,s,s}(\tilde A_n)$ are invertible (because $\tilde A_n\in D_{\Omega_n}^{\rie,\nn,s}\cap D_{\Omega_n^c}^{\rie,\nn,s}$), we have
\begin{align*}
\rie_{\Omega_n}^{\nn,s}(\tilde A_n)&=D_\nn(\chi_{\Omega_n}I_s)\biggl[\mathop{\rm blockdiag}^{\nn,s,s}_{\Omega_n,\Omega_n^c}(\tilde A_n)\biggr]^{-1}\qquad\mbox{\footnotesize(by Theorem~\ref{rie-lemma} applied to $\tilde A_n$ with the partition $\{\Omega_n,\Omega_n^c\}$)}\\
&=D_\nn(\chi_\Omega I_s)\biggl[\mathop{\rm blockdiag}^{\nn,s,s}_{\Omega_n,\Omega_n^c}(\tilde A_n)\biggr]^{-1}+(D_\nn(\chi_{\Omega_n}I_s)-D_\nn(\chi_\Omega I_s))\biggl[\mathop{\rm blockdiag}^{\nn,s,s}_{\Omega_n,\Omega_n^c}(\tilde A_n)\biggr]^{-1}\\
&=D_\nn(\chi_\Omega I_s)\biggl[\mathop{\rm blockdiag}^{\nn,s,s}_{\Omega_n,\Omega_n^c}(\tilde A_n)\biggr]^{-1}+\underbrace{(D_\nn(\chi_{\Omega_n}-\chi_\Omega)\otimes I_s)\biggl[\mathop{\rm blockdiag}^{\nn,s,s}_{\Omega_n,\Omega_n^c}(\tilde A_n)\biggr]^{-1}}_{R_n}.
\end{align*}
Since $\Omega_n\xrightarrow{{\rm w.r.t.}\,\nn}\Omega$, Remark~\ref{rankD-D} implies that
\[ {\rm rank}(D_\nn(\chi_{\Omega_n}-\chi_\Omega))=o(N(\nn)). \]
Hence,
\[ {\rm rank}(R_n)\le s\mathop{\rm rank}(D_\nn(\chi_{\Omega_n}-\chi_\Omega))=o(N(\nn)) \]
and $\{R_n\}_n\sim_\sigma0$. We now apply Theorem~\ref{blockdiag(GLT)=GLT_blocktril(GLT)=GLT} to the GLT sequence $\{\tilde A_n\}_n\sim_{\rm GLT}\kappa$ using the partition $\{\Omega_n,\Omega_n^c\}$, which satisfies $\Omega_n\xrightarrow{{\rm w.r.t.}\,\nn}\Omega$ and $\Omega_n^c\xrightarrow{{\rm w.r.t.}\,\nn}\Omega^c$, where $\{\Omega,\Omega^c\}$ is a regular partition of $[0,1]^d$, because $\Omega$ is regular by assumption and the complement of a regular set is regular as well. In this way, we obtain
\[ \biggl\{\mathop{\rm blockdiag}^{\nn,s,s}_{\Omega_n,\Omega_n^c}(\tilde A_n)\biggr\}_n\sim_{\rm GLT}\kappa. \]
Thus, by {\bf GLT2}--{\bf GLT3} in combination with the assumptions that $\Omega$ is regular (i.e., $\chi_\Omega$ is continuous a.e.\ on $[0,1]^d$) and $\kappa$ is invertible a.e.\ on $[0,1]\times[-\pi,\pi]^d$, we get
\begin{align*}
\{\rie_{\Omega_n}^{\nn,s}(\tilde A_n)\}_n&=\biggl\{D_\nn(\chi_\Omega I_s)\biggl[\mathop{\rm blockdiag}^{\nn,s,s}_{\Omega_n,\Omega_n^c}(\tilde A_n)\biggr]^{-1}+R_n\biggr\}_n\sim_{\rm GLT}\chi_\Omega(\xx)(\kappa(\xx,\btheta))^{-1},
\end{align*}
which completes the proof of \eqref{rie-glt-tilde}.
\end{proof}

In what follows, a regular cover of $[0,1]^d$ is a cover of $[0,1]^d$ consisting of regular sets.

\begin{theorem}\label{AS(GLT)=GLT_MS(GLT)=GLT}
Let $d,s,\nu$ be positive integers, let $\{\nn=\nn(n)\}_n$ be a sequence of positive $d$-indices tending to $\infty$, and, for every~$n$, let $\{\Omega_{1,n},\ldots,\Omega_{\nu,n}\}$ be a cover of $[0,1]^d$ such that $\Omega_{i,n}\xrightarrow{{\rm w.r.t.}\,\nn}\Omega_i$ for every $i=1,\ldots,\nu$, where $\{\Omega_1,\ldots,\Omega_\nu\}$ is a regular cover of $[0,1]^d$. Let $\kappa:[0,1]^d\times[-\pi,\pi]^d\to\mathbb C^{s\times s}$ be measurable and invertible a.e.\ on $[0,1]^d\times[-\pi,\pi]^d$, and let $\{A_n\}_n$ be a sequence of square matrices with $A_n$ of size $N(\nn)s$ and $\{A_n\}_n\sim_{\rm GLT}\kappa$. Then, the following properties hold.
\begin{enumerate}[nolistsep,leftmargin=*]
	\item Suppose that $A_n\in D^{AS,\nn,s}_{\Omega_{1,n},\ldots,\Omega_{\nu,n}}$ for every $n$. Then,
	\begin{equation}\label{clueq-ams}
	\Bigl\{P_{\Omega_{1,n},\ldots,\Omega_{\nu,n}}^{AS,\nn,s}(A_n)\Bigr\}_n\sim_{\rm GLT}\frac{\kappa(\xx,\btheta)}{\sum_{i=1}^\nu\chi_{\Omega_i}(\xx)}.
	\end{equation}
	In particular, if $\sum_{i=1}^\nu\chi_{\Omega_i}=1$ a.e.\ on $[0,1]^d$, which happens, for instance, if $\{\Omega_1,\ldots,\Omega_\nu\}$ is a partition of $[0,1]^d$, then
	\begin{equation}\label{in-pirtico-ams}
	\Bigl\{P_{\Omega_{1,n},\ldots,\Omega_{\nu,n}}^{AS,\nn,s}(A_n)\Bigr\}_n\sim_{\rm GLT}\kappa.
	\end{equation}
	\item Suppose that $A_n\in D^{MS,\nn,s}_{\Omega_{1,n},\ldots,\Omega_{\nu,n}}$ for every $n$. Then,
	\begin{equation}\label{clueq'-ams}
	\Bigl\{P_{\Omega_{1,n},\ldots,\Omega_{\nu,n}}^{MS,\nn,s}(A_n)\Bigr\}_n\sim_{\rm GLT}\kappa.
	\end{equation}
\end{enumerate}
\end{theorem}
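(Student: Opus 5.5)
The plan is to reduce both statements to Theorem~\ref{rie-thm} and then use the algebra of GLT sequences ({\bf GLT2}--{\bf GLT3}). In both items, the domain assumption ($A_n\in D^{AS,\nn,s}_{\Omega_{1,n},\ldots,\Omega_{\nu,n}}$ or $A_n\in D^{MS,\nn,s}_{\Omega_{1,n},\ldots,\Omega_{\nu,n}}$) guarantees $A_n\in D^{\rie,\nn,s}_{\Omega_{i,n}}$ for every $i=1,\ldots,\nu$. The other hypotheses of Theorem~\ref{rie-thm} are exactly those of the present statement: $\Omega_{i,n}\xrightarrow{{\rm w.r.t.}\,\nn}\Omega_i$ with $\Omega_i$ regular, and $\kappa$ invertible a.e. Hence Theorem~\ref{rie-thm} gives
\[ \{\rie_{\Omega_{i,n}}^{\nn,s}(A_n)\}_n\sim_{\rm GLT}\chi_{\Omega_i}(\xx)(\kappa(\xx,\btheta))^{-1},\qquad i=1,\ldots,\nu. \]

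\emph{Item 1 (AS).} By linearity ({\bf GLT3}), the sequence $\{\sum_{i=1}^\nu\rie_{\Omega_{i,n}}^{\nn,s}(A_n)\}_n$ is GLT with symbol $\bigl(\sum_{i=1}^\nu\chi_{\Omega_i}(\xx)\bigr)\kappa(\xx,\btheta)^{-1}$. Because $\{\Omega_1,\ldots,\Omega_\nu\}$ is a cover of $[0,1]^d$, we have $\sum_i\chi_{\Omega_i}\ge1$ everywhere on $[0,1]^d$, so this symbol is invertible a.e. The matrices $\sum_i\rie_{\Omega_{i,n}}^{\nn,s}(A_n)$ are invertible by the definition of $D^{AS}$, so their pseudoinverses are their inverses, namely $P^{AS}_{\Omega_{1,n},\ldots,\Omega_{\nu,n}}(A_n)$. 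The inversion item of {\bf GLT3} then yields \eqref{clueq-ams}. The special case \eqref{in-pirtico-ams} is immediate, since the symbol is determined only a.e.

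\emph{Item 2 (MS).} By the product rule in {\bf GLT3}, $\{\rie_{\Omega_{i,n}}^{\nn,s}(A_n)A_n\}_n\sim_{\rm GLT}\chi_{\Omega_i}(\xx)I_s$, and therefore $\{I-\rie_{\Omega_{i,n}}^{\nn,s}(A_n)A_n\}_n\sim_{\rm GLT}(1-\chi_{\Omega_i}(\xx))I_s$. Taking the ordered product over $i=\nu,\ldots,1$ and using the product rule $\nu-1$ more times gives the symbol $\prod_{i=1}^\nu(1-\chi_{\Omega_i}(\xx))I_s$; the order does not matter here because these symbols are scalar multiples of $I_s$ and commute. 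Since the $\Omega_i$ cover $[0,1]^d$, every $\xx$ lies in some $\Omega_i$, so this product vanishes identically. Consequently
\[ \Bigl\{I-\prod_{i=\nu}^1\bigl(I-\rie_{\Omega_{i,n}}^{\nn,s}(A_n)A_n\bigr)\Bigr\}_n\sim_{\rm GLT}I_s. \]
These matrices are invertible by the definition of $D^{MS}$, and the symbol $I_s$ is invertible, so the inversion rule gives that their inverses also have symbol $I_s$. Multiplying on the left by $A_n$ (product rule) gives \eqref{clueq'-ams}.

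The only delicate points are checking that the pseudoinverse in {\bf GLT3} coincides with the true inverse, which follows from the domain assumptions, and that the symbols being inverted are invertible a.e., which follows from the cover property. I expect no serious obstacle beyond Theorem~\ref{rie-thm} itself. I do note that the MS part needs only that the limits form a cover, not a partition, because the product symbol vanishes everywhere.
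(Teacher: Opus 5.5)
Your proposal is correct and takes essentially the same route as the paper. The AS part applies Theorem~\ref{rie-thm} to each $\Omega_{i,n}$, then the sum and inversion rules of {\bf GLT3}; the MS part uses {\bf GLT2}--{\bf GLT3} products, the cover property to make $\prod_i(1-\chi_{\Omega_i})$ vanish, inversion, and left multiplication by $A_n$, and you rightly note that the pseudoinverse in {\bf GLT3} is the true inverse thanks to the domain assumptions, a point the paper leaves implicit.
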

\begin{proof}
If $\sum_{i=1}^\nu\chi_{\Omega_i}=1$ a.e.\ on $[0,1]^d$, then \eqref{clueq-ams} implies \eqref{in-pirtico-ams}.
Thus, we only have to prove \eqref{clueq-ams} and \eqref{clueq'-ams}.

\medskip

\noindent{\em Proof of \eqref{clueq-ams}.} By Definition~\ref{amSp},
\[ P^{AS,\nn,s}_{\Omega_{1,n},\ldots,\Omega_{\nu,n}}(A_n)=\Biggl[\sum_{i=1}^\nu\rie_{\Omega_{i,n}}^{\nn,s}(A_n)\Biggr]^{-1}. \]
By Theorem~\ref{rie-thm} and {\bf GLT3},
\[ \Biggl\{\sum_{i=1}^\nu\rie_{\Omega_{i,n}}^{\nn,s}(A_n)\Biggr\}_n\sim_{\rm GLT}\sum_{i=1}^\nu\chi_{\Omega_i}(\xx)(\kappa(\xx,\btheta))^{-1}. \]
The thesis \eqref{clueq-ams} now follows from {\bf GLT3} and the assumption that $\{\Omega_1,\ldots,\Omega_\nu\}$ is a cover of $[0,1]^d$, which implies that $\sum_{i=1}^\nu\chi_{\Omega_i}(\xx)\ne0$ for every $\xx\in[0,1]^d$.

\medskip

\noindent{\em Proof of \eqref{clueq'-ams}.} By Definition~\ref{amSp},
\[ P^{MS,\nn,s}_{\Omega_{1,n},\ldots,\Omega_{\nu,n}}(A_n)=A_n\Biggl[I-\prod_{i=\nu}^1\Bigl(I-\rie_{\Omega_{i,n}}^{\nn,s}(A_n)A_n\Bigr)\Biggr]^{-1}. \]
By Theorem~\ref{rie-thm} and {\bf GLT2}--{\bf GLT3},\,\footnote{\,Note that the identity matrix $I=I_{N(\nn)s}$ satisfies $\{I_{N(\nn)s}\}_n\sim_{\rm GLT}I_s$ by {\bf GLT2} since $I_{N(\nn)s}=D_\nn(I_s)=T_\nn(I_s)$.}
\begin{align*}
\Biggl\{I-\prod_{i=\nu}^1\Bigl(I-\rie_{\Omega_{i,n}}^{\nn,s}(A_n)A_n\Bigr)\Biggr\}_n&\sim_{\rm GLT}I_s-\prod_{i=\nu}^1\Bigl(I_s-\chi_{\Omega_i}(\xx)(\kappa(\xx,\btheta))^{-1}\kappa(\xx,\btheta)\Bigr)\\
&\hphantom{{}\sim_{\rm GLT}{}}=I_s-\prod_{i=\nu}^1(I_s-\chi_{\Omega_i}(\xx)I_s)=\biggl[1-\prod_{i=\nu}^1(1-\chi_{\Omega_i}(\xx))\biggr]I_s=I_s,
\end{align*}
where the last equality follows from the fact that $\{\Omega_1,\ldots,\Omega_\nu\}$ is a cover of $[0,1]^d$ by assumption, and so, for every $\xx\in[0,1]^d$, we have $1-\chi_{\Omega_i}(\xx)=0$ for at least one index $i\in\{1,\ldots,\nu\}$ and $\prod_{i=\nu}^1(1-\chi_{\Omega_i}(\xx))=0$.
The thesis \eqref{clueq'-ams} now follows from {\bf GLT3}.
\end{proof}

\begin{remark}\label{few_mesh_sizes=small-o_of_matrix-size}
When applying Theorem~\ref{AS(GLT)=GLT_MS(GLT)=GLT} in the context of DDMs, the limit sets $\Omega_1,\ldots,\Omega_\nu$ often form a partition of $[0,1]^d$, because the overlaps in the sets $\Omega_{1,n},\ldots,\Omega_{\nu,n}$, if any, asymptotically vanish as $n\to\infty$; 
see Remark~\ref{overlap-cases}.
Thus, the GLT relations \eqref{in-pirtico-ams}--\eqref{clueq'-ams} hold. There are important cases, however, where the limit sets $\Omega_1,\ldots,\Omega_\nu$ overlap; see again Remark~\ref{overlap-cases}.
In that cases, while the sequence of MS preconditioners $\{P_{\Omega_{1,n},\ldots,\Omega_{\nu,n}}^{MS,\nn,s}(A_n)\}_n$ continue to satisfy the GLT relation \eqref{clueq'-ams}, the sequence of AS preconditioners $\{P_{\Omega_{1,n},\ldots,\Omega_{\nu,n}}^{AS,\nn,s}(A_n)\}_n$ satisfies the GLT relation \eqref{clueq-ams} and, in particular, it does not share the same symbol as the original sequence $\{A_n\}_n$.
\end{remark}

Two corollaries of Theorem~\ref{AS(GLT)=GLT_MS(GLT)=GLT}, completely analogous to Corollaries~\ref{block-cor}--\ref{block-cor-cor}, are reported below.

\begin{corollary}\label{block-cor'}
Let $d,s,\nu$ be positive integers, let $\{\nn=\nn(n)\}_n$ be a sequence of positive $d$-indices tending to $\infty$, and, for every~$n$, let $\{\Omega_{1,n},\ldots,\Omega_{\nu,n}\}$ be a cover of $[0,1]^d$ with the following property.
\begin{quote}
For every subsequence of indices $n\in\II$ there exist a subsequence of indices $n\in\JJ\subseteq\II$ and a regular partition $\{\Omega_1,\ldots,\Omega_\nu\}$ of $[0,1]^d$, which may depend on $\JJ$, such that $\Omega_{i,n}\xrightarrow{{\rm w.r.t.}\,\nn}\Omega_i$ as $n\to\infty$ in $\JJ$, for every $i=1,\ldots,\nu$.
\end{quote}
Let $\kappa:[0,1]^d\times[-\pi,\pi]^d\to\mathbb C^{s\times s}$ be measurable and invertible a.e.\ on $[0,1]^d\times[-\pi,\pi]^d$, and let $\{A_n\}_n$ be a sequence of square matrices with $A_n$ of size $N(\nn)s$ and $\{A_n\}_n\sim_{\rm GLT}\kappa$. Then, the following implications hold:
\begin{alignat}{3}
&A_n\in D^{AS,\nn,s}_{\Omega_{1,n},\ldots,\Omega_{\nu,n}}\mbox{ for every }n&&\qquad\implies\qquad\Bigl\{P_{\Omega_{1,n},\ldots,\Omega_{\nu,n}}^{AS,\nn,s}(A_n)\Bigr\}_n\sim_{\rm GLT}\kappa,\label{clueq-cor'}\\
&A_n\in D^{MS,\nn,s}_{\Omega_{1,n},\ldots,\Omega_{\nu,n}}\mbox{ for every }n&&\qquad\implies\qquad\Bigl\{P_{\Omega_{1,n},\ldots,\Omega_{\nu,n}}^{MS,\nn,s}(A_n)\Bigr\}_n\sim_{\rm GLT}\kappa.\label{clueq'-cor'}
\end{alignat}
\end{corollary}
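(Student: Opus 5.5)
We follow the same subsequence argument used for Corollary~\ref{block-cor}, with Theorem~\ref{AS(GLT)=GLT_MS(GLT)=GLT} playing the role that Theorem~\ref{blockdiag(GLT)=GLT_blocktril(GLT)=GLT} played there. We give the argument for \eqref{clueq-cor'}; the argument for \eqref{clueq'-cor'} is identical after replacing AS by MS throughout.

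Assume $A_n\in D^{AS,\nn,s}_{\Omega_{1,n},\ldots,\Omega_{\nu,n}}$ for every $n$. Set $P_n=P^{AS,\nn,s}_{\Omega_{1,n},\ldots,\Omega_{\nu,n}}(A_n)$ and $Z_n=P_n-A_n$. Since $\{A_n\}_n\sim_{\rm GLT}\kappa$, properties {\bf GLT2}--{\bf GLT3} show that the thesis $\{P_n\}_n\sim_{\rm GLT}\kappa$ is equivalent to $\{Z_n\}_n\sim_\sigma0$.

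Suppose, by contradiction, that $\{Z_n\}_n$ is not zero-distributed. Then there are $F\in C_c(\mathbb R)$ and a subsequence $n\in\II$ along which
\[ \frac1{N(\nn)s}\sum_{i=1}^{N(\nn)s}F(\sigma_i(Z_n)) \]
converges to a limit $L\ne F(0)$. By hypothesis, there is a further subsequence $\JJ\subseteq\II$ and a regular partition $\{\Omega_1,\ldots,\Omega_\nu\}$ of $[0,1]^d$ such that $\Omega_{i,n}\xrightarrow{{\rm w.r.t.}\,\nn}\Omega_i$ as $n\to\infty$ in $\JJ$, for every $i$.

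A regular partition of $[0,1]^d$ is in particular a regular cover. Moreover, each $\{\Omega_{1,n},\ldots,\Omega_{\nu,n}\}$ is a cover by assumption, and $\{A_n\}_{n\in\JJ}\sim_{\rm GLT}\kappa$ because a subsequence of a GLT sequence is again GLT with the same symbol. We can therefore apply Theorem~\ref{AS(GLT)=GLT_MS(GLT)=GLT} to the sequences indexed by $n\in\JJ$. Since the limit sets form a partition, $\sum_{i=1}^\nu\chi_{\Omega_i}=1$, and \eqref{in-pirtico-ams} gives $\{P_n\}_{n\in\JJ}\sim_{\rm GLT}\kappa$. For MS, \eqref{clueq'-ams} gives the same conclusion directly. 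In both cases $\{Z_n\}_{n\in\JJ}\sim_\sigma0$, so the averages above tend to $F(0)$ along $\JJ$. This contradicts $L\ne F(0)$.

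The only delicate point is to check that Theorem~\ref{AS(GLT)=GLT_MS(GLT)=GLT} and properties {\bf GLT2}--{\bf GLT3} hold along a subsequence $n\in\JJ$. This is standard, since all the definitions involved (GLT sequences, a.c.s., zero distribution) are stated for sequences indexed by an arbitrary infinite subset of $\mathbb N$. With that in place, the argument is routine.
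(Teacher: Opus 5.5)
Your proposal is correct and matches the paper's proof: the same subsequence-by-contradiction argument as in Corollary~\ref{block-cor}, with Theorem~\ref{AS(GLT)=GLT_MS(GLT)=GLT} taking the place of Theorem~\ref{blockdiag(GLT)=GLT_blocktril(GLT)=GLT}. For AS you invoke \eqref{in-pirtico-ams}, which applies because a regular partition is a regular cover with $\sum_i\chi_{\Omega_i}=1$; for MS you invoke \eqref{clueq'-ams}. Your normalization $L\ne F(0)$ is the correct one, whereas the paper's template writes $L\ne0$ and compares the limit with $0$, which is a harmless slip.
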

\begin{proof}
The proof of \eqref{clueq-cor'} (respectively, \eqref{clueq'-cor'}) is verbatim the same as the proof of \eqref{clueq-cor} in Corollary~\ref{block-cor}, with the only difference that ``$\mathop{\rm blockdiag}_{\Omega_{1,n},\ldots,\Omega_{\nu,n}}^{\nn,s,t}(A_n)$'' must be replaced in \eqref{Pndef} with ``$P_{\Omega_{1,n},\ldots,\Omega_{\nu,n}}^{AS,\nn,s}(A_n)$'' (respectively, ``$P_{\Omega_{1,n},\ldots,\Omega_{\nu,n}}^{MS,\nn,s}(A_n)$'') and ``Theorem~\ref{blockdiag(GLT)=GLT_blocktril(GLT)=GLT}'' must be replaced with ``Theorem~\ref{AS(GLT)=GLT_MS(GLT)=GLT}''.
\end{proof}

\begin{corollary}\label{block-cor-cor'}
Let $d,s,\nu,k$ be positive integers with $1\le k\le d$, let $\{\nn=\nn(n)\}_n$ be a sequence of positive $d$-indices tending to $\infty$, and, for every~$n$, let $(n_{k,1},\ldots,n_{k,\nu})=(n_{k,1}(n),\ldots,n_{k,\nu}(n))$ be a partition of $n_k$ (the $k$th component of $\nn$) and let $\{\Omega_{1,n},\ldots,\Omega_{\nu,n}\}$ be the partition of $[0,1]^d$ defined by \eqref{o1}--\eqref{onu}.
Let $\kappa:[0,1]^d\times[-\pi,\pi]^d\to\mathbb C^{s\times s}$ be measurable 
and let $\{A_n\}_n$ be a sequence of square matrices with $A_n$ of size $N(\nn)s$ and $\{A_n\}_n\sim_{\rm GLT}\kappa$. Then, the following implications hold:
\begin{alignat*}{3}
&A_n\in D^{AS,\nn,s}_{\Omega_{1,n},\ldots,\Omega_{\nu,n}}\mbox{ for every }n&&\qquad\implies\qquad\Bigl\{P_{\Omega_{1,n},\ldots,\Omega_{\nu,n}}^{AS,\nn,s}(A_n)\Bigr\}_n\sim_{\rm GLT}\kappa,\\
&A_n\in D^{MS,\nn,s}_{\Omega_{1,n},\ldots,\Omega_{\nu,n}}\mbox{ for every }n&&\qquad\implies\qquad\Bigl\{P_{\Omega_{1,n},\ldots,\Omega_{\nu,n}}^{MS,\nn,s}(A_n)\Bigr\}_n\sim_{\rm GLT}\kappa.
\end{alignat*}
\end{corollary}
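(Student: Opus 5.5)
The plan is to bypass Theorem~\ref{AS(GLT)=GLT_MS(GLT)=GLT} and Corollary~\ref{block-cor'} entirely, and instead reduce the statement to Corollary~\ref{block-cor-cor} via Theorem~\ref{S->JGS}. This route matters because, unlike Corollary~\ref{block-cor'}, the present statement does not assume that $\kappa$ is invertible a.e. That hypothesis was only needed in Theorem~\ref{rie-thm} to make sense of $\kappa^{-1}$. Here, however, the sets $\Omega_{1,n},\ldots,\Omega_{\nu,n}$ in \eqref{o1}--\eqref{onu} form, for every fixed $n$, an honest partition of $[0,1]^d$, not merely a cover whose limit is a partition. So the exact identities of Theorem~\ref{S->JGS} are available at the finite level.

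Concretely, fix $n$ and suppose $A_n\in D^{AS,\nn,s}_{\Omega_{1,n},\ldots,\Omega_{\nu,n}}$. Since $\{\Omega_{1,n},\ldots,\Omega_{\nu,n}\}$ is a partition of $[0,1]^d$, \eqref{AS=BJ} gives $P^{AS,\nn,s}_{\Omega_{1,n},\ldots,\Omega_{\nu,n}}(A_n)=P^{BJ,\nn,s,s}_{\Omega_{1,n},\ldots,\Omega_{\nu,n}}(A_n)$. Likewise, if $A_n\in D^{MS,\nn,s}_{\Omega_{1,n},\ldots,\Omega_{\nu,n}}$, then \eqref{MS=BGS} gives $P^{MS,\nn,s}_{\Omega_{1,n},\ldots,\Omega_{\nu,n}}(A_n)=P^{BGS,\nn,s,s}_{\Omega_{1,n},\ldots,\Omega_{\nu,n}}(A_n)$. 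Thus, under the respective domain hypotheses, the AS (respectively, MS) sequence coincides term by term with the BJ (respectively, BGS) sequence built from the same partitions.

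Next I would invoke Corollary~\ref{block-cor-cor} with $t=s$. Its hypotheses are exactly those of the present statement: the partitions $(n_{k,1},\ldots,n_{k,\nu})$ of $n_k$, the sets \eqref{o1}--\eqref{onu}, $\kappa$ measurable, and $\{A_n\}_n\sim_{\rm GLT}\kappa$. It yields $\{P^{BJ,\nn,s,s}_{\Omega_{1,n},\ldots,\Omega_{\nu,n}}(A_n)\}_n\sim_{\rm GLT}\kappa$ and $\{P^{BGS,\nn,s,s}_{\Omega_{1,n},\ldots,\Omega_{\nu,n}}(A_n)\}_n\sim_{\rm GLT}\kappa$. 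Combined with the term-by-term identities above, this gives both implications.

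There is essentially no obstacle in this argument. The only point to check carefully is that Theorem~\ref{S->JGS} requires a genuine partition for each $n$ and not just in the limit. This holds because the intervals in \eqref{o1}--\eqref{onu} are consecutive, half-open, disjoint, and cover $[0,1]$ in the $k$th coordinate, using $n_{k,1}+\cdots+n_{k,\nu}=n_k$. Should one prefer to go through Corollary~\ref{block-cor'} instead, the a.e.\ invertibility of $\kappa$ would be needed, which is precisely why the reduction to the BJ/BGS case is the right route here.
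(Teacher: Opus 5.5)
Your proposal is correct and matches the paper's second proof. Theorem~\ref{S->JGS} applies because the sets \eqref{o1}--\eqref{onu} form a genuine partition for every $n$, so it identifies the AS/MS preconditioners term by term with the BJ/BGS ones, and Corollary~\ref{block-cor-cor} with $t=s$ then gives the result without requiring $\kappa$ to be invertible a.e. The paper also gives, as a first proof, the route through Corollary~\ref{block-cor'} that you mention, which needs that extra invertibility assumption.
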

\begin{proof}
We give two proofs of this corollary. The first one requires the additional assumption that $\kappa$ is invertible a.e.\ on $[0,1]^d\times[-\pi,\pi]^d$ in order to apply Corollary~\ref{block-cor'}. The second one does not require additional assumptions.

\medskip

\noindent{\em First proof.} It is verbatim the same as the proof of Corollary~\ref{block-cor-cor} with the only difference that each occurrence of ``Corollary~\ref{block-cor}'' must be replaced with ``Corollary~\ref{block-cor'}''.

\medskip

\noindent{\em Second proof.} Since $\{\Omega_{1,n},\ldots,\Omega_{\nu,n}\}$ is a partition of $[0,1]^d$ for every $n$, Theorem~\ref{S->JGS} implies that, for every $n$,
\begin{alignat*}{3}
P_{\Omega_{1,n},\ldots,\Omega_{\nu,n}}^{AS,\nn,s}(A)&=P_{\Omega_{1,n},\ldots,\Omega_{\nu,n}}^{BJ,\nn,s,s}(A),&\qquad A&\in D^{AS,\nn,s}_{\Omega_{1,n},\ldots,\Omega_{\nu,n}},\\[5pt]
P_{\Omega_{1,n},\ldots,\Omega_{\nu,n}}^{MS,\nn,s}(A)&=P_{\Omega_{1,n},\ldots,\Omega_{\nu,n}}^{BGS,\nn,s,s}(A),&\qquad A&\in D^{MS,\nn,s}_{\Omega_{1,n},\ldots,\Omega_{\nu,n}}.
\end{alignat*}
Thus, the thesis follows immediately from Corollary~\ref{block-cor-cor}.
\end{proof}

\section{Applications to GLT preconditioning}\label{sec:app_discussion}

The idea behind the so-called GLT preconditioning can be described as follows.
Suppose we are given a matrix $A_n$ belonging to a GLT sequence $\{A_n\}_n\sim_{\rm GLT}\kappa$.
For every $n$, choose an invertible matrix $P_n$ of the same size as $A_n$ such that $\{P_n\}_n\sim_{\rm GLT}\xi$, where $\xi$ is assumed to be invertible a.e.
We are interested in analyzing/predicting the performance of $P_n$ as a (Krylov) preconditioner for a linear system with coefficient matrix $A_n$.

By {\bf GLT3}, the sequence of preconditioned matrices $\{P_n^{-1}A_n\}_n$ satisfies
\begin{equation}\label{pica-GLT}
\{P_n^{-1}A_n\}_n\sim_{\rm GLT}\xi^{-1}\kappa.
\end{equation}
By {\bf GLT1}, the GLT relation \eqref{pica-GLT} implies the singular value distribution
\begin{equation}\label{pica-sigma}
\{P_n^{-1}A_n\}_n\sim_\sigma\xi^{-1}\kappa.
\end{equation}
The GLT relation \eqref{pica-GLT} often implies also the spectral distribution
\begin{equation}\label{pica-lambda}
\{P_n^{-1}A_n\}_n\sim_\lambda\xi^{-1}\kappa.
\end{equation}
This happens, for instance, if the matrices $A_n$ are Hermitian and the preconditioners $P_n$ are Hermitian positive definite; see Theorem~\ref{GLT-preconditioning} below.

The spectral distribution \eqref{pica-lambda} implies the cluster of the eigenvalues of $P_n^{-1}A_n$ at the range of $\xi^{-1}\kappa$, which is defined as the union of the ranges of the eigenvalues of $\xi^{-1}\kappa$; see \cite[Sections~2.3.2 and~2.4.2]{bgd} for details.
In particular, if the range of $\xi^{-1}\kappa$ is small and far from $0$, then the eigenvalues of $P_n^{-1}A_n$ are clustered at a small set far from $0$. In this situation, we can predict a good performance of $P_n$ as a preconditioner for $A_n$ on the basis of the convergence properties of Krylov methods such as the conjugate gradient (CG) and the generalized minimal residual (GMRES) methods; see \cite[Section~6.11]{Saad}.

\begin{remark}\label{P-efficiency}
Given a GLT sequence $\{A_n\}_n\sim_{\rm GLT}\kappa$, in this paper we have considered for $\{A_n\}_n$ four different sequences of preconditioners $\{P_n\}_n$, namely, the sequences of BJ/BGS/AS/MS preconditioners. Under the (mild) assumptions of our main Theorems~\ref{blockdiag(GLT)=GLT_blocktril(GLT)=GLT} and~\ref{AS(GLT)=GLT_MS(GLT)=GLT}, we have proved the following.
\begin{itemize}[nolistsep,leftmargin=*]
	\item If $\{P_n\}_n$ denotes either the sequence of BJ preconditioners or the sequence of BGS preconditioners or the sequence of MS preconditioners for $\{A_n\}_n$, then we have $\{P_n\}_n\sim_{\rm GLT}\xi$, where $\xi=\kappa$. In other words, $\{P_n\}_n$ is a GLT sequence with symbol $\kappa$ exactly as $\{A_n\}_n$.
	This is the most ``lucky'' case, in which \eqref{pica-GLT}--\eqref{pica-lambda} become
	\begin{equation*}
	\{P_n^{-1}A_n\}_n\sim_{\rm GLT}I,\qquad\{P_n^{-1}A_n\}_n\sim_\sigma I,\qquad\{P_n^{-1}A_n\}_n\sim_\lambda I.
	\end{equation*}
	The range of the identity symbol $I$ is the same as the range of the identically~$1$ function and reduces to the singleton $\{1\}$. Hence, the spectral distribution $\{P_n^{-1}A_n\}_n\sim_\lambda I$ implies the cluster of the eigenvalues of $P_n^{-1}A_n$ at $1$. Thus, we can predict that $P_n$ is an efficient preconditioner for $A_n$. In conclusion, the BJ/BGS/MS preconditioners are expected to be efficient when applied to a matrix $A_n$ belonging to a GLT sequence.
	\item If $\{P_n\}_n$ denotes the sequence of AS preconditioners for $\{A_n\}_n$, then we have $\{P_n\}_n\sim_{\rm GLT}\xi$, where $\xi(\xx,\btheta)=\kappa(\xx,\btheta)/\sum_{i=1}^\nu\chi_{\Omega_i}(\xx)$ and $\Omega_1,\ldots,\Omega_\nu$ are the limit sets of those used in the construction of $P_n$ (see Theorem~\ref{AS(GLT)=GLT_MS(GLT)=GLT}). In other words, $\{P_n\}_n$ is a GLT sequence with a symbol $\xi$ that is approximately equal to $\kappa$, though not exactly equal to $\kappa$.
	In this case, \eqref{pica-GLT}--\eqref{pica-lambda} become
	\begin{equation*}
	\{P_n^{-1}A_n\}_n\sim_{\rm GLT}\sum_{i=1}^\nu\chi_{\Omega_i}(\xx)I,\qquad\{P_n^{-1}A_n\}_n\sim_\sigma\sum_{i=1}^\nu\chi_{\Omega_i}(\xx)I,\qquad\{P_n^{-1}A_n\}_n\sim_\lambda\sum_{i=1}^\nu\chi_{\Omega_i}(\xx)I.
	\end{equation*}
	The range of the symbol $\sum_{i=1}^\nu\chi_{\Omega_i}(\xx)I$ is the same as the range of the function $\sum_{i=1}^\nu\chi_{\Omega_i}(\xx)$ and reduces to a subset of $\{1,\ldots,\nu\}$.\,\footnote{\,It reduces to $\{1\}$ in the case where $\Omega_1,\ldots,\Omega_\nu$ form a partition of $[0,1]$.}
	Hence, the spectral distribution $\{P_n^{-1}A_n\}_n\sim_\lambda\chi_{\Omega_i}(\xx)I$ implies the cluster of the eigenvalues of $P_n^{-1}A_n$ at a subset of $\{1,\ldots,\nu\}$. Since the latter is a small set far from $0$, we can predict that $P_n$ is an efficient preconditioner for $A_n$. In conclusion, the AS preconditioner is expected to be efficient when applied to a matrix $A_n$ belonging to a GLT sequence.
\end{itemize}
Example~\ref{exa:num} provides a numerical confirmation of the above expectations.
\end{remark}

\begin{theorem}\label{GLT-preconditioning}
Let $d,s$ be positive integers and let $\{\nn=\nn(n)\}_n$ be a sequence of positive $d$-indices tending to $\infty$.
Let $\{A_n\}_n$ be a sequence of Hermitian matrices and let $\{P_n\}_n$ be a sequence of Hermitian positive definite matrices, with $A_n$ and $P_n$ of size $N(\nn)s$.
Suppose that $\{A_n\}_n\sim_{\rm GLT}\kappa$ and $\{P_n\}_n\sim_{\rm GLT}\xi$, where $\kappa,\xi:[0,1]^d\times[-\pi,\pi]^d\to\mathbb C^{s\times s}$ are measurable and $\xi$ is invertible a.e.~on $[0,1]^d\times[-\pi,\pi]^d$. Then, the sequence of preconditioned matrices $P_n^{-1}A_n$ satisfies
\begin{equation*}
\{P_n^{-1}A_n\}_n\sim_{\rm GLT}\xi^{-1}\kappa,\qquad\{P_n^{-1}A_n\}_n\sim_\sigma\xi^{-1}\kappa,\qquad\{P_n^{-1}A_n\}_n\sim_\lambda\xi^{-1}\kappa.
\end{equation*}
\end{theorem}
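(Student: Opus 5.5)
The first two relations follow from the GLT axioms alone. By {\bf GLT3} applied to $\{P_n\}_n\sim_{\rm GLT}\xi$ with $\xi$ invertible a.e., we get $\{P_n^\dag\}_n=\{P_n^{-1}\}_n\sim_{\rm GLT}\xi^{-1}$; here $P_n^\dag=P_n^{-1}$ because each $P_n$ is positive definite. Applying {\bf GLT3} again to the product with $\{A_n\}_n\sim_{\rm GLT}\kappa$ gives $\{P_n^{-1}A_n\}_n\sim_{\rm GLT}\xi^{-1}\kappa$. Then {\bf GLT1} yields $\{P_n^{-1}A_n\}_n\sim_\sigma\xi^{-1}\kappa$.

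The spectral relation is the real content, since $P_n^{-1}A_n$ is not Hermitian. The plan is to pass to the similar matrix
\[ X_n=P_n^{-1/2}A_nP_n^{-1/2}=P_n^{1/2}(P_n^{-1}A_n)P_n^{-1/2}, \]
which is Hermitian and has the same eigenvalues as $P_n^{-1}A_n$. If I can show $\{X_n\}_n\sim_{\rm GLT}\xi^{-1/2}\kappa\,\xi^{-1/2}$, then {\bf GLT1} gives $\{X_n\}_n\sim_\lambda\xi^{-1/2}\kappa\,\xi^{-1/2}$. This symbol is pointwise similar to $\xi^{-1}\kappa$, so it has the same eigenvalues a.e., and the definition of $\sim_\lambda$ only involves $\lambda_i(f(\xx))$. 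Hence $\{P_n^{-1}A_n\}_n\sim_\lambda\xi^{-1}\kappa$.

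For this I need two facts.
\begin{itemize}[nolistsep,leftmargin=*]
	\item $\xi$ is Hermitian positive semidefinite a.e. By {\bf GLT1}, $\xi$ is Hermitian a.e. and $\{P_n\}_n\sim_\lambda\xi$. Because all eigenvalues of $P_n$ are positive, taking test functions $F$ supported in $(-\infty,0)$ forces the eigenvalues of $\xi$ to be nonnegative a.e. Combined with a.e. invertibility, $\xi$ is positive definite a.e.
	\item $\{P_n^{1/2}\}_n\sim_{\rm GLT}\xi^{1/2}$. This is a continuous-functional-calculus statement for Hermitian GLT sequences (for continuous $f$, $\{f(P_n)\}_n\sim_{\rm GLT}f(\xi)$), which is not among {\bf GLT1}--{\bf GLT3} as listed. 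I would cite it from the GLT literature \cite{bg,GLTbookII}.
\end{itemize}
With these, {\bf GLT3} (inverse and products) gives $\{P_n^{-1/2}\}_n\sim_{\rm GLT}\xi^{-1/2}$ and hence $\{X_n\}_n\sim_{\rm GLT}\xi^{-1/2}\kappa\,\xi^{-1/2}$.

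The main obstacle is the square-root step. If the functional-calculus result cannot be cited, I would prove it by approximating $\sqrt{\cdot}$ uniformly on bounded intervals by polynomials and using {\bf GLT3} for sums and products. The hard part there is that the spectra of $P_n$ need not be uniformly bounded, so this must be handled with an a.c.s. truncation argument. As an alternative route, I would invoke the known result that if $X_n=Y_n+Z_n$ with $Y_n$ Hermitian, $\{X_n\}\sim_{\rm GLT}\kappa'$, and $\|Z_n\|_1=o(N(\nn))$ (trace norm), then $\{X_n\}\sim_\lambda\kappa'$; but the similarity approach seems cleaner.
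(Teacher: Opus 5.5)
Your proof is correct. The paper gives no argument of its own here. It refers to \cite[Section~3]{GLH} for $d=1$ and asserts that the multilevel block case follows by straightforward adaptation. Your write-up in effect supplies that adaptation directly, working with the multilevel block properties {\bf GLT1}--{\bf GLT3} for arbitrary $d$ and $s$.

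The GLT and singular value relations use nothing beyond {\bf GLT3} and {\bf GLT1} as recalled in the paper. The only ingredient outside that list is the Hermitian functional calculus $\{f(P_n)\}_n\sim_{\rm GLT}f(\xi)$, which is a standard property in the block GLT literature (e.g.\ \cite{bg,bgd}). Citing it is legitimate, so the polynomial/a.c.s.\ fallback you sketch is unnecessary.

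When you invoke it, make two things explicit:
\begin{itemize}
\item Take $f$ continuous on all of $\mathbb R$, e.g.\ $f(x)=\sqrt{|x|}$. Then $f(P_n)=P_n^{1/2}$ because $P_n$ is positive definite.
\item $f(\xi)=\xi^{1/2}$ a.e.\ precisely because of your first bullet ($\xi$ Hermitian positive definite a.e.).
\end{itemize}
This is where positive definiteness of $P_n$ does real work: it fixes the sign of $\xi$ so the square root is the right symbol, and it makes $P_n^{-1}A_n$ similar to the Hermitian matrix $P_n^{-1/2}A_nP_n^{-1/2}$. The final transfer from $\xi^{-1/2}\kappa\,\xi^{-1/2}$ to $\xi^{-1}\kappa$ is fine, since both sides of the $\sim_\lambda$ definition depend only on eigenvalues.
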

\begin{proof}
The proof in the case $d=1$ was given in \cite[Section~3]{GLH}. Up to straightforward adaptations, the proof in the case of an arbitrary $d\ge1$ is the same as the proof in the case $d=1$.
\end{proof}

\begin{example}\label{exa:num}
In this example, we illustrate the effectiveness of the BJ/BGS/AS/MS preconditioners for GLT sequences, as predicted by Remark~\ref{P-efficiency}.
The example is organized as follows.

\begin{table}
\footnotesize
\centering
\caption{Example~\ref{exa:num} --- Number of PGMRES iterations with BJ preconditioner $P^{BJ}_{n,p,\alpha,\beta,\nu}$ for solving the linear system \eqref{sistema_normalizzato} up to a precision of $10^{-6}$.}
\label{BJ-table}
\begin{tabular}{rrrrrr}
\toprule
 & & \multicolumn{4}{c}{PGMRES iterations with BJ preconditioner}\\
\cline{3-6}
$n$ & $N_{n,p}$ & $\hphantom{100}\nu=2$ & $\hphantom{100}\nu=4$ & $\hphantom{100}\nu=8$ & $\hphantom{100}\nu=16^{\vphantom{\int^0}}$\\
\midrule
9 & 20 & 2 & 6 & 13 & 13\\
19 & 40 & 2 & 6 & 12 & 21\\
39 & 80 & 2 & 6 & 12 & 20\\
79 & 160 & 2 & 6 & 12 & 22\\
159 & 320 & 2 & 6 & 13 & 23\\
319 & 640 & 2 & 6 & 13 & 24\\
639 & 1280 & 2 & 6 & 13 & 25\\
\bottomrule
\end{tabular}
\vspace{20pt}
\centering
\caption{Example~\ref{exa:num} --- Number of PGMRES iterations with BGS preconditioner $P^{BGS}_{n,p,\alpha,\beta,\nu}$ for solving the linear system \eqref{sistema_normalizzato} up to a precision of $10^{-6}$.}
\label{BGS-table}
\begin{tabular}{rrrrrr}
\toprule
 & & \multicolumn{4}{c}{PGMRES iterations with BGS preconditioner}\\
\cline{3-6}
$n$ & $N_{n,p}$ & $\hphantom{100}\nu=2$ & $\hphantom{100}\nu=4$ & $\hphantom{100}\nu=8$ & $\hphantom{100}\nu=16^{\vphantom{\int^0}}$\\
\midrule
9 & 20 & 2 & 5 & 9 & 10\\
19 & 40 & 2 & 6 & 10 & 16\\
39 & 80 & 2 & 6 & 11 & 19\\
79 & 160 & 2 & 6 & 11 & 21\\
159 & 320 & 2 & 6 & 12 & 23\\
319 & 640 & 2 & 6 & 12 & 24\\
639 & 1280 & 2 & 6 & 12 & 25\\
\bottomrule
\end{tabular}
\vspace{20pt}
\centering
\caption{Example~\ref{exa:num} --- Number of PGMRES iterations with AS preconditioner $P^{AS}_{n,p,\alpha,\beta,v}$ for solving the linear system \eqref{sistema_normalizzato} up to a precision of $10^{-6}$.}
\label{AS-table}
\begin{tabular}{rrrrrr}
\toprule
 & & \multicolumn{4}{c}{PGMRES iterations with AS preconditioner}\\
\cline{3-6}
$n$ & $N_{n,p}$ & $\hphantom{100}v=1$ & $\hphantom{100}v=2$ & $\hphantom{100}v=3$ & $\hphantom{100}v=4^{\vphantom{\int^0}}$\\
\midrule
9 & 20 & 10 & 9 & 9 & 8\\
19 & 40 & 9 & 9 & 8 & 10\\
39 & 80 & 9 & 9 & 8 & 8\\
79 & 160 & 10 & 9 & 8 & 8\\
159 & 320 & 10 & 9 & 9 & 8\\
319 & 640 & 10 & 9 & 9 & 8\\
639 & 1280 & 10 & 9 & 9 & 9\\
\bottomrule
\end{tabular}
\vspace{20pt}
\centering
\caption{Example~\ref{exa:num} --- Number of PGMRES iterations with MS preconditioner $P^{MS}_{n,p,\alpha,\beta,v}$ for solving the linear system \eqref{sistema_normalizzato} up to a precision of $10^{-6}$.}
\label{MS-table}
\begin{tabular}{rrrrrr}
\toprule
 & & \multicolumn{4}{c}{PGMRES iterations with MS preconditioner}\\
\cline{3-6}
$n$ & $N_{n,p}$ & $\hphantom{100}v=1$ & $\hphantom{100}v=2$ & $\hphantom{100}v=3$ & $\hphantom{100}v=4^{\vphantom{\int^0}}$\\
\midrule
9 & 20 & 5 & 4 & 4 & 3\\
19 & 40 & 5 & 5 & 4 & 4\\
39 & 80 & 5 & 5 & 5 & 4\\
79 & 160 & 5 & 5 & 5 & 4\\
159 & 320 & 5 & 5 & 5 & 4\\
319 & 640 & 5 & 5 & 5 & 5\\
639 & 1280 & 5 & 5 & 5 & 5\\
\bottomrule
\end{tabular}
\end{table}

\begin{enumerate}[nolistsep,leftmargin=20pt]
	\item[(A)] We consider the $1$-level GLT sequence $\{n^{-1}B_{n,p,\alpha,\beta}\}_n\sim_{\rm GLT}f_p(\theta)$ of Example~\ref{exa6} for fixed values of $p$, $\alpha$, $\beta$, with $n$ varying in the set of indices $\JJ_{\alpha,\beta}=\{n\in\mathbb N:n\ge\max(1-\alpha,\beta)/(\beta-\alpha)\}$; see Example~\ref{exa6} for the reason behind asking that $n\in\JJ_{\alpha,\beta}$. Recall that $B_{n,p,\alpha,\beta}$ is given by \eqref{mat_rhs} and the size of $B_{n,p,\alpha,\beta}$ is $N_{n,p}=2(n+p-2)$.
	\item[(B)] For every integer $\nu\ge2$, every $p$ and every $n\in\JJ_{\alpha,\beta}$ such that $N_{n,p}\ge\nu$, we define $(N_{n,p,1},\ldots,N_{n,p,\nu})$ as the partition of $N_{n,p}$ given by
	\begin{equation}\label{partitionNnp}
	N_{n,p,i}=\lfloor N_{n,p}/\nu\rfloor,\qquad i=1,\ldots,\nu-1,\qquad N_{n,p,\nu}=N_{n,p}-(\nu-1)\lfloor N_{n,p}/\nu\rfloor.
	\end{equation}
	We consider the BJ/BGS preconditioners for $n^{-1}B_{n,p,\alpha,\beta}$ defined as in Example~\ref{recupero}, equations~\eqref{block-J}--\eqref{block-GS}, with $s=t=1$, $n$ replaced by $N_{n,p}$, and partition of $N_{n,p}$ given by $(N_{n,p,1},\ldots,N_{n,p,\nu})$; we denote them by
	\begin{align*}
	P^{BJ}_{n,p,\alpha,\beta,\nu}:=P^{BJ,N_{n,p},1,1}_{N_{n,p,1},\ldots,N_{n,p,\nu}}(n^{-1}B_{n,p,\alpha,\beta}),\\[5pt]
	P^{BGS}_{n,p,\alpha,\beta,\nu}:=P^{BGS,N_{n,p},1,1}_{N_{n,p,1},\ldots,N_{n,p,\nu}}(n^{-1}B_{n,p,\alpha,\beta}).
	\end{align*}
	Both preconditioners will turn out to be invertible just like $n^{-1}B_{n,p,\alpha,\beta}$.
	\item[(C)] For every integer $v\ge1$, every $p$ and every $n\in\JJ_{\alpha,\beta}$ such that $N_{n,p}\ge4(v+1)$, let $(N_{n,p,1},N_{n,p,2},N_{n,p,3},N_{n,p,4})$ be the partition of $N_{n,p}$ given by \eqref{partitionNnp} for $\nu=4$ and note that $N_{n,p,i}\ge v+1$ for all $i=1,2,3,4$. Define the following cover $\{\Omega_{1,n,p,v},\Omega_{2,n,p,v},\Omega_{3,n,p,v},\Omega_{4,n,p,v}\}$ of $[0,1]$:
	\begin{align*}
	\Omega_{1,n,p,v}&=\left[0,\frac{N_{n,p,1}+v}{N_{n,p}}\right],\\[3pt]
	\Omega_{2,n,p,v}&=\left[\frac{N_{n,p,1}-v}{N_{n,p}},\frac{N_{n,p,1}+N_{n,p,2}+v}{N_{n,p}}\right],\\[3pt]
	\Omega_{3,n,p,v}&=\left[\frac{N_{n,p,1}+N_{n,p,2}-v}{N_{n,p}},\frac{N_{n,p,1}+N_{n,p,2}+N_{n,p,3}+v}{N_{n,p}}\right],\\[3pt]
	\Omega_{4,n,p,v}&=\left[\frac{N_{n,p,1}+N_{n,p,2}+N_{n,p,3}-v}{N_{n,p}},1\right].
	\end{align*}
	By construction, the sets of indices $i\in\{1,\ldots,N_{n,p}\}$ associated with $\Omega_{1,n,p,v},\Omega_{2,n,p,v},\Omega_{3,n,p,v},\Omega_{4,n,p,v}$ are given by
	\begin{align*}
	\II_{N_{n,p}}^{\Omega_{1,n,p,v}}&=\{1,\ldots,N_{n,p,1}+v\},\\[3pt]
	\II_{N_{n,p}}^{\Omega_{2,n,p,v}}&=\{N_{n,p,1}-v,\ldots,N_{n,p,1}+N_{n,p,2}+v\},\\[3pt]
	\II_{N_{n,p}}^{\Omega_{3,n,p,v}}&=\{N_{n,p,1}+N_{n,p,2}-v,\ldots,N_{n,p,1}+N_{n,p,2}+N_{n,p,3}+v\},\\[3pt]
	\II_{N_{n,p}}^{\Omega_{4,n,p,v}}&=\{N_{n,p,1}+N_{n,p,2}+N_{n,p,3}-v,\ldots,N_{n,p}\}.
	\end{align*}
	Note that $v$ determines the size of the overlaps between the sets of the cover $\{\Omega_{1,n,p,v},\Omega_{2,n,p,v},\Omega_{3,n,p,v},\Omega_{4,n,p,v}\}$.
	Note also that the overlaps in the sets $\Omega_{1,n,p,v},\Omega_{2,n,p,v},\Omega_{3,n,p,v},\Omega_{4,n,p,v}$ vanish 
	as $n\to\infty$, and the limiting sets $\Omega_{1,p,v}=[0,\frac14]$, $\Omega_{2,p,v}=(\frac14,\frac12]$, $\Omega_{3,p,v}=(\frac12,\frac34]$, $\Omega_{4,p,v}=(\frac34,1]$ form a partition of $[0,1]$.
	We consider the AS/MS preconditioners for $n^{-1}B_{n,p,\alpha,\beta}$ defined as in Definition~\ref{amSp} with $d=s=1$, $\nu=4$, $\nn=N_{n,p}$, and $\{\Omega_1,\ldots,\Omega_\nu\}=\{\Omega_{1,n,p,v},\Omega_{2,n,p,v},\Omega_{3,n,p,v},\Omega_{4,n,p,v}\}$; we denote them by
	\begin{align*}
	P^{AS}_{n,p,\alpha,\beta,v}:=P^{AS,N_{n,p},1}_{\Omega_{1,n,p,v},\Omega_{2,n,p,v},\Omega_{3,n,p,v},\Omega_{4,n,p,v}}(n^{-1}B_{n,p,\alpha,\beta}),\\[5pt]
	P^{MS}_{n,p,\alpha,\beta,v}:=P^{MS,N_{n,p},1}_{\Omega_{1,n,p,v},\Omega_{2,n,p,v},\Omega_{3,n,p,v},\Omega_{4,n,p,v}}(n^{-1}B_{n,p,\alpha,\beta}).
	\end{align*}
	Both preconditioners will turn out to be invertible just like $n^{-1}B_{n,p,\alpha,\beta}$.
	\item[(D)] We report in Tables~\ref{BJ-table}--\ref{MS-table}, for increasing values of $\nu$, $v$, $n$, the number of iterations needed by the PGMRES method with the above BJ/BGS/AS/MS preconditioners for solving, up to a precision of $10^{-6}$, the normalized $N_{n,p}\times N_{n,p}$ linear system \eqref{the_sys}, i.e.,
	\begin{equation}\label{sistema_normalizzato}
	n^{-1}B_{n,p,\alpha,\beta}\,\boldsymbol u=n^{-1}\boldsymbol f_{n,p,\alpha,\beta},
	\end{equation}
	in the case where $p=3$, $\alpha=\frac25$, $\beta=\frac35$, $f(x)=1$.
	The PGMRES method is started with the zero vector $\mathbf0$ and applied without restarting.
	It is clear from Tables~\ref{BJ-table}--\ref{MS-table} that, for each fixed $\nu$ and $v$, the number of PGMRES iterations is bounded from above by a constant independent of $n$.
	This shows an optimal PGMRES convergence rate and, consequently, the efficiency of the BJ/BGS/AS/MS preconditioners for $n^{-1}B_{n,p,\alpha,\beta}$.
	\item[(E)] In Figures~\ref{BJ-figure}--\ref{MS-figure}, we plot the real parts of the eigenvalues of the preconditioned matrices
	\begin{alignat*}{3}
	&(P^{\hspace{1pt}\mbox{\scriptsize\ding{105}}}_{n,p,\alpha,\beta,\nu})^{-1}(n^{-1}B_{n,p,\alpha,\beta}),&\qquad\mbox{\ding{105}}&\in\{BJ,BGS\},\\[3pt]
	&(P^{\hspace{1pt}\mbox{\scriptsize\ding{105}}}_{n,p,\alpha,\beta,v})^{-1}(n^{-1}B_{n,p,\alpha,\beta}),&\qquad\mbox{\ding{105}}&\in\{AS,MS\},
	\end{alignat*}
	in the case where $p\in\{2,3,4\}$, $n=500$, $\alpha=\frac25$, $\beta=\frac35$, $\nu=4$, $v=2$. The real parts of the eigenvalues are sorted in ascending order and positioned at $i/(N_{n,p}-1)$ for $i=0,\ldots,N_{n,p}-1$. The imaginary parts of the eigenvalues are not shown because they are all very close to zero, with their maximal absolute value being smaller than $3.25\cdot10^{-5}$. We see from Figures~\ref{BJ-figure}--\ref{MS-figure} that, up to a small number of outliers, the eigenvalues of the preconditioned matrices are clustered at $1$, as expected based on Remark~\ref{P-efficiency}.

\end{enumerate}
\begin{figure}
\centering
\begin{subfigure}{0.325\textwidth}
\includegraphics[width=\textwidth]{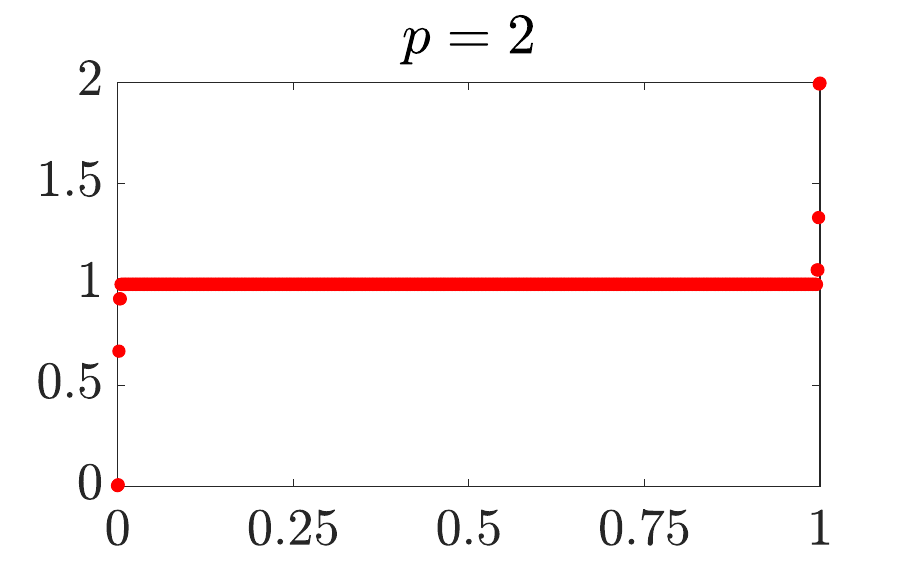}
\end{subfigure}
\begin{subfigure}{0.325\textwidth}
\includegraphics[width=\textwidth]{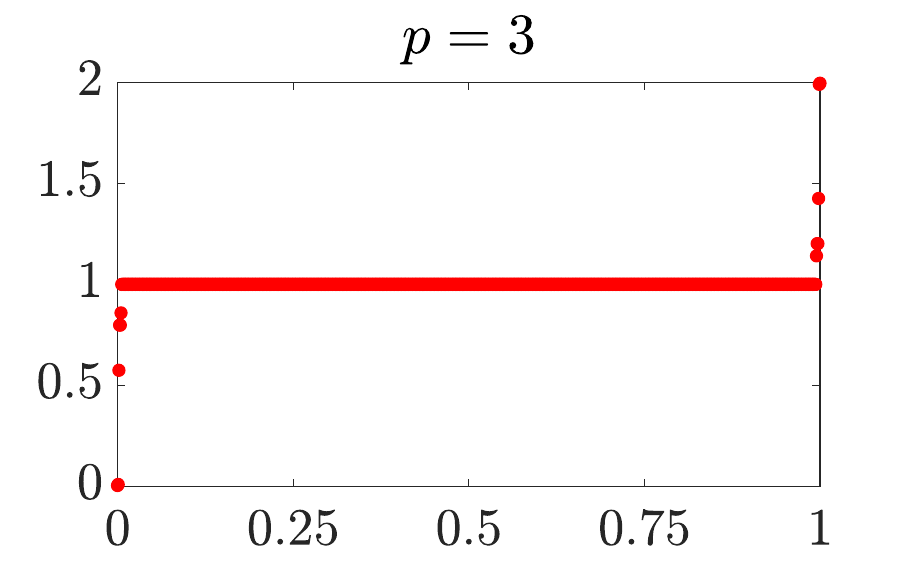}
\end{subfigure}
\begin{subfigure}{0.325\textwidth}
\includegraphics[width=\textwidth]{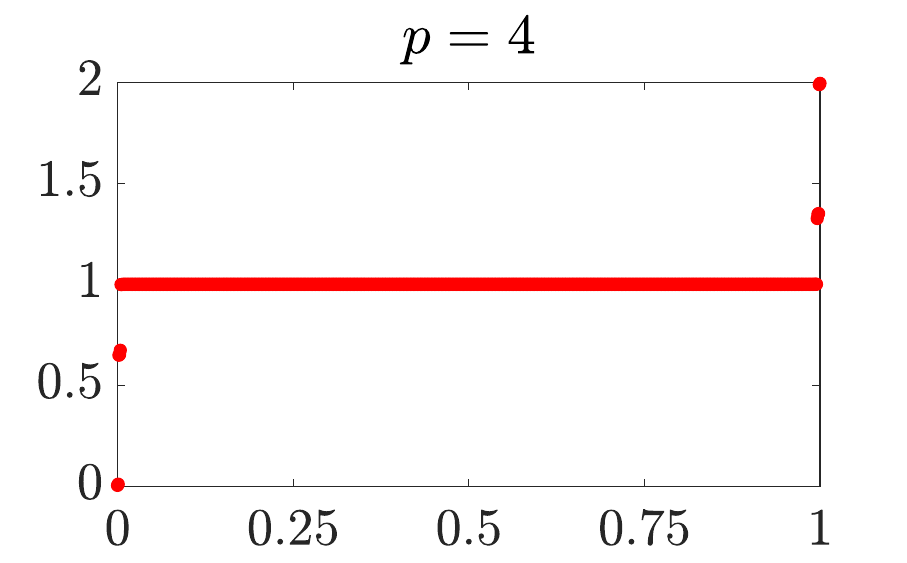}
\end{subfigure}
\caption{Real parts of the eigenvalues of the preconditioned matrix $(P^{BJ}_{n,p,\alpha,\beta,\nu})^{-1}(n^{-1}B_{n,p,\alpha,\beta})$ for $p\in\{2,3,4\}$, $n=500$, $\alpha=\frac25$, $\beta=\frac35$, $\nu=4$.}
\label{BJ-figure}
\vspace{20pt}
\begin{subfigure}{0.325\textwidth}
\includegraphics[width=\textwidth]{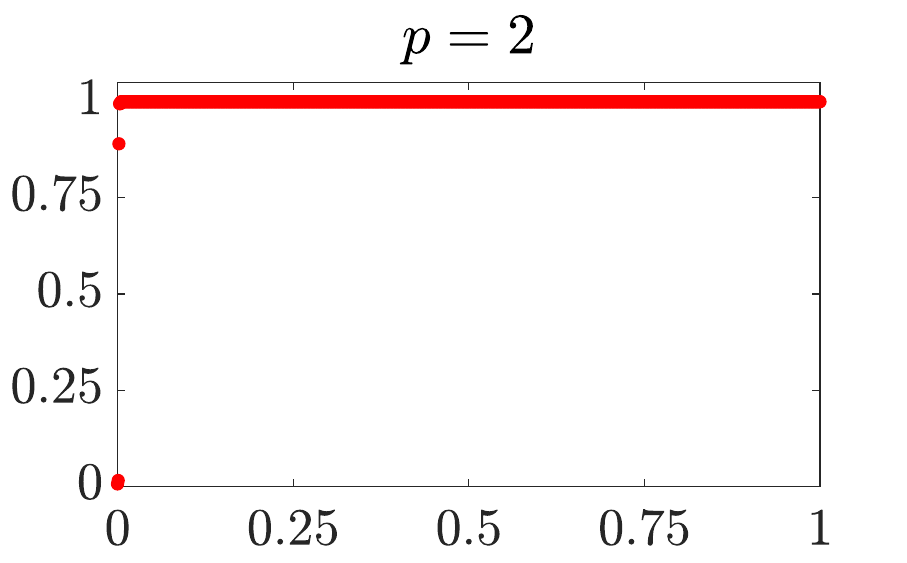}
\end{subfigure}
\begin{subfigure}{0.325\textwidth}
\includegraphics[width=\textwidth]{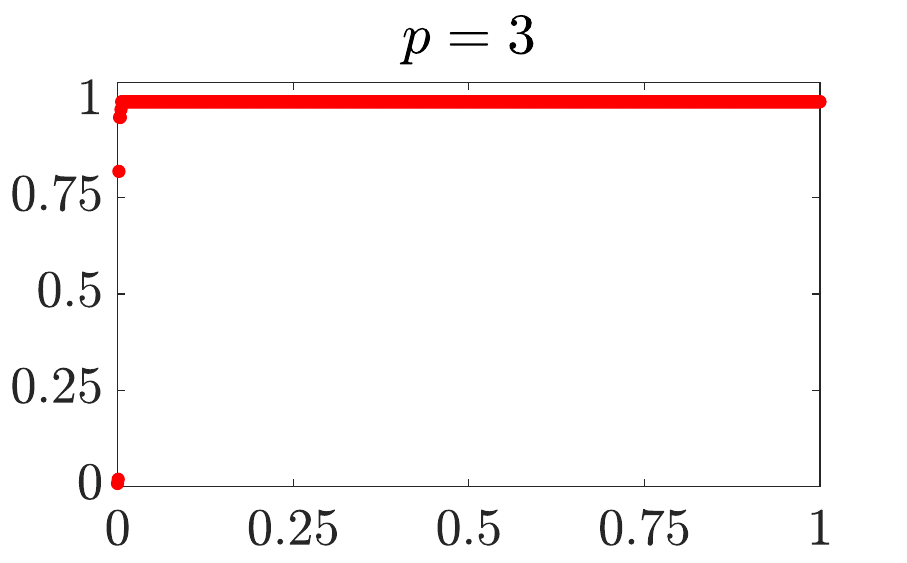}
\end{subfigure}
\begin{subfigure}{0.325\textwidth}
\includegraphics[width=\textwidth]{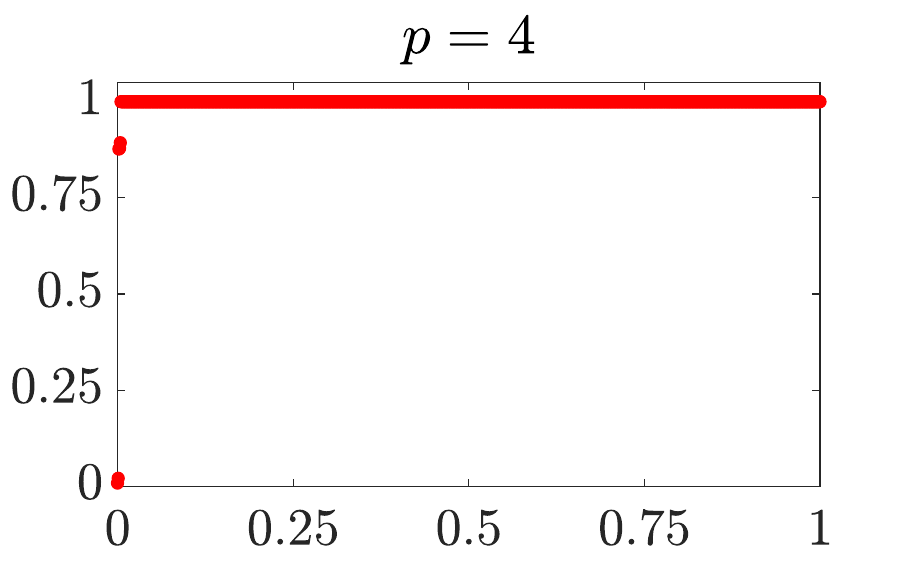}
\end{subfigure}
\caption{Real parts of the eigenvalues of the preconditioned matrix $(P^{BGS}_{n,p,\alpha,\beta,\nu})^{-1}(n^{-1}B_{n,p,\alpha,\beta})$ for $p\in\{2,3,4\}$, $n=500$, $\alpha=\frac25$, $\beta=\frac35$, $\nu=4$.}
\label{BGS-figure}
\vspace{20pt}
\begin{subfigure}{0.325\textwidth}
\includegraphics[width=\textwidth]{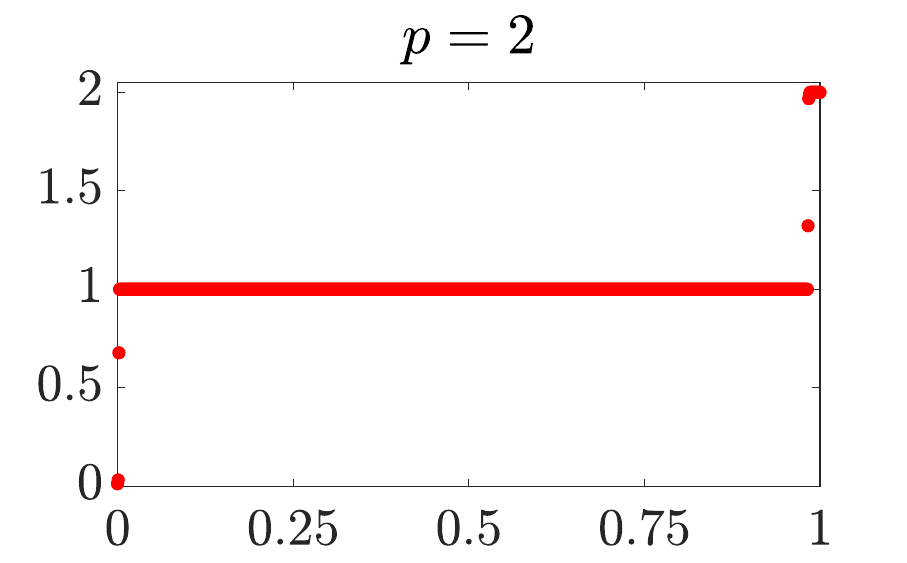}
\end{subfigure}
\begin{subfigure}{0.325\textwidth}
\includegraphics[width=\textwidth]{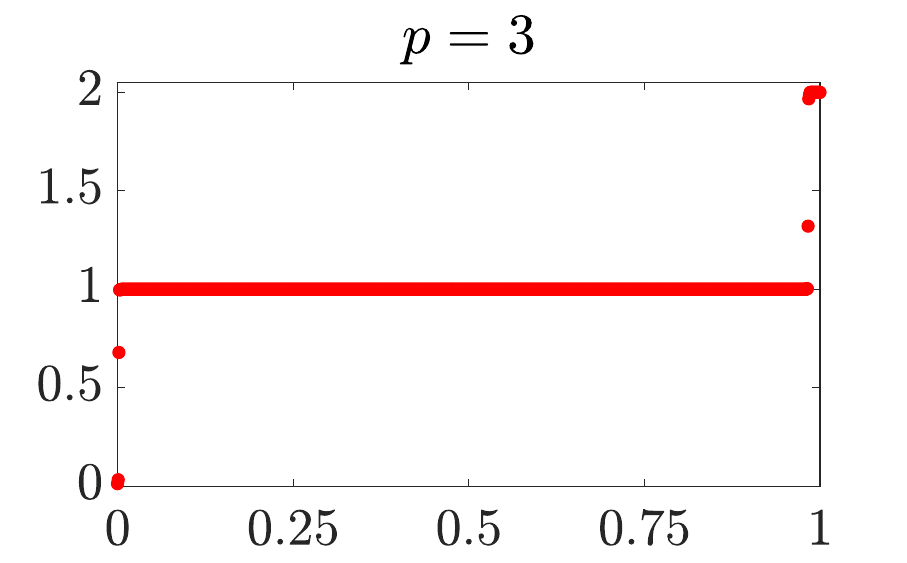}
\end{subfigure}
\begin{subfigure}{0.325\textwidth}
\includegraphics[width=\textwidth]{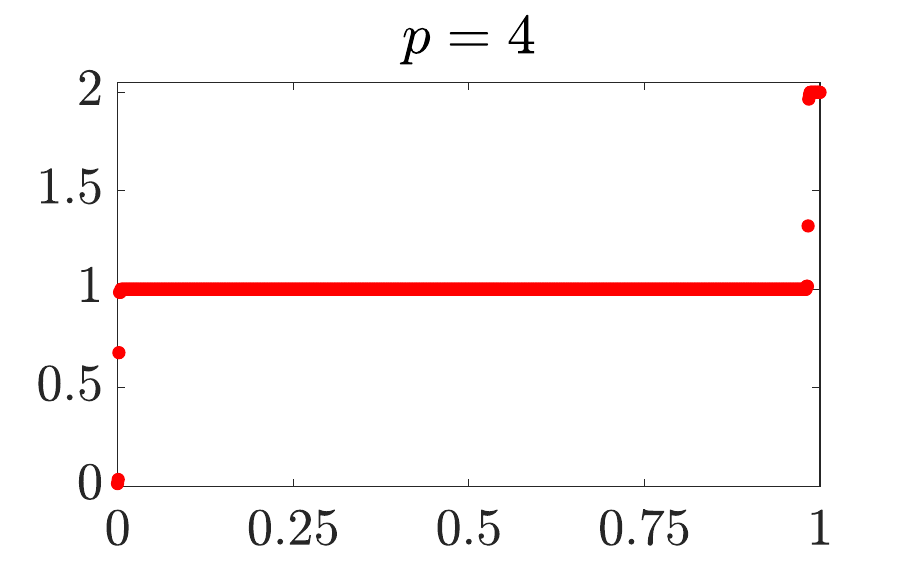}
\end{subfigure}
\caption{Real parts of the eigenvalues of the preconditioned matrix $(P^{AS}_{n,p,\alpha,\beta,v})^{-1}(n^{-1}B_{n,p,\alpha,\beta})$ for $p\in\{2,3,4\}$, $n=500$, $\alpha=\frac25$, $\beta=\frac35$, $v=2$.}
\label{AS-figure}
\vspace{20pt}
\begin{subfigure}{0.325\textwidth}
\includegraphics[width=\textwidth]{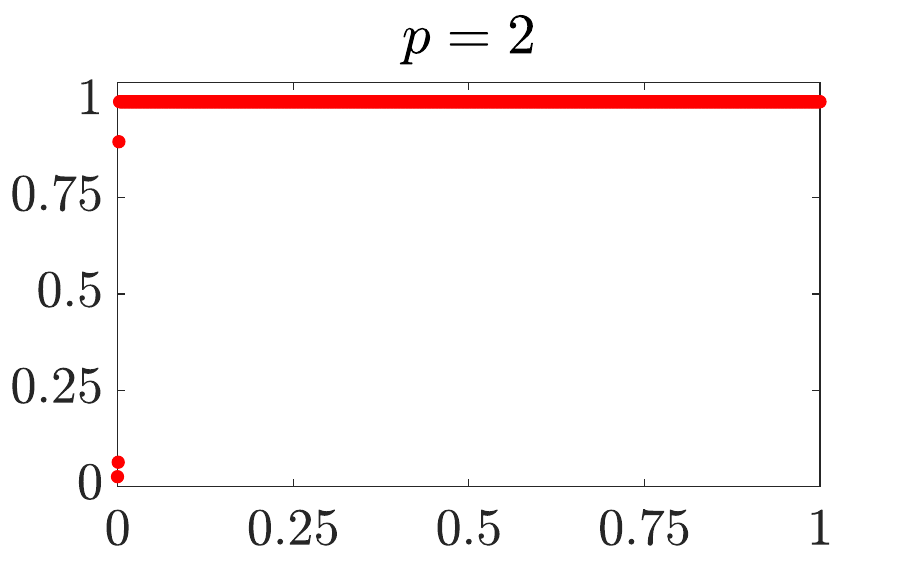}
\end{subfigure}
\begin{subfigure}{0.325\textwidth}
\includegraphics[width=\textwidth]{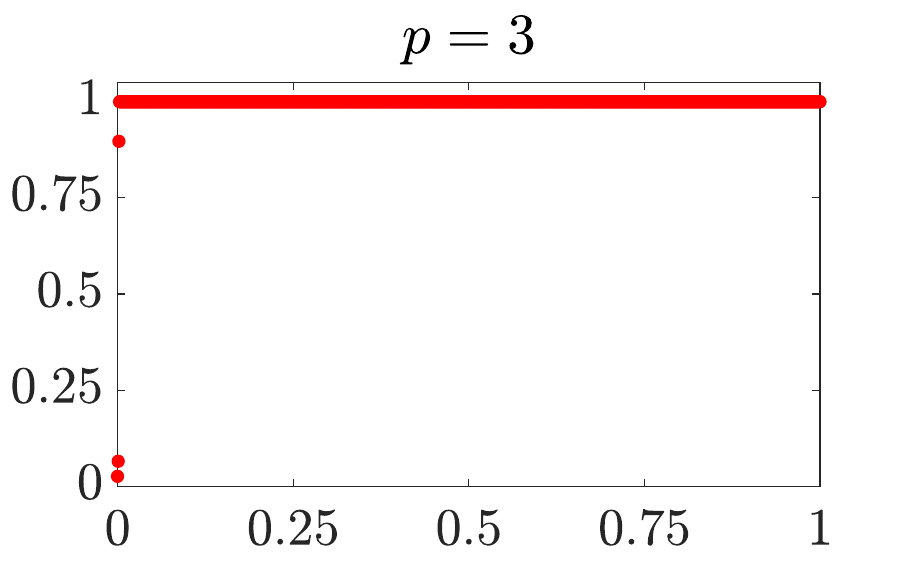}
\end{subfigure}
\begin{subfigure}{0.325\textwidth}
\includegraphics[width=\textwidth]{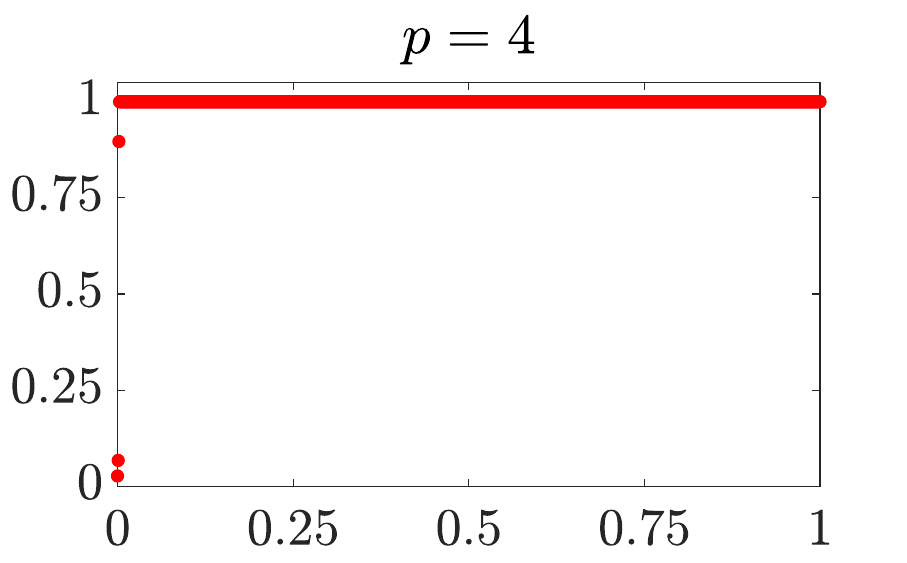}
\end{subfigure}
\caption{Real parts of the eigenvalues of the preconditioned matrix $(P^{MS}_{n,p,\alpha,\beta,v})^{-1}(n^{-1}B_{n,p,\alpha,\beta})$ for $p\in\{2,3,4\}$, $n=500$, $\alpha=\frac25$, $\beta=\frac35$, $v=2$.}
\label{MS-figure}
\end{figure}

\end{example}

\section{Conclusions and perspectives}\label{sec:c}
We have shown through examples that the matrices arising from a DDM discretization often give rise to a GLT sequence, thus confirming for DDMs what is also observed for other classical discretization methods such as finite differences, finite elements, isogeometric analysis, etc.
Subsequently, we have formally defined, for an arbitrary multilevel block matrix, the BJ/BGS/AS/MS preconditioners commonly employed in the context of DDMs.
Our definitions, as well as the associated notations, were inspired by the theory of GLT sequences and are proposed as alternatives to those commonly used by the DDM community.
In our main results (Theorems~\ref{blockdiag(GLT)=GLT_blocktril(GLT)=GLT} and~\ref{AS(GLT)=GLT_MS(GLT)=GLT}), we have shown that any sequence of BJ/BGS/MS preconditioners associated with a GLT sequence $\{A_n\}_n\sim_{\rm GLT}\kappa$ is again a GLT sequence with the same symbol $\kappa$; and any sequence of AS preconditioners associated with a GLT sequence $\{A_n\}_n\sim_{\rm GLT}\kappa$ is a GLT sequence with symbol $\kappa^{AS}\approx\kappa$ such that $\kappa^{AS}=\kappa$ if and only if the overlaps in the subdomains used for the construction of the considered AS preconditioners vanish as $n\to\infty$.
Based on Theorems~\ref{blockdiag(GLT)=GLT_blocktril(GLT)=GLT} and~\ref{AS(GLT)=GLT_MS(GLT)=GLT}, we have predicted in Remark~\ref{P-efficiency} that the BJ/BGS/AS/MS preconditioners for matrices $A_n$ belonging to a GLT sequence are efficient. A numerical validation of this prediction in the context of isogeometric DDMs has been presented in Example~\ref{exa:num}.

We conclude this paper by suggesting some possible future lines of research.
\begin{enumerate}[nolistsep,leftmargin=*]

\item The definition and GLT analysis of the BJ/BGS/AS/MS preconditioners presented in this paper apply to multilevel block matrices only.
This represents a non-negligible limitation.
By definition, the size of a multilevel block matrix is constrained to be of the form $N(\nn)s\times N(\nn)t$ for some multi-index $\nn$ and some positive integers $s,t$.
As highlighted in Remark~\ref{mbm-rectangular}, multilevel block matrices usually arise from the discretization of multidimensional differential problems defined over rectangular domains. When the domain is not rectangular, the multilevel block structure is normally lost as well as the ``classical'' GLT structure. For non-rectangular domains, the resulting discretization matrices often show a ``reduced'' GLT structure 
\cite{rg}. An interesting subject for future investigation consists in extending the definition and GLT analysis of the BJ/BGS/AS/MS preconditioners to such matrices. 
This extension 
is supposed to heavily rely on the theory of reduced GLT sequences, which was originally introduced in \cite[pp.~398--399]{glt_1} and \cite[Section~3.1.4]{glt_2}, was systematically developed in \cite{rg}, and was actually an important source of inspiration for this paper too.

\item This paper focused on the GLT analysis of the BJ/BGS/AS/MS preconditioners only.
Two further widely used preconditioners in the context of DDMs are the restricted additive Schwarz (RAS) and restricted multiplicative Schwarz (RMS) preconditioners \cite{Cai-pioneering,EG,FS2001,NS2002}.
More recently, optimized additive Schwarz (OAS) and optimized multiplicative Schwarz (OMS) methods (and preconditioners) have been proposed to the DDM community \cite{Gan06,GLS12}.
Generalizing the GLT analysis of this paper to the RAS/RMS/OAS/OMS preconditioners is a natural direction of future research.

\item A setting closely related to the DDM framework considered in this paper 
is that of domain truncation, where an unbounded domain is replaced by a bounded computational domain via artificial boundary conditions, such as absorbing boundary conditions (ABCs) or perfectly matched layers (PMLs).
Indeed, when a domain is decomposed into subdomains solved independently, each subdomain problem can be viewed as posed on a truncated domain whose artificial boundary is the interface with its neighbors. Techniques such as ABCs and PMLs 
naturally motivate transmission conditions for Schwarz methods and, as shown in~\cite{gander_jakabcin_outrata}, after eliminating the artificial-region unknowns, they yield a Schur complement whose form depends on the chosen boundary conditions.
Performing a GLT analysis of the preconditioners arising from ABC/PML-based domain truncation 
is proposed as a topic for future research.
\end{enumerate}

\section*{Acknowledgements}

{\footnotesize
The first and third authors are members of the research group GNCS (Gruppo Nazionale per il Calcolo Scientifico) of INdAM (Istituto Nazionale di Alta Matematica).
This work was supported
by the INdAM-GNCS project DETECTED (aDaptive and lEarning-assisTed mEthods for numeriCal inTEgration and Data, CUP E53C25002010001),
by the MUR excellence department project MatMod@TOV awarded to the Department of Mathematics of the University of Rome Tor Vergata (CUP E83C23000330006),
by the Department of Mathematics of the University of Rome Tor Vergata through the project METRO (Methods and modEls for arTificial neuRal netwOrks, CUP E83C25000630005),
and by the PRIN-PNRR project MATHPROCULT (MATHematical tools for predictive maintenance and PROtection of CULTural heritage, Code P20228HZWR, CUP J53D23003780006).
}


\begin{thebibliography}{99}

\footnotesize

\bibitem{blocking}
{\sc Adriani A., Furci I., Garoni C., Serra-Capizzano S.}
{\em Spectral and singular value distribution of sequences of block matrices with rectangular Toeplitz blocks. Part I: Asymptotically rational block size ratios.}
J. Numer. Math. (in press) https://doi.org/10.1515/jnma-2025-0091.

\bibitem{rg}
{\sc Barbarino G.}
{\em A systematic approach to reduced GLT.}
BIT Numer. Math. 62 (2022) 681--743.

\bibitem{bgR}
{\sc Barbarino G., Garoni C., Mazza M., Serra-Capizzano S.}
{\em Rectangular GLT sequences.}
Electron. Trans. Numer. Anal. 55 (2022) 585--617.

\bibitem{bg}
{\sc Barbarino G., Garoni C., Serra-Capizzano S.}
{\em Block generalized locally Toeplitz sequences: theory and applications in the unidimensional case.}
Electron. Trans. Numer. Anal. 53 (2020) 28--112.

\bibitem{bgd}
{\sc Barbarino G., Garoni C., Serra-Capizzano S.}
{\em Block generalized locally Toeplitz sequences: theory and applications in the multidimensional case.}
Electron. Trans. Numer. Anal. 53 (2020) 113--216.


\bibitem{Ber94}
{\sc B\'erenger J.-P.}
{\em A perfectly matched layer for the absorption of electromagnetic waves.}
J. Comput. Phys. 114 (1994) 185--200.

\bibitem{Bini}
{\sc Bini D. A., Capovani M., Menchi O.}
{\em Metodi Numerici per l'Algebra Lineare.}
Zanichelli, Bologna (1988).

\bibitem{Brezis}
{\sc Brezis H.}
{\em Functional Analysis, Sobolev Spaces and Partial Differential Equations.}
Springer, New York (2011).

\bibitem{Cai-pioneering}
{\sc Cai X.-C., Sarkis M.}
{\em A restricted additive Schwarz preconditioner for general sparse linear systems.}
SIAM J. Sci. Comput. 21 (1999) 792--797.

\bibitem{CM94}
{\sc Chan T. F., Mathew T. P.}
{\em Domain decomposition algorithms.}
Acta Numer. 3 (1994) 61--143.

\bibitem{DDM-book-New}
{\sc Dolean V., Jolivet P., Nataf F.}
{\em An Introduction to Domain Decomposition Methods: Algorithms, Theory, and Parallel Implementation.}
SIAM, Philadelphia (2015).


\bibitem{EG}
{\sc Efstathiou E., Gander M. J.}
{\em Why restricted additive Schwarz converges faster than additive Schwarz.}
BIT Numer. Math. 43 (2003) 945--959.

\bibitem{EM77}
{\sc Engquist B., Majda A.}
{\em Absorbing boundary conditions for numerical simulation of waves.}
Proc. Natl. Acad. Sci. U.S.A. 74 (1977) 1765--1766.



\bibitem{FS2001}
{\sc Frommer A., Szyld D. B.}
{\em An algebraic convergence theory for restricted additive Schwarz methods using weighted max norms.}
SIAM J. Numer. Anal. 39 (2001) 463--479.

\bibitem{Gan06}
{\sc Gander M. J.}
{\em Optimized Schwarz methods.}
SIAM J. Numer. Anal. 44 (2006) 699--731.

\bibitem{Gan08}
{\sc Gander M. J.}
{\em Schwarz methods over the course of time.}
Electron. Trans. Numer. Anal. 31 (2008) 228--255.

\bibitem{gander_jakabcin_outrata}
{\sc Gander M. J., Jakabcin L., Outrata M.}
{\em Domain truncation, absorbing boundary conditions, Schur complements, and Pad\'e approximation.}
Electron. Trans. Numer. Anal. 59 (2024) 319--341.

\bibitem{GLS12}
{\sc Gander M. J., Loisel S., Szyld D. B.}
{\em An optimal block iterative method and preconditioner for banded matrices with applications to PDEs on irregular domains.}
SIAM J. Matrix Anal. Appl. 33 (2012) 653--680.


\bibitem{GLT-intro}
{\sc Garoni C.}
{\em Introduction to the theory of generalized locally Toeplitz sequences and its applications.}
Math. Notes 119 (2026) 832--873.

\bibitem{GLTbookI}
{\sc Garoni C., Serra-Capizzano S.}
{\em Generalized Locally Toeplitz Sequences: Theory and Applications (Volume I).}
Springer, Cham (2017).

\bibitem{GLTbookII}
{\sc Garoni C., Serra-Capizzano S.}
{\em Generalized Locally Toeplitz Sequences: Theory and Applications (Volume II).}
Springer, Cham (2018).

\bibitem{aip}
{\sc Garoni C., Serra-Capizzano S.}
{\em Multilevel generalized locally Toeplitz sequences: an overview and an example of application.}
AIP Conf. Proc. 2116 (2019) 020003.

\bibitem{GLH}
{\sc Garoni C., Serra-Capizzano S.}
{\em Block Jacobi/Gauss--Seidel preconditioning for GLT sequences, and GLH sequences.}
arXiv:2606.01888.

\bibitem{GV}
{\sc Golub G. H., Van Loan C. F.}
{\em Matrix Computations.}
4th Edition, The Johns Hopkins University Press, Baltimore (2013).


\bibitem{Lio88I}
{\sc Lions P.-L.}
{\em On the Schwarz alternating method. I.}
In ``First International Symposium on Domain Decomposition Methods for Partial Differential Equations'', 
SIAM, Philadelphia (1988) 1--42.

\bibitem{Lio88II}
{\sc Lions P.-L.}
{\em On the Schwarz alternating method. II: Stochastic interpretation and order properties.}
In ``Domain Decomposition Methods'', 
SIAM, Philadelphia (1989) 47--70.

\bibitem{Lio88III}
{\sc Lions P.-L.}
{\em On the Schwarz alternating method. III: A variant for nonoverlapping subdomains.}
In ``Third International Symposium on Domain Decomposition Methods for Partial Differential Equations'', 
SIAM, Philadelphia (1990) 202--223.

\bibitem{LMS}
{\sc Lyche T., Manni C., Speleers S.}
{\em Foundations of spline theory: B-splines, spline approximation, and hierarchical refinement.}
In ``Splines and PDEs: From Approximation Theory to Numerical Linear Algebra'', 
Lect. Notes Math.~2219, Springer, Cham (2018) 1--76.


\bibitem{NS2002}
{\sc Nabben R., Szyld D. B.}
{\em Convergence theory of restricted multiplicative Schwarz methods.}
SIAM J. Numer. Anal. 40 (2003) 2318--2336.



\bibitem{DDM-GLT-1d}
{\sc Rifqui A., Ratnani A., Serra-Capizzano S.}
{\em Block Schwarz methods and preconditioning strategies using generalized locally Toeplitz tools -- Part I: Analysis of the preconditioners and numerical validation.}
Submitted.

\bibitem{Rudinone}
{\sc Rudin W.}
{\em Real and Complex Analysis.}
3rd Edition, McGraw-Hill, Singapore (1987).

\bibitem{Saad}
{\sc Saad Y.}
{\em Iterative Methods for Sparse Linear Systems.}
2nd Edition, SIAM, Philadelphia (2003).

\bibitem{Sch1870}
{\sc Schwarz H. A.}
{\em Ueber einen Grenz\"ubergang durch alternirendes Verfahren}
(1870).

\bibitem{glt_1}
{\sc Serra-Capizzano S.}
{\em Generalized locally Toeplitz sequences: spectral analysis and applications to discretized partial differential equations.}
Linear Algebra Appl. 366 (2003) 371--402.

\bibitem{glt_2}
{\sc Serra-Capizzano S.}
{\em The GLT class as a generalized Fourier analysis and applications.}
Linear Algebra Appl. 419 (2006) 180--233.




\bibitem{DDM-book}
{\sc Toselli A., Widlund O.}
{\em Domain Decomposition Methods -- Algorithms and Theory.}
Springer, Berlin, Heidelberg (2005).

\bibitem{Ty96}
{\sc Tyrtyshnikov E. E.}
{\em A unifying approach to some old and new theorems on distribution and clustering.}
Linear Algebra Appl. 232 (1996) 1--43.

\end{thebibliography}
\end{document}